\newtheorem{theorem}{Theorem}
\newtheorem{lemma}[theorem]{Lemma}
\newtheorem{proposition}[theorem]{Proposition}
\newtheorem{hypothesis}{Hypothesis}
\theoremstyle{definition}
\newtheorem{definition}[theorem]{Definition}
\theoremstyle{remark}
\newtheorem{remark}{Remark}
\newtheorem{example}[theorem]{Example}
\newcommand{\E}{\mathrm{E}}
\newcommand{\Tr}{\mathrm{Tr}}
\newcommand{\ab}{\allowbreak}
\let\phi=\varphi
\newcounter{jmpnumber}\setcounter{jmpnumber}{1}
\newcounter{int}\setcounter{int}{1}
\newcommand{\listcomments}{%
\smallskip\hrule\smallskip\noindent 
\@whilenum\value{int}<\thejmpnumber\do
{\tiny Comm. \theint{} is on page \pageref{jmp\theint}\stepcounter{int}, }
\smallskip\hrule\smallskip}
\newcommand{\etc}{, \dots,}
\newcommand\eq{\begin{eqnarray*}}
\newcommand\qe{\end{eqnarray*}}
\newcommand\eqa{\begin{eqnarray}}
\newcommand\qea{\end{eqnarray}}
\newcommand\DT{{\mcal{DT}}}
\newcommand\DUFT{{\mcal{DU}}\&{\mcal{FT}}}
\def\mbf{\bm}
\def\mcal{\mathcal}
\def\mbb{\mathbb}
\def\mrm{\mathrm}
\def\limN{\underset{N \rightarrow \infty}\longrightarrow}
\def\Nlim{\underset{N \rightarrow \infty}\lim}
\def\ybox{\dimen0=\hsize \dimen1=30pt%
\divide\dimen0 by \dimen1
\count100=\dimen0 \dimen0=\hsize
\divide\dimen0 by \count100%
\dimen1=\dimen0\divide\dimen1 by 2
{\parindent0pt\hbox to
\hsize{\cleaders\vbox{\hsize\dimen0
\hrule\vrule  height1.2pt%
\advance\dimen1 by -0.8pt\hskip
\dimen1\advance\dimen1 by 1.2pt \vrule
width\dimen1\hrule}\hfill}}}
\newcommand{\thebottomline}{%
\renewcommand{\thefootnote}{}
\renewcommand{\footnoterule}{}
\phantom{M}\footnotetext{\hfill\tiny\textit{\noindent\romannumeral\day.\romannumeral\month.\romannumeral\year}}}
\title[{Joint Global Fluctuations}] {Joint Global Fluctuations of complex Wigner and deterministic Matrices}
\author[male]{Camile Male}
\address{CNRS and 
Institut de Math\'ematiques
de Bordeaux, Universit\'e de Bordeaux,
33400 Talence,  France}
\email{camille.male@math.u-bordeaux.fr}
\author[mingo]{James A. Mingo$^{(*)}$} 
\address{Department
  of Mathematics and Statistics, Queen's University, Jeffery
  Hall, Kingston, Ontario, K7L 3N6, Canada}
\email{mingo@mast.queensu.ca}
\author[peche]{Sandrine P\'ech\'e$^{(**)}$} \address{Laboratoire de Probabilit{\'e}s et Mod{\`e}les Al{\'e}atoires, Universit{\'e} de Paris, Paris, France}
\email{peche@math.univ-paris-diderot.fr }
\author[speicher]{Roland Speicher$^{(***)}$} \address{Saarland University, 
Department of Mathematics, 
66041 Saar\-br\"ucken, 
Germany}
\email{speicher@math.uni-sb.de} 
\thanks{$^*$ Research supported by a Discovery Grant from
  the Natural Sciences and Engineering Research Council of
  Canada}
\thanks{$^{**}$ Research was supported by the Institut Universitaire de France.}
\thanks{$^{***}$ Research supported by the SFB-TRR 195}
\begin{document}

\begin{abstract}We characterize the limiting fluctuations of traces of several independent Wigner matrices and deterministic matrices under mild conditions. A CLT holds but in general the families are not asymptotically free of second order and the limiting covariance depends on more information on the deterministic matrices than their limiting $^*$-distribution.
\end{abstract}

\maketitle
\section{Introduction}

A Wigner matrix is a $N \times N$ Hermitian random matrix
of the form $X_N = \frac{1}{\sqrt N}(x_{ij})$, where:
\begin{itemize}
\item
the sub-diagonal entries $(x_{ij})_{i\leq j}$ are centered
independent complex random variables, (which can also be real-valued, if their imaginary part vanishes identically),
\item
the diagonal entries are identically distributed real random variables, 
\item the distribution of $x_{ij}$ does not depend on $N$ and it has bounded 
moments of all orders ($\E(|x_{ij}|^k) < \infty$ for all $i, j, k$).
\end{itemize}

Thanks to the seminal work of Wigner \cite{Wig51}, we know that the empirical 
spectral distribution of a Wigner matrix converges in moments to the semicircular distribution, 
namely
		$$\E\Big[ \frac 1 N \Tr X_N^k \Big] \limN \int_{-2\sigma}^{2\sigma} t^k \frac{\sqrt{4\sigma^2-t^2}}{2\pi\sigma^2 } \mrm dt, \ \forall k \geq 1,$$
where $\sigma^2 =\E[|x_{i,j}|^2]$ for $i\neq j$. The convergence holds also almost surely and in probability. It also holds in weak-$*$ topology i.e. when integrating the spectral measure with respect to bounded continuous functions instead of polynomials, and in this situation the finite moments condition can be reduced to the existence of a finite second moment. 
In the multivariate setting this was 
generalized to the result that independent Wigner matrices and 
deterministic matrices are asymptotically free, in the sense of Voiculescu's free probability theory. More precisely, if $\mbf X$ 
denotes a collection of independent Wigner matrices and $\mbf A$ a collection of deterministic 
matrices, then we have for any $^*$-polynomial $p$
		$$\E\Big[ \frac 1 N \Tr \, p (\mbf X,\mbf A) \Big] \limN \Phi \big[p(\mbf x, \mbf a)\big],$$
where $\mbf x$ denotes a free semicircular system,
$\mbf a$ is distributed according to the limiting $^*$-distribution of $\mbf A$,
and $\mbf x$ and $\mbf a$ are free. The case of {\sc gue} matrices and general deterministic matrices goes back to Voiculescu \cite{Voi91,Voi98}, Dykema \cite{Dyk93} considered general Wigner matrices, but special block-diagonal deterministic matrices; the proof of the general case can be found in \cite{AGZ,MSp}.

The study of fluctuations of linear statistics around their expectation 
was initiated by Jonsson \cite{Jon82} for the slightly different model of Wishart matrices, and 
then by Khorunzhy, Khoruzhenko and Pastur \cite{KKP96} for a Wigner matrix. Since then, many more results were produced on the fluctuation of linear spectral 
statistics of a single random matrix \cite{SS98,Joh98, CLT1,CLT2, CLT3, CLT4}. Our concern in this paper is about the fluctuations 
of linear statistics in several matrices. This direction of research was initiated 
by two of the present authors \cite{MS06}, with the study of centered traces of 
$^*$-polynomials in independent complex Gaussian and Wishart matrices, 
and has been developed further in \cite{Red11, JL20, DF19}. It turns out that in all these situations, 
one can prove a central limit theorem (CLT) for the centered traces, namely
	\eq
		   \Tr \, p(\mbf X, \mbf A) - \E\big[   \Tr \, p(\mbf X, \mbf A) \big]
	\qe
converges to a Gaussian random variable, and the convergence holds jointly 
for all $^*$-polynomials $p$. The absence of normalization is a 
first remarkable common fact in all these CLT, which tells that the eigenvalues of 
random matrices fluctuate much less than i.i.d. random variables. 

Another aspect that appears 
in the fluctuation problem, is a breaking of universality compared to the first 
order problem:
\begin{itemize}
	\item the limiting fluctuations of linear spectral statistics of a Wigner matrix 
	depend on the pseudo-variance $\E(x_{ij}^2)$, on the diagonal variance $\E(x_{ii}^2)$, and on the fourth moment 
	$\E[ x_{ij}^2 \bar x_{ij}^2]$ of its non diagonal entries.
	\item the asymptotic second order freeness theory of complex random 
	matrices and of real random matrices are different. 
\end{itemize}

In this article, we extend the result in \cite{MS06} by considering the fluctuations of traces of several independent Wigner and deterministic matrices, proving a CLT under mild assumptions. 
It turns out that Wigner matrices and deterministic matrices are, in contrast to  {\sc gue}  and deterministic matrices, \emph{not} free of second order in the sense of
\cite{MS06}. But still there is a very definite structure governing the asymptotic behaviour; however, a crucial observation is that the limiting fluctuations do not depend only on the limiting $^*$-distribution of the deterministic matrices but on more information. This puts this setting very canonically into the frame of {traffic probability theory}, which was developed by one of present authors \cite{Mal20}. One can see the present investigations as a first step for a more general treatment of global fluctuation by \emph{the traffic approach}.

The paper is organized as follows. In Section \ref{Sec:presentation}, we will present our main results.
In the general case, the transpose of the deterministic matrices will also show up in the formula for the asymptotic covariance, thus one needs for the most general version of our results also assumptions on the joint asymptotic distribution of the the deterministic matrices and their transposes. In the special case where the  Wigner matrices have vanishing pseudo-variance the transpose does not play a role; hence we will also have a separate discussion for this case.
In Sections \ref{Sec:Tightness} and \ref{Sec:CharLim} we will provide the proofs of our main theorems.

\section{Presentation of the results}\label{Sec:presentation}

\subsection*{Assumptions on Wigner and deterministic matrices}

Firstly, we list the notations and assumptions on the matrices under consideration.

\begin{hypothesis}\label{hyp:X} Let  $X=\frac 1{\sqrt N}({x_{ij}})_{i,j}$ be a Wigner matrix.
	\begin{enumerate}
		\item We assume $\E(|x_{12}|^2) = 1$. 
		\item We assume $X$ is invariant in law by conjugation by permutation matrices, or equivalently $x_{12} \overset{\mcal Law}= \bar x_{12}$.
	\end{enumerate}

\end{hypothesis}

The first condition is a normalization condition, while the second one is 
technical and inherent to our proof. We hope that eventually we can get rid of this last one by 
improving the first steps of our method.

\begin{definition}\label{def1}
The triple $( \E(x_{12}^2), \E(x_{11}^2), k_4)$	
is called the \emph{parameters} of a Wigner matrix $X=\frac 1 {\sqrt N} ({x_{ij}})_{i,j}$,  where $k_4$ is the following fourth cumulant of the off-diagonal entry
		$$k_4 = k_4(x_{12},x_{12},\bar x_{12},\bar x_{12})=\E[ x_{12}^2 \bar x_{12}^2] - 2 - |\E[x_{12}^2]|^2.$$ 
We call $\E(x_{12}^2)$ the \emph{pseudo-variance} and $\E(x_{11}^2)$ the \emph{diagonal variance} of the Wigner matrix.
\end{definition}

Note that a {\sc gue} 
matrix has parameters $(0,1,0)$ and a {\sc goe} matrix has parameters $(1,2,0)$. A real Wigner 
matrix has always pseudo-variance equal to one. 

Now we introduce the hypotheses on deterministic 
matrices. These assumptions imply a CLT in the simpler case where the Wigner matrices admit a vanishing pseudo-variance. The general case requires some more assumptions. For two matrices $A$ and $B$, we let $A\circ B$ be the
entry-wise product of the matrices, also called Hadamard or
Schur product.

\begin{hypothesis}\label{hyp:A} The collection $\mbf A=(A_j)_{j\in J}$ of deterministic matrices is assumed to satisfy
	\begin{enumerate}
		\item $\sup_{N} \| A_{j}\| < \infty$ for any $j\in J$, for $\| \, \cdot\, \|$ the operator norm.
		\item For any $^*$-polynomials $p$ and $q$, 
			$$\phi_{(\circ)}(p, q):= \phi(p \circ q):= \underset{ N \rightarrow \infty}\lim \frac 1 N \Tr \big[ p(\mbf A) \circ q(\mbf A) \big]$$
		exists as $N$ goes to infinity.
	\end{enumerate}
\end{hypothesis}

\begin{definition}\label{not:parameterA}
Under Hypothesis \ref{hyp:A},
we call $\phi_{(\circ)}$ the \emph{parameter} of $\mbf A$. 
\end{definition}
From Hypothesis \ref{hyp:A}, we have the existence of the limiting $*$-distri\-bution of $\mbf A$: $\phi(p) = \phi(p \circ \mbb I)=\phi(\mbb I \circ p)$,
where $\mbb I$ is the unit *-polynomial.
Also the first assumption in Hypothesis \ref{hyp:A} implies that, up to a subsequence, the second assumption 
is always satisfied, since $\big| \frac 1 N \Tr  [ A \circ B  ] \big| \leq \|A\| \times \|B\|$ 
for any matrices $A,B$. The limit $\phi_{(\circ)}$ will be used to describe 
the covariance of the limiting Gaussian process. 

\subsection*{Statements on convergence}
Omitting momentarily the description 
of this  covariance, our main result can be stated as follows.

Let us first treat the special case where the Wigner matrices have vanishing pseudo-variance.

\begin{theorem}\label{MainTh1} 
Let $\mbf X$ be a collection of independent Wigner matrices, such that each Wigner matrix satisfies Hypothesis \ref{hyp:X}, and let $\mbf A$ be a collection of  deterministic matrices satisfying Hypothesis \ref{hyp:A}. In addition assume that 
all the Wigner matrices of $\mbf X$  have vanishing pseudo-variance. For any $^*$-polynomial $p$, 
we denote 
	$$Z_N(p) = \Tr \, p(\mbf X, \mbf A) - \E\big[   \Tr \, p(\mbf X, \mbf A) \big].$$
Then the process $\big(Z_N(p)\big)_{p}$ converges to a Gaussian process 
$\big(z(p)\big)_{p}$. The second order $^*$-distribution
	$$\phi^{(2)}: (p,q) \mapsto   \E\big[ z(p) z(q)\big]$$
depends only on the parameters of the matrices as given in Definitions \ref{def1} and \ref{not:parameterA}.
\end{theorem}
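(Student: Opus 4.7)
The plan is to prove the theorem by the classical method of cumulants. Writing $Z_N(p)$ as a centered sum over closed words in Wigner entries and deterministic matrix entries, the statement reduces to two assertions: (a) for every $r \geq 3$ and all monomials $p^{(1)}, \ldots, p^{(r)}$, the joint classical cumulant $\kappa_r(Z_N(p^{(1)}), \ldots, Z_N(p^{(r)}))$ tends to $0$; and (b) the covariance $\E[Z_N(p) Z_N(q)]$ converges to a limit $\phi^{(2)}(p, q)$ depending only on the parameters of the matrices as claimed. Combined with tightness of the family $(Z_N(p))_p$, which is the purpose of Section \ref{Sec:Tightness}, these two facts yield convergence to a Gaussian process.

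First I would expand each $\Tr\, p(\mbf X, \mbf A)$ as a sum over cyclic sequences of indices, alternating Wigner entries $x_{ij}/\sqrt N$ with entries of the $A_j$'s. In the joint cumulant $\kappa_r$, multilinearity of classical cumulants together with independence of the Wigner entries forces the Wigner edges appearing across the $r$ traces to be identified according to some partition $\pi$ whose blocks all have size $\geq 2$, and which must in addition cross-connect all $r$ trace cycles. Each surviving diagram then carries a weight obtained from (i) the joint cumulant of the Wigner entries attached to each block of $\pi$ and (ii) a trace of a product, along the resulting quotient graph, of entries of the $A_j$'s. The normalization $N^{-|E(\pi)|/2}$ combined with the count of free indices (i.e.\ faces of the resulting map) determines the order in $N$ of each contribution.

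For assertion (a) I would argue topologically: any diagram genuinely connecting $r \geq 3$ trace cycles contains strictly more independent cycles than needed for connectivity, and hence loses at least one factor of $N$ compared to the planar annular situation; moreover the vanishing pseudo-variance kills all ``same direction'' pair blocks of the form $(i,j)(i,j)$, which would otherwise have brought in transposes of $\mbf A$. For assertion (b), I would classify the surviving diagrams contributing to $\E[Z_N(p) Z_N(q)]$ into three families: off-diagonal pair blocks $(i,j)(j,i)$, giving the annular non-crossing contribution built from the limiting $^*$-distribution $\phi$; diagonal pair blocks $(i,i)(i,i)$, producing a correction proportional to $\E(x_{11}^2) - 1$ that still depends only on $\phi$; and size-four blocks, contributing a factor $k_4$ while forcing four off-diagonal indices to coincide pairwise, so that the resulting weight reduces to a Hadamard product $\frac{1}{N}\Tr(P \circ Q)$ of words in $\mbf A$, which converges by Hypothesis \ref{hyp:A} to $\phi_{(\circ)}(P, Q)$. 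Summing over all such diagrams produces a bilinear form in $p, q$ expressed purely in terms of $\phi$, $\E(x_{11}^2)$, $k_4$ and $\phi_{(\circ)}$.

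The main obstacle I foresee is the bookkeeping of step (b), in particular identifying exactly how the Hadamard product emerges from $k_4$-blocks when the two trace cycles carry arbitrary words in $\mbf X$ and $\mbf A$: a size-four block glues two pairs of strands across the two trace cycles, splits each cycle into two arcs, and the weight along the ``cut'' is literally a Hadamard product, which is why $\phi_{(\circ)}$ (rather than merely $\phi$) is necessary to describe the limit. Organizing this cleanly, ideally in the language of traffic probability, is where most of the combinatorial work lies; the higher-cumulant estimates of step (a) and the tightness bound are, by comparison, standard applications of the topological expansion for Wigner matrices.
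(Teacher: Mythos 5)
Your proposal is broadly the same strategy as the paper's, up to a change of bookkeeping: where you propose to show that all joint cumulants $\kappa_r$ of order $r\geq 3$ vanish and that the covariance converges, the paper works directly with the joint moments $\tau^{(2)} = \E[Z_1\cdots Z_n]$ and shows (Proposition \ref{Prop:AsypGauss}) that they factor asymptotically according to the Wick formula, which is the same statement. Your classification of the surviving diagrams in part (b) into opposite pair blocks, diagonal pair blocks, and size-four blocks matches the paper's opposite-unicyclic, degenerate, and 2-4 tree types, and you correctly identify the appearance of Hadamard products as the reason $\phi_{(\circ)}$ (not just $\phi$) is needed and that vanishing pseudo-variance kills the ``parallel'' contributions that would otherwise bring in $\phi_{(t)}$.

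Where your proposal falls short is in its treatment of step (a), which you call ``a standard application of the topological expansion for Wigner matrices.'' It is not, for two reasons. First, the heuristic that a diagram connecting $r\geq 3$ trace cycles ``loses at least one factor of $N$'' is not the actual mechanism: the paper shows that for the dominant topologies (double unicyclic and 2-4 tree type), a connected component involving at least three trace cycles forces one of them to be $X$-disconnected from the rest, which makes the weight $\omega_X$ \emph{exactly zero} by centeredness (Lemma~\ref{Lem:Key} and the parity argument of Lemma~\ref{Lem:ParityArgument}); it is not simply a subleading-order count. Second, and more importantly, the $N$-power count over the deterministic part is not free: since the $A_j$ are arbitrary bounded matrices and not Gaussian or Haar-unitary, the usual genus expansion does not apply to them. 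The paper must introduce the graph of deterministic components, the forest of two-edge connected components, and the pruning operation, together with the sharp trace bounds from~\cite{MS11} (Lemmas~\ref{Lem:Dominating}, \ref{Lem:ErrorTerms}, \ref{VariationParityArgu}), precisely to control $\omega_A(\pi)$. This is the bulk of the technical content of Section~\ref{Sec:Tightness} and cannot be waved away. You also do not invoke Hypothesis~\ref{hyp:X}(2), the permutation-invariance of the Wigner law, which is what justifies replacing $\beta_X(\mbf i)$ by a function of the partition $\ker(\mbf i)$ in the first place. Finally, a small inaccuracy: the correction from diagonal blocks $(i,i)(i,i)$ does not depend ``only on $\phi$'' --- it contributes the term $(\eta_\sigma-1-\theta_\sigma)\tilde\phi_{K(\sigma)}$ in Theorem~\ref{MainTh4}, and $\tilde\phi$ is built from $\phi_{(\circ)}$. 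This does not contradict the statement of Theorem~\ref{MainTh1}, but it is worth keeping straight.
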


Note that the covariance of the process is completely determined 
by the second-order distribution via the formula 
$\E\big[z(p)\overline{z(q)}\big] = \phi^{(2)}(p,q^*)$. 

We now turn to the case of general Wigner matrices, without any assumption on the pseudo-variance. In this case we also have to control the limit of expressions involving the transpose of the deterministic matrices.
We denote by $A^t$ the transpose of a matrix $A$. 

\begin{theorem}\label{MainTh3}
Let $\mbf X$ be a collection of independent Wigner matrices, such that each Wigner matrix satisfies Hypothesis \ref{hyp:X}, and let $\mbf A$ be a collection of  deterministic matrices satisfying Hypothesis \ref{hyp:A}. In addition assume that for any $^*$-polynomials $p$ and $q$, the following limit exists
			$$\phi_{(t)}(p, q) := \phi(p\, q^t):= \underset{ N \rightarrow \infty}\lim \frac 1 N \Tr \big[ p(\mbf A) q(\mbf A)^t \big].$$
Then the conclusion of Theorem \ref{MainTh1} is valid, with a limiting second-order $^*$-distribution $\phi^{(2)}(p,q)$ that also depends on $\phi_{(t)}$.
\end{theorem}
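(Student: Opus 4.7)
The plan is to follow the strategy of Theorem~\ref{MainTh1}: establish tightness of the process $\big(Z_N(p)\big)_p$ via a uniform second-moment bound, then identify the joint cumulants of the centered traces and verify that all higher-order cumulants vanish, yielding convergence to a Gaussian process. The only new phenomenon compared to the vanishing-pseudo-variance case is that $\E(x_{12}^2)$ may now be non-zero, which introduces additional pairings in the combinatorial expansion of the covariance, and those pairings are precisely accounted for by $\phi_{(t)}$.

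I would expand $\mathrm{Cov}\big(\Tr\, p(\mbf X,\mbf A), \Tr\, q(\mbf X,\mbf A)\big)$ for monomial $^*$-polynomials $p,q$ via the cumulant expansion of the Wigner entries. At second order, the non-vanishing joint cumulants of the off-diagonal entries are of two types: a ``straight'' cumulant $\kappa_2(x_{ij},x_{ji}) = \E(|x_{12}|^2) = 1$ and a ``twisted'' cumulant $\kappa_2(x_{ij},x_{ij}) = \E(x_{12}^2)$, complemented by the diagonal $\kappa_2(x_{ii},x_{ii}) = \E(x_{11}^2)$; the proof of Theorem~\ref{MainTh1} discards the twisted term by hypothesis. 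The key new observation is that when a twisted cross-pairing links an $X$-factor of $p$ (flanked by deterministic sub-words) with an $X$-factor of $q$, forcing the two pairs of indices to coincide ``in the same order'' rather than in ``opposite order'' reverses the orientation of one of the two matrix paths involved, thus producing a transpose. Each such twisted cross-pairing therefore contributes at leading order a factor of the form $\E(x_{12}^2)\cdot\frac{1}{N}\Tr\big[R(\mbf A)\, S(\mbf A)^t\big]$ for sub-words $R,S$ obtained by splitting and reorienting $p,q$, which converges by assumption to $\E(x_{12}^2)\,\phi_{(t)}(R,S)$.

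The main obstacle will be the combinatorial bookkeeping. One needs a systematic classification of all leading-order diagrams that simultaneously mix straight pairings (yielding the free-probabilistic contribution), twisted pairings (producing the $\phi_{(t)}$-contributions, together with built-in crossings and transposes), diagonal pairings governed by $\E(x_{11}^2)$, and the four-legged vertices from $k_4$ (giving the Hadamard-product contributions governed by $\phi_{(\circ)}$). Extending the genus / non-crossing analysis used in Theorem~\ref{MainTh1} so as to account correctly for the crossings introduced by twisted pairings, and checking that only a finite number of new diagram types survive at leading order, is the technical heart of the argument. Once this classification is carried out, the limiting covariance can be read off directly.

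Finally, to upgrade the covariance computation to full joint Gaussian convergence I would expand the joint cumulants of $\big(Z_N(p_i)\big)_i$ of arbitrary order and show, by the same power-counting, that those of order at least three vanish in the limit, since each twisted pairing consumes only a constant amount of summation freedom and so does not alter the leading $N$-order. Combined with the uniform second-moment bound and the moment method, this yields convergence of $\big(Z_N(p)\big)_p$ to the Gaussian process $\big(z(p)\big)_p$, with second-order $^*$-distribution depending precisely on the Wigner parameters together with $\phi$, $\phi_{(\circ)}$ and $\phi_{(t)}$, as claimed.
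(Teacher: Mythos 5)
Your proposal is correct and follows essentially the same moment-method strategy the paper uses: expand $\tau^{(2)}$ into a sum over quotient graphs (equivalently, pairings/cumulant types of the Wigner entries), classify the leading-order diagrams by power-counting, and observe that the new diagrams introduced by the pseudo-variance $\E(x_{12}^2)$ — the ``parallel''-type annular pairings in the paper's language — reverse the orientation of one cycle and therefore produce traces of products involving a transpose, which are controlled precisely by the extra hypothesis that $\phi_{(t)}$ exists. The paper carries this out via the graph-of-deterministic-components machinery and the distinction between opposite and parallel annular pairings ($\mathcal{Ann}^{opp}$ vs.\ $\mathcal{Ann}^{par}$), but the underlying combinatorial classification and the mechanism you identify are the same.
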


\subsection*{Description of the second-order distribution}
First, in Theorem \ref{MainTh2}, we will describe  the second-order distribution in the case of vanishing pseudo-variance and unit diagonal variance for all the Wigner matrices (i.e., the parameters for each Wigner matrix are $(0,1,k_4)$). In Theorem \ref{MainTh4} we will then extend this to the general case.

For some $m,n\geq 1$, we consider Wigner matrices $X_1 \etc X_{m+n}$ of $\mbf X$, with possible repetitions of the matrices. Moreover, without loss of
generality we assume that the family $\mbf A$ of deterministic matrices
is stable by
$^*$-monomials (that is $p(\mbf A)$ is an element of $\mbf A$ for any $^*$-monomial $p$). We consider deterministic matrices $A_1\etc A_{m+n}$ of $\mbf A$. Defining the random matrices 
	\eq
		p_N = X_1A_1\cdots X_mA_m, & & q_N = X_{m+1}A_{m+1} \cdots X_{m+n}A_{m+n},
	\qe
and the associated polynomials
	\eq
		p =x_1a_1\cdots x_ma_m, & & q = x_{m+1}a_{m+1} \cdots x_{m+n}a_{m+n},
	\qe
in order to completely characterize $\phi^{(2)}$ it is sufficient to give a formula for $\phi^{(2)}(p,q)$, the limit of $$\E\big[ \big(\Tr\, p_N - \E[\Tr \, p_N]\big) \big(\Tr \, q_N - \E[\Tr \, q_N]\big)\big].$$ 

\setbox1=\hbox{\includegraphics{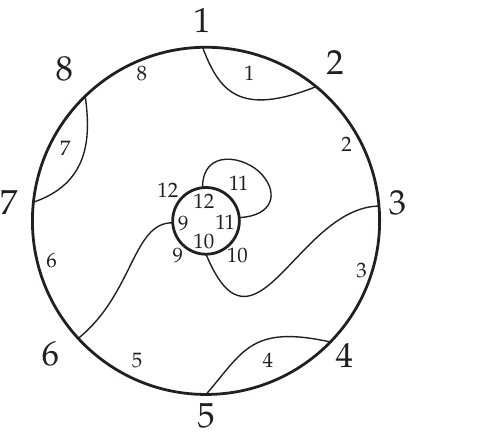}}

\setbox2=\vbox{\hsize 200pt\raggedright
Figure 1. The non-crossing annular pairing $\sigma =
(1,2)(3,10)\ab(4,5)(6,9)\ab(7,8)\ab(11,12)$. Its Kreweras
complement is $K(\sigma)= (1)\ab(\ab 6,\ab
8, 2,\ab 10,\ab 12)\ab(3, 5, 9)(4)(7)(11)$. We have $\phi_{K(\pi)}(a_1, \dots, a_{12}) = \phi(a_1) \ab
\phi(a_7) \ab \phi(a_4)\ab\phi( a_6a_8a_2 
a_{10}a_{12})\ab \phi(a_3 a _5   a_9) \phi( a_{11})$ and 
$\widetilde\phi_{K(\pi)}(a_1, \dots, a_{12}) = \phi(a_1) \ab
\phi(a_7) \ab \phi(a_4)\ab\phi( a_6a_8a_2 \circ
a_{10}a_{12})\ab \phi(a_3 a _5 \circ a_9) \phi( a_{11})$.}

\begin{figure}[t]
$\vcenter{\box1}\vcenter{\box2}$
\end{figure}

\medskip
To give such a formula, we need to recall a few combinatorial facts about annular versions of non-crossing partitions. 
We refer to \cite{MN04} for the background definitions on annular
non-crossing permutations. We denote by $NC_2(m,n)$ the non-crossing pairings on an
$(m,n)$-annulus with at least one through string 
 and by $NC_2^{(l)}(m,n)$ the non-crossing
pairings on an $(m,n)$ annulus with exactly $l$ through
strings. We let $\gamma_{m,n}$ be the permutation $(1, 2, 3,
\dots, m)(m+1, \dots, m+n)$ and we put, for a pairing $\sigma\in NC_2(m,n)$, $K(\sigma)
= \sigma \gamma_{m,n}$; this is a non-crossing permutation called the \textit{Kreweras
complement} of $\sigma$. We define
	\eqa\label{Eq:DefPhiK}
		\phi_{K(\sigma)}(a_1,\ab \dots,\ab a_{m+n}) = \prod_{(i_1\etc i_\ell) \in K(\sigma)}  \phi(a_{i_1} \cdots a_{i_\ell}),
	\qea
where the notation means that the product is over the cycles $(i_1\etc \ab i_\ell)$ of the permutation $K(\sigma)$ (note that, since $K(\sigma)$ is non-crossing, the cycles respect the order on each of the two circles) (see Figure 1) and we recall that $\phi$ is the limiting $^*$-distribution of the deterministic matrices.

We shall need a modification $\widetilde \phi_{K(\sigma)}$ of $\phi_{K(\sigma)}$.  
Let $\sigma \in NC_2^{(2)}(m,n)$, then $K(\sigma)$ will have
exactly two through cycles. Let us write these through
cycles as $(i_1, \dots, i_k, i_{k+1}, \dots, i_l)$ and $(j_1, \dots , j_{k'}, j_{k'+1}, \dots, j_{l'})$, with
$i_1, \dots, i_k\in [m]$ and $i_{k+1},
\dots, i_l\in [m+1, m+n]$, and $j_1, \dots, j_{k'}\in [m]$ and $j_{k'+1},
\dots, j_{l'}\in [m+1, m+n]$. We define then
$\widetilde\phi_{K(\sigma)}(a_1, \dots,\ab a_{m+n})$ by making 
the following two replacements in \eqref{Eq:DefPhiK}:
	\eq
	\phi(a_{i_1} \cdots a_{i_k} a_{i_{k+1}} \cdots a_{i_l})
   & \mapsto &  \phi_{(\circ)}(  a_{i_1} \cdots a_{i_k}, 
  a_{i_{k+1}} \cdots a_{i_l}),\\
 	\phi(a_{j_1} \cdots a_{j_{k'}} a_{j_{k'+1}} \cdots
  a_{j_{l'}}) & \mapsto & \phi_{(\circ)}( a_{j_1} \cdots
  a_{j_{k'}},a_{j_{k'+1}} \cdots a_{j_{l'}}),
  	\qe
where $\phi_{(\circ)}$ is the bilinear form of Hypothesis \ref{hyp:A}. This is illustrated in Figure 1.

\begin{theorem} \label{MainTh2}
Under the assumption
of vanishing pseudo-variance and unit diagonal variance for all involved Wigner matrices,
the second-order distribution in Theorem \ref{MainTh1} is given by
	\begin{multline}\label{eq:thm2}
    \phi^{(2)}(p,q) =\\
      \kern-1em
    \mathop{\sum_{\sigma \in NC_2(m,n)}}_{\mrm{non-mixing}}
      \kern-1em
    \phi_{K(\sigma)}(a_1, \dots, a_{m+n})
    +  \kern-1em
\mathop{\sum_{\sigma \in NC_2^{(2)}(m,n)}}_{\mrm{non-mixing}}
  \kern-1em
k_{4,\sigma}\widetilde\phi_{K(\sigma)}(a_1, \dots, a_{m+n}).
\end{multline}
The condition that $\sigma$ 
is \emph{non-mixing} means that labels 
associated to different
Wigner matrices belong to different cycles of $\sigma$. In the second sum we also require in addition that the four Wigner matrices involved in the two through cycles must all be the same.
The value $k_{4,\sigma}$ is then the parameter of the Wigner matrix
corresponding to the two through cycles.

\end{theorem}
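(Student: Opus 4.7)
The plan is to expand the centered covariance via the moment-cumulant formula applied to the Wigner entries. Writing each trace as a sum over cyclic index tuples,
$$\Tr p_N = \sum_{\mathbf i} X_1(i_1,i_2)A_1(i_2,i_3)\cdots X_m(i_{2m-1},i_{2m})A_m(i_{2m},i_1),$$
and similarly for $\Tr q_N$, the centered covariance becomes a sum over pairs of index tuples and over set-partitions $\pi$ of the $2(m+n)$ Wigner slots, of products of joint cumulants of the $X_k$-entries times products of entries of the $A_k$. The centering retains only those $\pi$ that are \emph{connected} between the $p_N$ and $q_N$ halves (disconnected $\pi$ cancel against $\E[\Tr p_N]\E[\Tr q_N]$), and independence of distinct Wigner matrices forces every block of $\pi$ to consist of entries of a single $X_k$, which is exactly the \emph{non-mixing} condition.

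Under vanishing pseudo-variance and unit diagonal variance, the nonzero second moments reduce to $\E[X(i,j)X(k,l)] = N^{-1}\delta_{il}\delta_{jk}$, and the only nontrivial higher cumulant is at order four, with value $N^{-2}k_4$ on matched quadruples. For a pure pair partition $\sigma$, each pair imposes a delta identification that collapses the index sum into a product along the cycles of the permutation $K(\sigma) = \sigma\gamma_{m,n}$ acting on the $2(m+n)$ $A$-slots. A standard Euler-characteristic (genus) bound on the associated ribbon graph on the $(m,n)$-annulus then shows that the surviving power of $N$, after the $N^{-(m+n)}$ from Wigner normalizations and the $N^{\#\mathrm{cycles}}$ from free indices, is $N^0$ precisely on the non-mixing $\sigma \in NC_2(m,n)$ and strictly subleading otherwise. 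On these $\sigma$ the index sum factors over the cycles of $K(\sigma)$ and converges to $\phi_{K(\sigma)}(a_1,\dots,a_{m+n})$, yielding the first sum in \eqref{eq:thm2}.

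The fourth-cumulant contribution is the delicate part. A single $4$-block in $\pi$ replacing two pairs pays an extra factor $N^{-1}$ relative to the leading pairing but can also merge cycles and create an additional free index, so the total order remains $N^0$ only under very restricted annular geometry. A case analysis shows this cancellation occurs exactly when the four legs of the $4$-block sit on the two through strings of a non-mixing $\sigma \in NC_2^{(2)}(m,n)$ and all four involve the \emph{same} Wigner matrix. In that configuration the extra index identification imposed by the $4$-block forces, along each of the two through cycles of $K(\sigma)$, a row-equals-column pattern linking the two sides of the annulus, which replaces each of the two ordinary traces $\phi$ along the through cycles by the Hadamard bilinear form $\phi_{(\circ)}$, producing $k_{4,\sigma}\widetilde\phi_{K(\sigma)}$ and hence the second sum.

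The main obstacle will be this last step: verifying rigorously that no other placement of a $4$-block on a non-crossing pairing (across two chord pairs, or across a chord pair and a through pair) achieves the leading order $N^0$, and that any partition containing a cumulant of order $\geq 5$ or more than one $4$-block is strictly subleading. This amounts to a refined Euler-characteristic count for ribbon graphs on the annulus with a vertex of valence four, and it is what simultaneously pins down both the annular position of the $4$-block (two through strings of the same color) and the precise algebraic replacement $\phi \mapsto \phi_{(\circ)}$ on the through cycles.
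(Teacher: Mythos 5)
Your route is genuinely different from the paper's. The paper does \emph{not} do a cumulant expansion of the Wigner entries: it expands both traces over index maps $\psi:V\to[N]$, regroups by the kernel partition $\pi=\ker\psi$, writes the centered covariance as $\sum_\pi N^{q(\pi)}\omega_X^{(2)}(\pi)\omega_A(\pi)$ over quotient graphs $T^\pi$, and then classifies which $\pi$ give $q(\pi)=0$ (double-unicyclic or 2-4 tree type) via a parity lemma (Lemma~\ref{Lem:ParityArgument}), the Euler--Hierholzer theorem, and a pruning argument on the graph of deterministic components; boundedness of the $\omega_A$ piece rests on the Mingo--Speicher bound \cite{MS11}, $|\Tr[T_A^{\pi'}]|\le N^{\mathfrak f/2}\prod\|A_j\|$, where $\mathfrak f$ is the number of leaves of the forest of two-edge connected components — not a genus count. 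Your cumulant-of-entries expansion has a real virtue: because a $4$-block directly carries the weight $k_4$, you avoid the paper's inclusion--exclusion between the cases $\mcal{DU}^{opp}_2$ and $\mcal{FT}$ (which in the paper produces $\phi_{K(\sigma)}-\tilde\phi_{K(\sigma)}$ from injective traces, later recombined with the $\mcal{FT}$ weight $\E[|x|^4]-1$ to yield $k_4=\E[|x|^4]-2$). In your framing this recombination is automatic.

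However, the step you flag as the ``main obstacle'' is in fact the entire content of Section~\ref{Sec:Tightness} of the paper, and the phrase ``a standard Euler-characteristic (genus) bound'' overstates what is available. With arbitrary bounded $\mbf A$ interleaved between the Wigner factors, the power of $N$ attached to a given partition is \emph{not} read off a closed-surface genus: it is governed by the two-edge-connectivity structure of the $A$-subgraph (hence the $\mathfrak f$ in \cite{MS11}), and showing $q(\pi)\le 0$ with equality only on the two admissible types requires the parity argument plus the pruning of $\mcal{GDC}(T^\pi)$, neither of which is a genus count. Your case analysis is also incomplete: you rule out cumulants of order $\ge 5$ and multiple $4$-blocks, but you never address blocks of size $3$ (a third cumulant need not vanish under Hypothesis~\ref{hyp:X}); in the paper these are killed precisely by the parity Lemma~\ref{Lem:ParityArgument}, which forces even multiplicities on the cut edges. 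You also never invoke the second half of Hypothesis~\ref{hyp:A} (existence of the Hadamard limits $\phi_{(\circ)}$) when claiming the $4$-block converts $\phi\mapsto\phi_{(\circ)}$ on the two through cycles — that convergence is exactly what Hypothesis~\ref{hyp:A}(2) buys you and it must be cited, not assumed. Finally, a small bookkeeping slip: there are $m+n$ Wigner slots (not $2(m+n)$), and the normalization is $N^{-(m+n)/2}$, not $N^{-(m+n)}$.

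So: the overall architecture (connectedness from centering, non-mixing from independence, pair blocks $\to\phi_{K(\sigma)}$, a single same-colour $4$-block on the two through strings $\to k_4\tilde\phi_{K(\sigma)}$) matches the theorem, and your cumulant bookkeeping would simplify Subsection~\ref{EvenMomNullPV}; but the asserted ``standard'' genus bound does not exist in this generality, and filling it in would essentially reproduce the paper's Lemmas~\ref{Lem:ParityArgument}, \ref{Lem:Dominating}, \ref{Lem:ErrorTerms}, and~\ref{VariationParityArgu}, together with the \cite{MS11} trace bound.
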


\begin{figure}[t]\setcounter{figure}{1}
\leavevmode\kern-3em
\hbox{\includegraphics{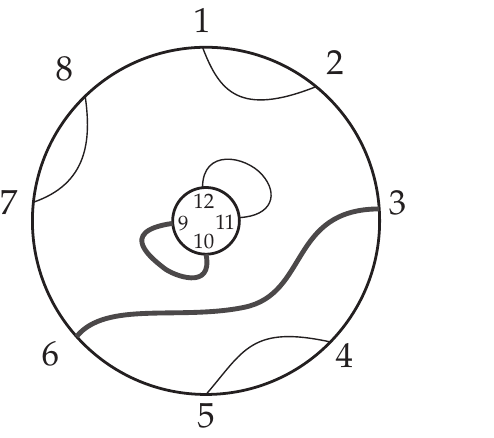}\includegraphics{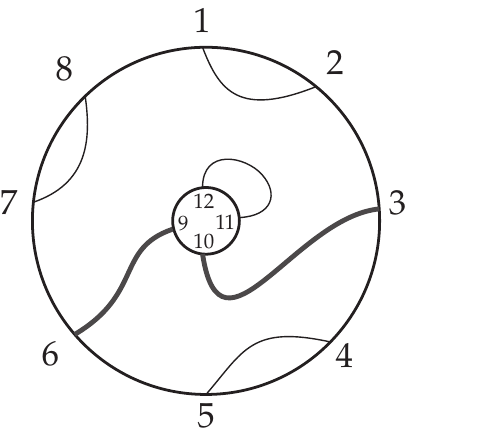}\includegraphics{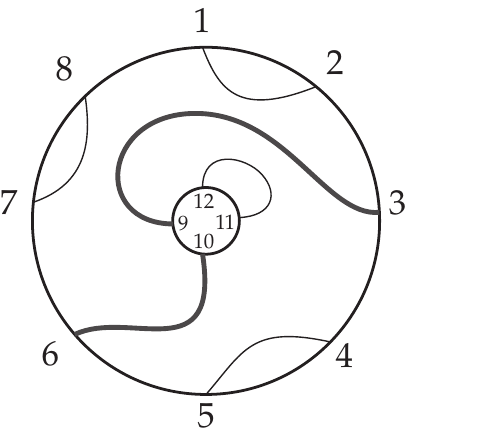}}

\caption{The non-crossing pairing $\sigma =
  (1,2)(3,6)\ab(4,5)\ab(7,8)\ab(9,10)\ab(11,12) \in
  NC_2(8)\ab \times NC_2(4)$. Modifying $(3,6)$ and $(9,10)$, we can produce two
  elements of $NC_2^{(2)}(8,4)$ on the right depending on
  how we connect the through strings.}

\end{figure}

Let us now extend our description of the covariance to the general case without any assumption on the the parameters of the Wigner matrices.
The second-order distribution is then a slight modification of the function \eqref{eq:thm2} of Theorem \ref{MainTh2} that takes into account the pseudo-variance and the diagonal variance parameters. With the same notations as before, consider $\sigma \in NC_2^{(\ell)}(m,n)$ with $\ell$ through strings and let $\theta_\sigma = \theta_{k_1}\cdots \theta_{k_\ell}$ be the product of the pseudo-variance parameters of the Wigner matrices $X_{k_1} \etc X_{k_\ell}$ associated to the through strings. Without loss of generality, we assume that $\mbf A$ is closed under the transpose, i.e., if $A\in\mbf A$, then also $A^t\in\mbf A$. For a monomial $m=x_1a_1 \cdots x_na_n$ we denote $s(m) = x_na_{n-1}^t x_{n-1} \cdots a_1^tx_1 a_n^t$ and extend this definition of $s$ by linearity. We extend also the definition of $\phi$ by setting $\phi(a_i  a_j^t ) = \phi_{(t)}(a_i,a_j)$, of $\phi_{(\circ)}$ by $\phi_{(\circ)}(a_i,a_j^t)=\phi_{(\circ)}(a_i,a_j)$.

Let $\sigma \in NC_2^{(1)}(m,n)$, then $K(\sigma)$ will have
exactly one through cycle. As before we only have to consider non-mixing $\sigma$, thus the two involved Wigner matrices in the through cycle of $\sigma$ will be the same. We denote by $\eta_\sigma$ and $\theta_\sigma$ the diagonal variance and the pseudo-variance of this Wigner matrix. We write the through
cycle of $K(\sigma)$ as $(i_1, \dots, i_k, i_{k+1}, \dots, i_l)$ with
$i_1, \dots, i_k\in [m]$ and $i_{k+1},
\dots, i_l\in [m+1, m+n]$ and define 
$\widetilde\phi_{K(\sigma)}(a_1, \dots, a_{m+n})$ by making 
the following replacement in the factor of
$\phi_{K(\sigma)}(a_1,\ab \dots,\ab a_{m+n})$ in \eqref{Eq:DefPhiK}:
	$$\phi(a_{i_1} \cdots a_{i_k} a_{i_{k+1}} \cdots a_{i_l}) 
   \mapsto \phi_{(\circ)}(  a_{i_1} \cdots a_{i_k}, 
  a_{i_{k+1}} \cdots a_{i_l}).$$

\begin{theorem}\label{MainTh4} 
For arbitrary parameters $(\theta,\eta,k_4)$ for each of the involved Wigner matrices, the second-order distribution in Theorem \ref{MainTh3} is
\begin{align}\label{eq:formula-phi2}
	\notag	\phi^{(2)}(p,q) &=
 \kern-1em
    \mathop{\sum_{\sigma \in NC_2(m,n)}}_{\mrm{non-mixing}}
      \kern-1em
   [ \phi_{K(\sigma)}(a_1, \dots, a_{m+n})+\theta_\sigma \cdot (\phi_{(t)})_{K(\sigma)}(a_1, \dots, a_{m+n})]\\
&\quad
+  \kern-1em
\mathop{\sum_{\sigma \in NC_2^{(2)}(m,n)}}_{\mrm{non-mixing}}
  \kern-1em
k_{4,\sigma}\cdot \widetilde\phi_{K(\sigma)}(a_1, \dots, a_{m+n}).\\
\notag
		&\quad+   \kern-1em \mathop{\sum_{ \sigma \in NC_2^{(1)}(m,n)}}_{\mrm{non-mixing}}\big(\eta_\sigma - 1 - \theta_\sigma\big) \cdot \tilde \phi_{K(\sigma)}(a_1 \etc a_{m+n}).
\end{align}
\end{theorem}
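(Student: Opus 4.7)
The plan is to extend the cumulant/moment expansion used for Theorem~\ref{MainTh2}, systematically tracking the corrections introduced by a non-zero pseudo-variance $\theta$ and a non-standard diagonal variance $\eta$. Starting from the monomial reduction already in place, I would expand
$$\E\big[\big(\Tr\,p_N - \E[\Tr\,p_N]\big)\big(\Tr\,q_N - \E[\Tr\,q_N]\big)\big]$$
as a sum over index maps $[m+n]\to[N]$, then apply the classical cumulant expansion to the independent Wigner entries. Under Hypothesis~\ref{hyp:X} the second-order cumulants decompose as
$$\E[x_{ij}x_{kl}]=\delta_{il}\delta_{jk}+\theta\,\delta_{ik}\delta_{jl}+(\eta-1-\theta)\,\mbb 1_{i=j=k=l},$$
which I call the \emph{straight}, \emph{twisted}, and \emph{diagonal} contributions. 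Higher cumulants are handled as in the proof of Theorem~\ref{MainTh2}: only $k_4$ survives the $N\to\infty$ limit, and only in the specific $NC_2^{(2)}(m,n)$ configuration where its four legs close up on the annulus.

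Next I would run the standard genus/surface argument on an annulus with boundary words $\gamma_m,\gamma_n$. To each pairing $\sigma$ together with a straight/twisted marking of its pairs one attaches a cell decomposition; the associated contribution is $O(1)$ exactly when $\sigma$ is non-crossing on the annulus and its twist pattern is compatible with that structure, while all other contributions are $o(1)$. Summing over the index map then replaces each cycle of the Kreweras complement $K(\sigma)$ by a trace of a product of the $A_i$'s, with a transpose inserted whenever the cycle meets a twisted pair. Pairings with no through string reconstruct $\E[\Tr\,p_N]\cdot\E[\Tr\,q_N]$ and cancel against the centering. For $\sigma\in NC_2(m,n)$ with at least two through strings, non-crossing on the disks bounded by each non-through pair forces those pairs to be straight, whereas the non-mixing condition together with non-crossing on the annulus forces the through strings either to all twist or to all stay straight; summing these two options yields $\phi_{K(\sigma)}+\theta_\sigma(\phi_{(t)})_{K(\sigma)}$, with $\theta_\sigma$ the product of pseudo-variances along the through strings.

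The $NC_2^{(1)}(m,n)$ sector requires separate analysis: the unique through pair admits both realizations, but the two realizations produce the \emph{same} cycle topology on the annulus. The straight$+$twisted sum consequently weights the diagonal configuration $i=j=k=l$ by $1+\theta_\sigma$, whereas the true weight there is $\eta_\sigma$; the missing $\eta_\sigma-1-\theta_\sigma$ is supplied by the diagonal contribution and produces the last sum of~\eqref{eq:formula-phi2} through the Hadamard insertion $\widetilde\phi_{K(\sigma)}$. The $k_4$ piece is inherited verbatim from Theorem~\ref{MainTh2}, since Hypothesis~\ref{hyp:X}(2) collapses the fourth-cumulant Kronecker pattern into exactly the Hadamard shape encoded by $\widetilde\phi$ on the two-through-string cycle.

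The main obstacle will be the combinatorial bookkeeping of twists: verifying rigorously that non-crossing annular pairings admit exactly two compatible twist patterns on their through strings, and identifying $NC_2^{(1)}$ as the unique sector where the diagonal correction $\eta-1-\theta$ survives at leading order. Tightness and the Gaussian limit itself are inherited from the proofs of Theorems~\ref{MainTh1} and~\ref{MainTh3}, so the new content is precisely this identification of the limiting covariance.
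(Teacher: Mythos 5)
Your proposal is essentially sound and follows a genuinely different route from the paper. You organize the computation around the cumulant decomposition
$$\E[x_{ij}x_{kl}]=\delta_{il}\delta_{jk}+\theta\,\delta_{ik}\delta_{jl}+(\eta-1-\theta)\,\mathbb{1}_{i=j=k=l},$$
tracking the straight, twisted, and diagonal pieces through a genus/surface expansion on the annulus, with the $k_4$ term coming directly from the fourth cumulant of the Wigner entries. The paper instead works inside the traffic/labeled-graph framework: it expands the moments over partitions of the vertex set of a labeled graph $T$, identifies contributing partitions by a topological analysis (forest of two-edge connected components, graph of deterministic components $\mathcal{GDC}$, pruning, Eulerian/parity arguments), and classifies the surviving quotient graphs into opposite double unicyclic, parallel double unicyclic, $2$-$4$ tree, and self-loop ($\mathcal{T}^{(1)}$) types. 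In the paper's bookkeeping, the weight attached to the $2$-$4$ tree sector is $\E|x_{12}|^4-1$ rather than $k_4$; the missing $-1-\theta_\sigma$ is recovered by comparing with the degenerate double-cycle contributions $\mathcal{DU}^{opp}_2$ and $\mathcal{DU}^{par}_2$ via the trace/injective-trace relation, whereas your cumulant approach lands on $k_4$ immediately. That is a real simplification at the level of weights.

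Where your sketch leaves a genuine gap: the topological claim that only non-crossing annular pairings with a compatible twist pattern contribute at $O(1)$ is exactly what the paper's Section 3 proves via the Mingo--Speicher sharp bounds for $\frac1N\Tr[T^\pi_A]$, the leaf-count $\mathfrak f$ of the two-edge connected forest, and the pruning analysis; a bare genus argument does not suffice because the deterministic matrices $A_j$ are completely general (only bounded in operator norm), so one cannot bound $\frac1N\Tr[T_A^\pi]$ by a naive index count, and the $\phi_{(\circ)}$ Hadamard insertion itself arises from this bound being saturated on graphs whose $A$-components are exactly two-edge connected. Likewise your assertions that the non-through pairs are forced to be straight, that the through strings must be uniformly straight or uniformly twisted, and that the diagonal correction survives precisely in the $NC_2^{(1)}$ sector are the content of the paper's parity argument (Lemma~\ref{Lem:ParityArgument}) and its self-loop analysis; you correctly flag them as the main obstacle, but they need an actual proof. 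Two further small points: the reality of $\theta$, needed for $\theta_\sigma=|\theta|^2$ to match $k_4$ in the $NC_2^{(2)}$ sector, comes from Hypothesis~\ref{hyp:X}(2) (permutation/conjugation invariance), which your sketch never invokes though it is essential; and your remark that in $NC_2^{(1)}$ the straight and twisted realizations ``produce the same cycle topology'' is misleading --- they produce the same genus but different Kreweras words, namely $\phi$ versus $\phi_{(t)}$, which is exactly why both terms appear separately in the first sum of \eqref{eq:formula-phi2}.
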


Note that the third and the fourth term in Equation \eqref{eq:formula-phi2} cannot appear together. The set $NC_2(m,n)$ is only non-empty if $m+n$ is even; if both $m$ and $n$ are even then the number of through cycles of a $\sigma\in NC_2(m,n)$ must be even and then $NC_2^{(1)}(m,n)$ is empty; if both $m$ and $n$ are odd then the number of through cycles of $\sigma$ must be odd and then $NC_2^{(2)}(m,n)$ is empty.

Let us consider some special cases of our Theorem \ref{MainTh4}.
\begin{enumerate}
\item
If the pseudo-variance is zero and the diagonal variance is equal to one, then the second and the fourth summand in Equation \eqref{eq:formula-phi2} vanish and the result reduces to Equation \eqref{eq:thm2} of Theorem \ref{MainTh2}.
\item
For  {\sc gue} random matrices, with parameters $(0,1,0)$, only the first summand in Equation \eqref{eq:formula-phi2} is different from zero and reproduces the result for second order freeness between {\sc gue} and deterministic matrices from \cite{MS06}.
\item 
For  {\sc goe} random matrices with parameters $(1,2,0)$, the combination $\eta - 1 - \theta$ is again zero and we have only the first two terms in Equation \eqref{eq:formula-phi2}; this yields the formula of Redelmeier \cite{Red11} for real second order freeness between {\sc goe} and deterministic matrices.
\item
For  complex or real Wigner matrices for which the parameters $(\eta,\theta)$ agree with the corresponding Gaussian ensembles, the last term in 
 Equation \eqref{eq:formula-phi2} vanishes and we get in addition to the {\sc gue} and {\sc goe} case the term involving $k_4$. For the case of trivial deterministic matrices, such results go back to Khorunzhy, Khoruzhenko, and Pastur \cite{KKP96}.
\end{enumerate}

\begin{example} 
\begin{enumerate}
	\item We have by a direct computation
	\eq
		\phi^{(2)}(xa_1, xa_2) & = & \Nlim \E\big[ \sum_{i,j,i',j'} A_1(i,j)A_2(i',j') X(j,i)X(j',i')\big]\\
		& = &  \Nlim  \frac 1 N\Big( \sum_{i\neq j} A_1(i,j)A_2(j,i) +  \sum_{i\neq j} A_1(i,j)A_2(i,j)\theta \\
		& & + \sum_i A_1(i,i)A_2(i,i)\eta\Big)\\
		& = & \Nlim  \frac 1 N\Big( \sum_{i,j} A_1(i,j)A_2(j,i) +  \sum_{i,j} A_1(i,j)A_2(i,j)\theta \\
		& & + (\eta - 1 - \theta) \sum_i A_1(i,i)A_2(i,i)\Big)\\
		& =& \phi(a_1a_2) + \phi(a_1a_2^t) +(\eta - 1 - \theta) \phi_{(\circ)}(a_1, a_2),
	\qe
where $\eta$ and $\theta$ are the diagonal variance and pseudo-variance of the Wigner matrix $X$. The three terms indeed correspond to three of the four terms in Theorem \ref{MainTh4}, since for each term there is a single $\sigma \in NC(1,1)$ which consists in the permutation with a single cycle. Note that the  term involving $k_4$ does not play a role for this fluctuation of second order, as there is no contributing $\sigma$ with two through cycles.
	\item The formula gives for the fluctuation of moments of fourth order
	\eq
		\lefteqn{\phi^{(2)}(x_{\ell_1}a_1x_{\ell_2}a_2,x_{\ell_3}a_3x_{\ell_4}a_4)}\\
		& = & \Big( \delta_{\ell_1,\ell_3} \delta_{\ell_2,\ell_4} \phi(a_1 a_4) \phi(a_2a_3) + \delta_{\ell_1,\ell_4}\delta_{\ell_2,\ell_3} \phi(a_1 a_3) \phi(a_2 a_4)\Big)\\
		&  & + \kappa_{4,\ell_1}  \Big( \delta_{\ell_1,\ell_2,\ell_3,\ell_4}  \phi_{(\circ)}(a_1,a_4) \phi_{(\circ)}(a_2,a_3) +  \phi_{(\circ)}(a_1,a_3) \phi_{(\circ)}(a_2,a_4) \Big)\\
	& &  + \theta_{\ell_1}\theta_{\ell_2}\Big( \delta_{\ell_1,\ell_3} \delta_{\ell_2,\ell_4}  \phi(a_1 a_3^t) \phi(a_2a_4^t) + \delta_{\ell_1,\ell_4}\delta_{\ell_2,\ell_3} \phi(a_1 a_4^t) \phi(a_2 a_3^t)\Big).
	\qe
\end{enumerate}
\end{example}

The proofs of Theorems \ref{MainTh1}-\ref{MainTh4} are given in the rest of the paper.

\section{Boundedness of moments}\label{Sec:Tightness}

In the present and the following sections, we consider random
matrices $M_1 \etc M_n$ of the following form: for positive
integers $p_1\etc p_n$, with $m_j=p_1+ \cdots + p_{j-1}$, $m = m_{n+1}$,
\eqa\label{Def:Mj}
	\begin{array}{cc}
	M_j = X_{m_j+1}A_{m_j+1} \cdots X_{m_j+p_j}A_{m_j+p_j}, & \forall j = 1\etc n,\end{array}
\qea
where $X_1\etc X_{m}$ are Wigner matrices in $\mbf X$ and 
$A_1\etc A_{m}$ are deterministic matrices in $\mbf A$.
As in the previous section, we allow
repetitions of matrices. We denote the random variables and the complex numbers
\eqa\label{Def:Z}
	Z_j & = & \Tr ( M_j) - \E \big[  \Tr (  M_j ) \big], \ \forall
\, i=j\etc n,\\
	\tau^{(2)}& = & \E[Z_1 \cdots Z_n], \ \ \ \tau^{(1)} = \E\Big[ \prod_{j=1}^n \frac 1 N  \Tr M_j \Big] .
\qea

The purpose of this section is to prove that $\tau^{(2)}$ is bounded as $N$ goes to infinity and to give a first combinatorial description of the leading order of $\tau^{(2)}$.  

\subsection{General scheme for the study of the statistics $\tau^{(1)}$ and $\tau^{(2)}$}
Here we summarize the general ideas for the study of the asymptotics of $\tau^{(2)}$ (as well as $\tau^{(1)}$). 
The basic argument is to translate the computations of moments in terms of a series of graphs. 
\subsubsection{Preliminary encoding: Labeled graph, quotient graphs and subgraphs}
One can check that
\eqa
		\tau^{(1)} &= & \E\Big[ \prod_{j=1}^n \frac 1 N  \Tr [X_{m_j+1}A_{m_j+1}\cdots X_{m_j+p_j}A_{m_j+p_j}]\Big].
\qea
This is encoded in terms of a \emph{labeled graph} $T$ 
	   (Definition \ref{Def:LabGraph}). It consists of a disjoint union $T=T_1\sqcup \cdots \sqcup T_n$ of $n$ simple directed cycles, where $T_j$ has $2p_j$ edges with alternating labels
	      $x_{m_i+1},a_{m_i+1},\dots, x_{m_i+p_1},a_{m_i+p_i}$ in the opposite sense of the direction of the  cycle, see (a) in Figure \ref{Fig:Table}. Labeled graphs are special cases of the test graphs defined in \cite{Mal20}.

Now one can write

\eqa
                    \tau^{(1)} =&N^{-n} \sum_{ \mbf i} \beta^{(1)}_X(\mbf i) \times \beta_A (\mbf i),
                \label{Eq:Not:1}
\qea
where the sum is over all collections of multi-indices $\mbf i = \big(  i_{(j,k)} , i_{(j,k)'} | j\in [n], k\in [p_j] \big)$  in $[N]$. Here  we have set
	\eqa
	       \quad\beta^{(1)}_X\big(\mbf i \big)  &= & \prod_{j=1}^n\E\big[ X_{m_j+1}(i_{(j,1)},i_{(j,1)'})
                  \cdots X_{m_j+p_j}(i_{(j,p_j)},i_{(j,p_j)'}) \big],	\label{Eq:Not:1.1}\\
                \quad\beta_A\big( \mbf i \big) & = &   \prod_{j=1}^nA_{m_j+1}(i_{(j,1)'},i_{(j,2)}) \cdots A_{m_j+p_j}(i_{(j,p_j)'},i_{(j,1)}).
         \qea
         
 For any multi-index $\mbf i$ as above, we denote by $\pi=\ker(\mbf i)$ the partition of the set $V=\{ (j,k), (j,k)'   | j \in [n], k\in [p_j]\}$ 
defined as follows: for $v,w\in V$ we have  $v\sim_{ \pi} w$ if and only if  $i_v=i_w$. 
To such a partition $\pi$, one can associate \emph{the quotient graph $T^\pi$}: the labeled graph  $T^\pi$ is obtained by identifying vertices of $T$ that 
	    belong to the same block of $\pi$ (see the general definition in Definition \ref{Def:InjTrace}).

Now, the invariance by permutation of the Wigner matrices implies that each quantity $\beta^{(1)}_X(\mbf i)$ depends only on $\pi= \ker(\mbf i)$. We denote it by $\beta^{(1)}_X(\pi)$. We then can write
	\begin{equation}\label{Eq:Not:2}
		 \tau^{(1)}  =   \sum_{\pi \in \mcal P(V)} N^{-n} \beta^{(1)}_X(\pi)  \times \beta_A(\pi)=: \sum_{\pi \in \mcal P(V)} \alpha^{(1)}(\pi),
	\end{equation}
where the sum is over all partitions of $V$ and
	\begin{equation}\label{Eq:Not:3}
\beta_A(\pi)  = 
	\sum_{\substack{ \mbf i\in [N]^{V} \\ \ker(\mbf i) = \pi}}   \prod_{j=1}^nA_{m_j+1}(i_{(j,1)'},i_{(j,2)}) \cdots A_{m_j+p_j}(i_{(j,p_j)'},i_{(j,1)}).
	\end{equation}
The $\beta$-functions are expressed in terms of subgraphs of $T^\pi$. The contribution $\beta_A$ of the deterministic matrices is given by \eqref{Eq:Not:3}. Its expression (Definition \ref{Def:TEC}) is written in terms of the subgraph $T_A^\pi$ of $T^\pi$, which has the same vertices as 
$T^\pi$, and has only those edges of $T^\pi$ that are labeled by $a_1\etc a_{m}$ (i.e, edges labeled by $x_k$ do not appear in $T_A^\pi$) , see the leftmost graph of (d) in Figure \ref{Fig:Table}. One can similarly define $T_X^\pi$ (see the rightmost graph (d) in Figure \ref{Fig:Table}).
The benefit is that in \eqref{Eq:Not:2}, the sum over $\pi$ is finite (independent of $N$) and we have separated the contributions of the Wigner and the deterministic matrices.
	
This preliminary encoding is illustrated in Figure \ref{Fig:Table} below.

\begin{figure}[!t]
\centering 
\subfigure[Solid lines: the labeled graph $T=T_1\sqcup T_2\sqcup T_3\sqcup T_4$. Dashed lines: a partition with 6 blocks of the vertex set $\pi = \big\{\{ 1,4\} , \{1',4'\}, \{2,3'\}, \{2',3\}, \ab \{5, 6', \ab7, 8'\}, \{5', 6, 7', 8\} \big\}$.]{\includegraphics{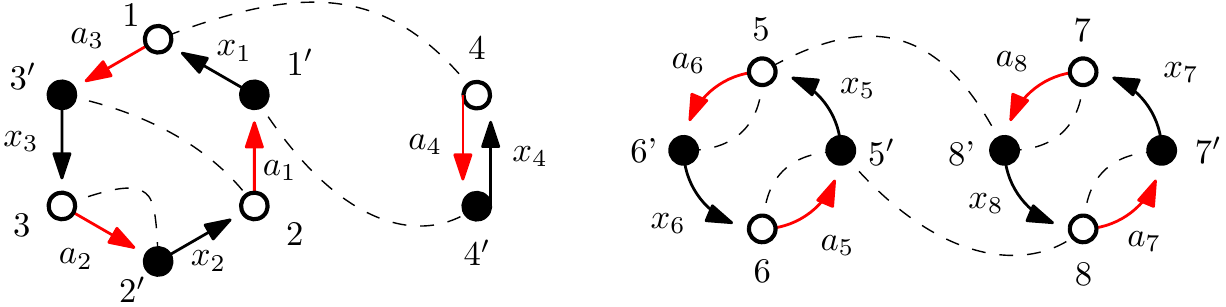}}\\
\subfigure[The quotient graph $T^\pi$, obtained by identifying the  vertices connected by dashed  lines.]{\includegraphics{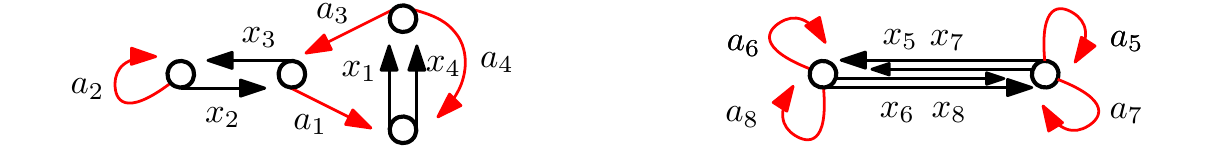}}\\\subfigure[The subgraphs $T^\pi_1\etc T^\pi_4$, obtained by identifying, on each of the subgraphs $T_1$, $T_2$, $T_3$, and $T_4$, the  vertices connected by dashed  lines.]{\includegraphics{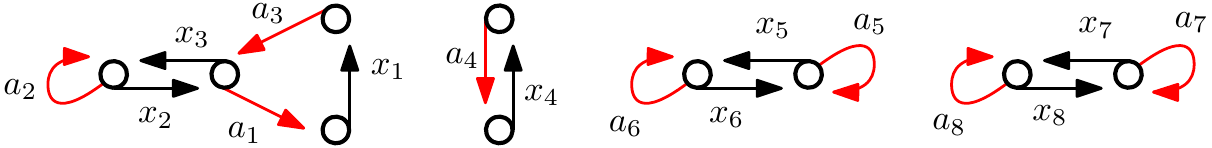}}\\
\subfigure[Left: the subgraph $T^\pi_A$. Right: the subgraph $T_X^\pi$.]{\includegraphics{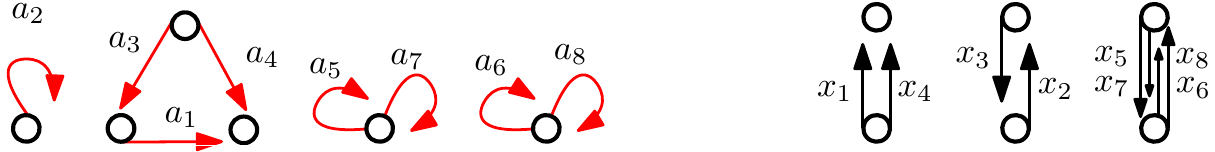}}\\
\caption{Main graphs used for the proof of main theorems.}
  \label{Fig:Table}
\end{figure}

The same method can be used to investigate $\tau^{(2)}$. Similarly to Formula \eqref{Eq:Not:2}, one has that
	 \eqa\label{Eq:UnformalInjective}
	 	\tau^{(2)} = \sum_{\pi\in \mcal P(V)} \alpha^{(2)}(\pi).
	\qea
Each summand 
$\alpha^{(2)}(\pi)$ is parametrized by the quotient graph $T^\pi$, see (b) in Figure \ref{Fig:Table}.
Because of the centeredness of the variables appearing in the second-order statistic, the expression of $\beta^{(2)}_X(\pi)$ is more convoluted. It is determined by the quotient graphs $T_{j,X}^\pi$, $j\in J\subset [1, n]$, for some subset $J$.

 \subsubsection{Boundedness of the statistics}
 
 We can then actually prove the boundedness of 
	\eqa\label{Eq:Note:4}
		\alpha^{(1)}(\pi) = N^{-n} \beta^{(1)}_X(\pi)  \times \beta_A(\pi)
	\qea
 for any $\pi$ as $N$ goes to infinity. We mention briefly the next steps for the convergence of $\tau^{(1)}$ (this is valid for any permutation invariant matrices, not just for Wigner matrices).
\begin{enumerate}
	\item [i)]Find appropriate normalizations $\omega^{(1)}_X(\pi) = N^{-n_X(\pi)}\beta_X(\pi)$ and $\omega_A(\pi) = N^{-n_A(\pi)}\beta_A(\pi)$ such that $\omega^{(1)}_X$ and $\omega_A$ are bounded functions. 
	\item [ii)] Prove that whenever $q(\pi):=n_X(\pi)+n_A(\pi)>n$ then $\omega^{(1)}_X(\pi)=0$. 
	\item [iii)] Identify the valid partitions $\pi$, i.e., those for which $q(\pi)=n$.
\end{enumerate}
We use the same approach for $\tau^{(2)}$.
One can show that the correct normalization of the contribution of the Wigner matrices  (Definition \ref{Def:OmegaX}) is $\omega^{(2)}_X(\pi) = N^{\frac{m}2}\beta^{(2)}_X(\pi)$ (as for  $\omega^{(1)}_X(\pi)$), where $\beta^{(2)}_X(\pi)$ is defined after Eq.~(\ref{eq:QuotientPart:1}).
Identifying the appropriate normalization of the contribution of the deterministic matrices  $\beta_A(\pi)$ is more involved. The sharp bounds of \cite{MS11} imply that the optimal normalization is $\omega_A(\pi) = N^{- \frac{\mathfrak f(\pi)}2}\beta_A(\pi)$, where $\mathfrak f(\pi)$ is the number of leaves of the \emph{forest of two-edge connected components} (t.e.c.c.) $\mcal T\hspace{-2pt}ec(T^\pi_A)$  of $T_A^\pi$ (Definition \ref{Def:TEC}). The constant $\mathfrak f(\pi)$ may take any integer value greater than or equal to the number of connected components $c(\pi)$ of $T_A^\pi$. See the two first graphs in Figure \ref{Fig:TableAux}.

\begin{figure}[!t]
\centering

{\includegraphics{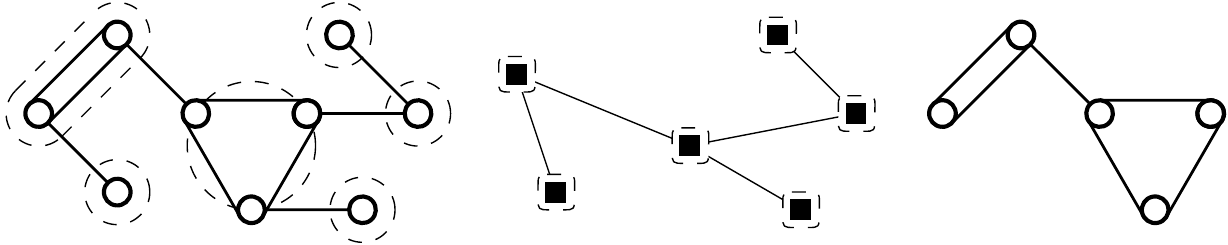}}
\caption{Auxiliary graphs for Section \ref{Sec:Tightness}. From left to right: a graph $T$ (dashed lines encircle the two-edge connected components of $T$); the tree of two-edge connected components of $\mcal T\hspace{-2pt}ec(T)$; the pruning of $\mcal Prun(T)$.}
  \label{Fig:TableAux}
\end{figure}

From the previous steps, one has that
	 \eq
	 	\alpha^{(1)}(\pi)  = & N^{q(\pi)-n}\omega^{(1)}_X(\pi) \omega_A(\pi), \quad
	 	\alpha^{(2)}(\pi)  = & N^{q(\pi)}\omega^{(2)}_X(\pi) \omega_A(\pi), 
	\qe
 where the $ \omega$-functions are bounded and 
\begin{equation}\label{def:q}
q(\pi) = -\frac{m}2  +\frac{ \mathfrak f(\pi)}2.
\end{equation}  
Consider a partition $\pi \in \mcal P(V)$, and hence a subgraph $T^\pi$, such that the number of leaves $\mathfrak f(\pi)$ in the forest of t.e.c.c. of $T^\pi_A$ equals twice the number of connected components $c(\pi)$ of $T^\pi_A$, so that $q(\pi) =-\frac{m}2 + c(\pi)$. One can show that this case gives all the terms contributing to a possible limit of $\tau^{(1)}$ and $\tau^{(2)}$.

To study the quantity $q(\pi)$, we introduce a graph $\mcal {GDC}(T^\pi)$ whose topological properties govern the quantity $q(\pi)$. $\mcal {GDC}(T^\pi)$ is called the graph of deterministic components of $T^\pi$ (Definition \ref{Def:GCC}). It contains $T^\pi_X$ as a subgraph and is obtained from $T^\pi$ by replacing each connected component of $T^\pi_A$ by a single vertex and by connecting these vertices to $T^\pi_X$, see (a) in Figure \ref{Fig:Table2}. We also denote by $\overline{\mcal {GDC}}$ the graph obtained from $\mcal {GDC}(T^\pi)$ by forgetting the multiplicity of edges. These constructions come from \cite[Section 3.7.1]{Mal11} and this step is a particular instance of the asymptotic traffic independence theorem from \cite{Mal20}. 

\begin{figure}[!t]
\centering%
\subfigure[The graph of deterministic components $\mcal {GDC}(T^\pi)$ of the graph $T^\pi$ of Figure \ref{Fig:Table} (b): the left-most subgraph is of double unicyclic type, the right-most is of 2-4 tree type.]{\includegraphics{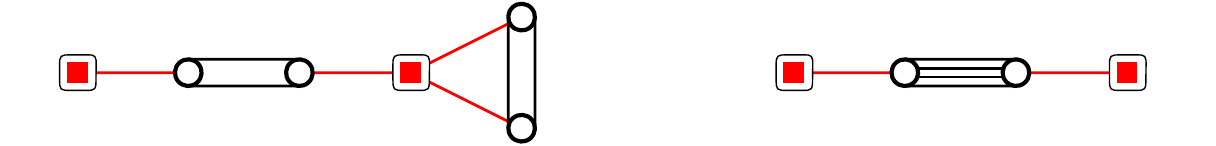}}\\
\subfigure[The graph $\overline{\mcal {GDC}}(T^\pi)$ obtained by forgetting the edge multiplicity.]{\includegraphics{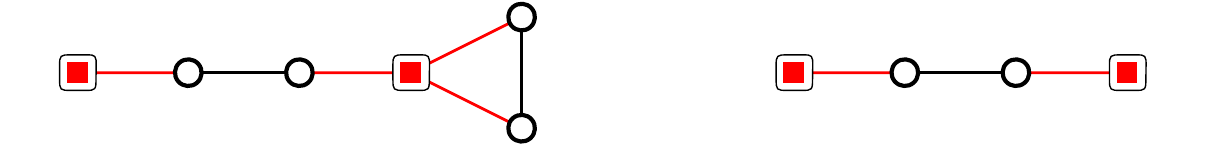}}\\
\caption{Main graphs used for the proof of main theorems.}
  \label{Fig:Table2}
\end{figure}

\subsubsection{Asymptotics: Double trees, double unicyclic and 2-4 tree types}
 In particular, the condition $q(\pi)=n$ implies that the partitions $\pi$ for which $\alpha^{(1)}(T^\pi)$ contributes in the limit are those such that $T_X^\pi$ is a forest of \emph{double trees}, that is a graph whose edges are of multiplicity 2 that becomes a forest when multiplicity is forgotten. This remark is important since double trees (rooted and embedded in the plane) are equivalent to non-crossing pairings, see \cite[\S 1.1.1]{Gui06}. 
 We then obtain 
	\eqa\label{Eq:Not2bis}
		\tau^{(1)} = \sum_{\substack{ \pi \in \mcal P(V) \\ q(\pi) =n }}\omega^{(1)}_X(\pi) \omega_A(\pi) +o(1).
	\qea

  For the second-order injective statistic, the centeredness of $Z_1\etc Z_n$ and the properties of Wigner matrices imply that $\alpha^{(2)}(\pi)$ tends to zero if at least one connected component $S$ of $T_X^\pi$ is a double tree. Using the special form of $T$, we will prove 
that: whenever $\omega^{(2)}_X(\pi)\neq0$ then $q(\pi)\leq0$. Furthermore one has that $q(\pi)= 0$ if and only if each connected component $S$ of $T^\pi$ satisfies:
  \begin{itemize}
  	\item  either the graph of deterministic components $\overline{\mcal {GDC}}(S)$ of $S$ is a unicyclic graph (removing two edges of $\mcal {GDC}(S)$ disconnects the graph) and all the edges of $S_X$ (associated to Wigner matrices) have multiplicity two (see the leftmost component of $T^\pi$ in Figure \ref{Fig:Table}),
	\item  either $\overline{\mcal {GDC}}(S)$ is a tree (removing one edge disconnects the graph) and all the edges of $S_X$ have multiplicity 2  but one which has multiplicity 4 (see the rightmost component of $T^\pi$ in Figure \ref{Fig:Table}).
\end{itemize}
We say that $\pi$ is \emph{valid} whenever $\overline{\mcal {GDC}(T^\pi)}$ satisfies the above properties. The core of the proof will be to show that
  	\eqa
		\label{Eq:ConclusionPartI}
		\tau^{(2)} = \sum_{ \substack{\pi \in \mcal P(V) \\  \mrm{ \ valid}}} \omega^{(2)}_X(\pi) \omega_A(\pi) +o(1).
	\qea

In order to show \eqref{Eq:ConclusionPartI}, one has to consider the case of a partition $\pi$ such that $\mathfrak f(\pi)>2c(\pi)$ (such a partition is not valid). The number of additional leaves $ {\mathfrak f(\pi)}-2c(\pi)$ is then controlled by the number of cycles of $\overline{\mcal {GDC}}$. Quantitatively, this is measured thanks to the \emph{pruning} $\tilde {\mcal {GDC}}(T^\pi)$ (Definition \ref{Def:Pruning}) of the graph of deterministic components. It is obtained by inductively erasing the leaves of $\overline{\mcal {GDC}}$ until the graph is deleafed, see the rightmost graph in Figure \ref{Fig:TableAux}. This reasoning allows to prove that  if $\mathfrak f(\pi)>2c(\pi)$ then $q(\pi)<0$. We can then conclude that the contribution from non-valid partitions vanishes at infinity.

\subsection{Expansion in terms of graphs and separation of contributions}

We first write $\tau^{(1)}$ and $\tau^{(2)}$ using graph notations. Unless explicitly mentioned, graphs are directed, they can be disconnected, and admit possibly loops and  multiple edges. Formally, $V$ is a set and $E$ is a multiset (elements appear with a multiplicity) of elements of $V^2$. We consider indeterminates $x_1\etc x_m, a_1\etc a_m$.

\begin{definition}	
\label{Def:LabGraph}
A labeled graph is a triple $T=(V,E,\gamma)$, where $(V,E)$ is a finite graph and $\gamma$ is a labeling map from $E$ to a subset of $\{1\etc m\}$.
\end{definition}

Below, labeled graphs are given with a partition $E=E_X \sqcup E_A$ of the edge set. Accordingly an edge $e$ in $E_X$ (resp. in $E_A$) such that $\gamma(e)=k$ is associated to the indeterminate $x_k$ (resp. $a_k$). For any $j\in [n]$, we first represent $\Tr  \, M_j$ by a labeled graph $T_j =(V_j,E_j,\gamma_j)$ as follows. The directed graph $(V_j,E_j)$ consists of a simple oriented cycle with $2p_j$ edges, see (a) in Figure \ref{Fig:Table}: we have $V_j = V_j^\circ \sqcup V_j^{\bullet}$, with
	\eq
		V_j^\circ=\{m_j+1\etc m_j+p_j\}, &  V_j^\bullet=\{(m_j+1)'\etc (m_j+p_j)'\}.
	\qe
and $E_j = E_{j,X}\sqcup E_{j,A}$, with
\begin{multline*}
E_{j,X} = \Big\{ e_{m_j+1,x} = \big((m_j+1)',m_j+1\big), \dots,\\
e_{m_j+p_j,x} = (m_j+p_j)',m_j+p_j) \Big\},
\end{multline*}
(representing edges from a vertex of $V_j^\bullet$ to a vertex of $V_j^\circ$),
\begin{multline*}
		E_{j,A} = \Big\{e_{m_j+1,a}=\big( m_j+2,(m_j+1)'\big),\dots, \\
		 e_{m_j+p_j,a}=(m_j+1, (m_j+p_j)')  \Big\},
\end{multline*}
(representing edges from a vertex of $V_j^\circ$ to a vertex of $V_j^\bullet$).
We assign to each edge a label, by means of a map $\gamma_j:E_j \to \{m_j+1, m_j+p_j\}$ given by $\gamma_j(e_{k,x}) = \gamma_j(e_{k,a}) =k$. This indicates that the edge $e_{k,x}$ is associated to the indeterminate $x_k$ and $e_{k,a}$ is associated to the indeterminate  $a_k$. 

\begin{definition}\label{Def:RetTrace}
\begin{enumerate}
	\item For any labeled graph $T=(V,E,\gamma)$ and for any map $\psi:V\to [N]$, we denote 
	\eq
		 { r(T,\psi)}  
		& = &  \prod_{\substack{e\in E_X}} X_{\gamma(e)}\big( \psi( \mrm{trg} \ e ),\psi( \mrm{src}\, e ) \big) \\
		& &  \times \prod_{\substack{e'\in E_A }}  A_{\gamma(e')}\big( \psi(  \mrm{trg}\, e' ),\psi( \mrm{src} \, e' ) \big),
	\qe
with $\mrm{src}\, e$ and $\mrm{trg}\, e$ denoting the source and the target, respectively, of the edge $e$. 
	\item  For any labeled graph $T=(V,E,\gamma)$, we denote
	$$\Tr [T] = \sum_{ \substack{ \psi: V \to [N]}} r(T,\psi),$$
	and call it the (unnormalized) \emph{trace} of the labeled graph $T$.
\end{enumerate}
\end{definition}

With the above definition we have $\Tr [T_j] = \Tr[M_j]$, and so 
	\eq
		\tau^{(1)} &:=& N^{-n} \E \Big[ \prod_{j=1}^n \Tr \, M_j \Big] = N^{-n} \E \Big[ \prod_{j=1}^n \Tr \, T_j \Big],\\
	\tau^{(2)}& :=&  \E\bigg[ \prod_{j=1}^n \Big( \Tr \,M_j  - \E\big[ \Tr \, M_j \big] \Big) \bigg]= \E\bigg[ \prod_{j=1}^n \Big( \Tr \, T_j  - \E\big[ \Tr \, T_j \big] \Big) \bigg].
	\qe

We denote by $T = T_1 \sqcup  \dots \sqcup T_n = ( V,E, \gamma)$ the labeled graph obtained as the disjoint union of the $n$ graphs. Seeing each $T_i$ as a subgraph of $T$, the vertex set $V$ is $V_1\sqcup \dots \sqcup V_n$, the edge set $E$ is $E_1\sqcup \dots \sqcup E_n$. Then the map $\gamma:E\to [m]$ is defined by $\gamma(e) = \gamma_j(e)$ whenever $e\in E_j$.

We can then write $\tau^{(1)}$ and $\tau^{(2)}$ as functions of $T$:
	\eqa
	\label{Eq:Proof:Step0}
		\tau^{(1)} &=&  N^{-n} \E \Big[ \Tr \, T \Big] = N^{-n}\sum_{\psi : V \to [N]} \E \Big[  r(\sqcup_{j\in [n]} T_{j},\psi)\Big],
	\\\label{Eq:Proof:Step1}
		\tau^{(2)} &=& \sum_{\psi : V \to [N]} \E \bigg[ \prod_{j=1}^n\Big( r(T_j,\psi_{|V_j}) - \E \big[ r(T_j,\psi_{|V_j}) \big] \Big) \bigg].
	\qea

For each $j\in [n]$ we define the subgraph $T_{j,X}=(V_{j}, E_{j,X}, \gamma_{j,X})$ of $T_j$ consisting of the edges $e_{p_{j-1}+1} \etc e_{p_j}$ associated to Wigner matrices. 
The vertex set of $T_{j,X}$ is still $V_j$ and the labelling map $\gamma_{j,X}$ is the restriction of $\gamma_j$ to $E_{j,x}$. We also introduce the labeled subgraph  $T_A = (V,E_A,\gamma_A)$ of $T$ consisting of the edges $e_1' \etc e_{p_n}'$. All these labeled graphs consist of disjoint simple edges. 

For any map $\psi:V\to [N]$, any $j=1\etc n$ and any $J\subset [n]$, we denote by $\psi_j$ the restriction of $\psi$ to $V_j$ and by $\psi_J$ its restriction to $\sqcup_{i\in J} V_i$. Since $r(T_A,\psi)$ is a deterministic quantity, we have
	\eq
		\E \Big[  r(\sqcup_{j\in [n]} T_{j},\psi)\Big]& = &   \E \Big[ r(\sqcup_{j\in [n]} T_{j,X},\psi) \Big] \times r(T_A,\psi)
	\qe
and
	\eq
		\lefteqn{\E \bigg[ \prod_{j=1}^n\Big( r(T_j,\psi_j) - \E \big[ r(T_j,\psi_j) \big] \Big) \bigg]  }\\
		& = & \E \Big[ \prod_{j=1}^n\Big( r(T_{j,X}\psi_j) - \E \big[ r(T_{j,X},\psi_j) \big] \Big)  \Big]\times r(T_A,\psi)\\
		& = & \kern-3pt\sum_{J\subset[n]} \E \Big[ r(\sqcup_{j\in J} T_{j,X},\psi_J) \Big] \times (-1)^{n-|J|}\prod_{j\notin J}\E \Big[ r( T_{j,X},\psi_j) \Big]  \times r(T_A,\psi).	
	\qe
With \eqref{Eq:Proof:Step0} and \eqref{Eq:Proof:Step1} this gives a first expression for $\tau^{(1)}$ and $\tau^{(2)}$ where we separate the terms from Wigner and deterministic matrices.

\subsection{Regrouping terms and good decomposition}

For any map $\psi:V \to [N]$, denote by $\ker \psi$ the partition such that $v\sim_{\ker \psi} w$ whenever $\psi(v)=\psi(w)$, i.e. two vertices are in a same block if and only if $\psi$ attributes the same value for both of them. By permutation invariance of the Wigner matrices,  the value of $\E \big[ r( \sqcup_{j\in J} T_{j,X},\psi_j) \big]$ depends on $\psi$ only through the restriction of $\ker \psi$ on $\sqcup_{j\in J}V_j$. For any partition $\pi$ of $V$ and any $J\subset[N]$, we denote by $\pi_J$ the restriction of $\pi$ on $\sqcup_{j\in J}V_j$ and by $ R( \sqcup_{j\in J} T_{j,X},\pi_J)$ the common value of $\E \big[ r( \sqcup_{j\in J} T_{j,X},\psi_J) \big]$ for any $\psi$ such that $\ker \psi=\pi$. 
We then deduce from \eqref{Eq:Proof:Step0} and \eqref{Eq:Proof:Step1} 
	\eqa
	\label{eq:QuotientPart:0}
		 \tau^{(1)}&=&  \sum_{\pi \in \mcal P(V)} N^{-n} \beta^{(1)}_X(\pi) \times \beta_A(\pi),\\
		 \label{eq:QuotientPart:1}
		 \tau^{(2)}&=&  \sum_{\pi \in \mcal P(V)}  \beta^{(2)}_X(\pi) \times \beta_A(\pi),
	\qea
 where
	\eq
		\beta^{(1)}_X(\pi)& = & R(\sqcup_{j\in [n]} T_{j,X},\pi), \: \beta_A(\pi)  =  \sum_{ \substack{ \psi: V \to [N] \\ \mrm{ s.t. \, } \ker \psi=\pi}} r(T_A,\psi),\\
		\beta^{(2)}_X(\pi)& = & \sum_{J\subset[n]}   (-1)^{n-|J|}R(\sqcup_{j\in J} T_{j,X},\pi_J)  \times \prod_{j\notin J}R( T_{j,X}, \pi_j).
	\qe 

\subsubsection{Contribution from Wigner matrices}
By the definition of the function $R$ and due to the $\frac 1{\sqrt N}$-normalization of Wigner matrices, it is clear that $\omega^{(i)}_X(\pi) := N^{\frac m 2} \beta^{(i)}_X(\pi)$ is bounded. We re-write below this contribution in an explicit way.

\begin{definition} \label{Def:InjTrace}
For any labeled graph $S=(W,F,\delta)$ and any partition $\pi$ of $W$, we define by $S^\pi=(W^\pi, F^\pi, \delta^\pi)$ the labeled graph obtained by identifying vertices of $S$ that belong to a same block of $\pi$. An edge $e=(v,w)$ of $S$ becomes an edge $e_\pi=(B_v,B_w)$ of $S^\pi$, where $B_v$ and $B_w$ are respectively the blocks containing $v$ and $w$. The label of $e_\pi$ is the label of $e$, namely $\delta^\pi(e_\pi) = \delta(e)$. We say that $S^\pi$ is a \emph{quotient} of $S$. 
\end{definition}

Note that for $j=1\etc n$ and any $\pi\in \mcal P(V_j)$ one has $R(T_{j,X},\pi)  = R(T_{j,X}^{\pi}, \mbf 0)$, where $\mbf 0$ is the partition consisting of singletons only. 

\begin{definition}\label{Def:OmegaX}
Let $T^\pi$ be a quotient graph of $T=T_1\sqcup \cdots \sqcup T_n$. Denote by $T_j^\pi=(V^\pi_j,E^\pi_j,\gamma^\pi_j)$ the quotient of $T_j$ by $\pi$ for any $j\in [n]$. We define the weights  associated to the Wigner matrices by
	\eqa\nonumber
			\omega^{(1)}_X(\pi) & =  &\E\Big[ \prod_{\substack{ e\in E^\pi_{j,x} \\ \forall j\in [n]}} \sqrt N X_{\gamma_j^\pi(e)}\big( \psi_0(  \mrm{trg}\, e ),\psi_0( \mrm{src} \, e) \big)\Big] \\
	 		\omega^{(2)}_X(\pi)& = &\sum_{J\subset [n]} \E\Big[ \prod_{\substack{ e\in E^\pi_{j,x} \\ \forall j\in J}} \sqrt N X_{\gamma_j^\pi(e)}\big( \psi_0(  \mrm{trg}\, e ),\psi_0( \mrm{src} \, e) \big)\Big] \nonumber\\
			& & \ \ \ \times (-1)^{n-|J|} \prod_{j\notin J}\E\Big[\prod_{e\in E^\pi_{j,x}}  \sqrt N X_{\gamma^\pi_j(e)}\big( \psi_0(  \mrm{trg}\, e ),\psi_0( \mrm{src} \, e) \big)   \Big],\label{Eq:OmegaX}
	\qea
for any choice of injective map $\psi_0:V_j\to [N]$ (the value is independent of this choice).
\end{definition}

\begin{example} Let $n=4$ and $T$ and $\pi$ be as Figure \ref{Fig:Table}. We denote by $z_j$ a random variable distributed as the $(1,2)$ entry of the (unnormalized) Wigner matrix $\sqrt N X_j$. For any $J\subset [n]$ we denote by $\omega_X(\pi)[J]$ the corresponding summand in  Equation \eqref{Eq:OmegaX}. For $J=\{1,2,3,4\}$, we have
	$$\omega_X(\pi)[J] = \E[ z_2\overline{z_3}]\E[z_1z_4] \E[z_5z_7\overline{z_6z_8}].$$
We recall that when denoting the Wigner matrices $X_1\etc X_m$ we allow possible repetition of a same matrix, and so this term is possibly nonzero only when $X_2=X_3$, $X_1=X_4$, and some repetitions occur among the matrices $X_5 \etc X_8$. Note also that we can indifferently write $\E[ z_3\overline{z_2}]$ instead of $\E[ z_2\overline{z_3}]$ since these quantities are equal by complex conjugate invariance of Wigner matrices entries. 
For $J=\{1,2\}, \{1,2,3\}, \{1,2,4\}$, we have in each case
	$$\omega_X(\pi)[J] = \E[ z_2\overline{z_3}]\E[z_1z_4] \E[z_5\overline{z_6}]\E[z_7\overline{z_8}].$$
Otherwise, when $J$ does not contain both $1$ and $2$, then 
	$$\omega_X(\pi)[J] = \E[ z_2\overline{z_3}]\E[z_1]\E[z_4] \times \dots = 0$$
 by centeredness of the entries.

\end{example}

\subsubsection{Contribution from deterministic matrices}

\begin{definition}\label{Def:TEC}
For any labeled graph $S$ with vertex set $W$, the (unnormalized) \emph{injective trace} of the labeled graph $S$ is
	$$\Tr^0 [S] = \sum_{ \substack{ \psi: W \to [N] \\ \mrm{injective}}} r(S,\psi),$$
where $r(S,\psi)$ is defined in Definition \ref{Def:RetTrace}.
\end{definition}
We also need the following definition.
\begin{definition}\label{Def:TEC2}
\begin{enumerate}
	\item A cutting edge of a graph is an edge whose removal increases the number of connected components.
	\item A two-edge connected graph is a connected graph with no cutting edge. Similarly a two-edge connected component of a graph is a maximal connected sub-graph which is two-edge connected.
	\item The forest of two-edge connected components of a graph $S$ is the graph $\mcal T\hspace{-2pt}ec(S)$ whose vertices are the two-edge connected components of $S$ and whose edges are the cutting edges of $S$, making links between the components that contain the source and the target of the edge.
	\item A trivial component of $\mcal T\hspace{-2pt}ec(S)$ is a component consisting of a single vertex. We denote by  $\mathfrak f( S)$ the number of leaves of the forest of two-edge connected components $\mcal T\hspace{-2pt}ec(S)$, with the convention that a trivial component has two leaves.
	\end{enumerate}
\end{definition}
Lastly we define the weight associated to the deterministic matrices.
\begin{definition}
 Let $\pi$ be a partition of the vertices of $T$. We define the weight $\omega_A(\pi)$  associated to the deterministic matrices by
	\eqa\label{Eq:OmegaA}
		\omega_A(\pi) & = &   {N^{-\frac{\mathfrak f(T^\pi_A)}2}}\Tr^0\big[T^\pi_A\big].
	\qea
\end{definition}

\begin{example}Let $T$ and $\pi$ be as Figure \ref{Fig:Table}. The leftmost graph $T_A^\pi$ of (d) in Figure \ref{Fig:Table} has 4 connected components, all of them are two edge-connected. Hence by convention one has that $\frac{\mathfrak f(T^\pi_A)}2=4$. Then one has
	\eq
		\omega_A(\pi) &  =  &   N^{-4} \sum_{\substack{i_1\etc i_8\in [N]\\\mrm{pairwise \ distinct}}} 
		A_2(i_1,i_1)  \times
		A_1(i_2,i_3) A_3(i_3,i_4)A_4(i_2,i_4)
		\\
		& & \ \ \ \times  A_5(i_5,i_5)A_7(i_5,i_5)
		\times A_6(i_6,i_6)  A_8(i_6,i_6)
	\qe
\end{example}

\begin{lemma}For any $\pi\in \mcal P(V)$, $\omega_A(\pi)$ is bounded uniformly in $N$. 
\end{lemma}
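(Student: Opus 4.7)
Since $\|A_j\|\le K<\infty$ for all $j\in J$ by Hypothesis \ref{hyp:A}(1), it suffices to establish a graph-theoretic bound of the form
$$|\Tr^0[T_A^\pi]|\;\le\;C(K)\,N^{\mathfrak f(T_A^\pi)/2},$$
with $C(K)$ depending only on $K$ and the (fixed) total number of edges of $T_A^\pi$, which is independent of $N$. This is the sharp bound of \cite{MS11} applied to our labeled graph $T_A^\pi$; my plan is to recover it by induction on the structure of $\mcal T\hspace{-2pt}ec(T_A^\pi)$.

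\emph{Step 1 (reduction to a single connected component).} Decompose $T_A^\pi$ into its connected components $C_1,\dots,C_s$. The injectivity constraint in $\Tr^0$ forbids indices from different components to coincide, but by standard inclusion–exclusion this only costs a correction of strictly smaller order in $N$ relative to the unrestricted product $\prod_i \Tr^0[C_i]$. Since $\mathfrak f$ is additive over components (with the convention that a trivial component of $\mcal T\hspace{-2pt}ec$ contributes $2$ leaves), it is enough to bound each $|\Tr^0[C_i]|\le C(K)\,N^{\mathfrak f(C_i)/2}$ separately.

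\emph{Step 2 (base case: a single two-edge connected graph).} If $C$ is two-edge connected, then $\mcal T\hspace{-2pt}ec(C)$ is a single trivial vertex and $\mathfrak f(C)=2$, so the target bound is $|\Tr^0[C]|\le C(K)\,N$. The key fact is that a two-edge connected multigraph admits an Eulerian-type closed walk, so that $\Tr[C]$ can be recognized as (or majorized by a finite sum of) traces of ordinary products of matrices drawn from $\mathbf A$, each of which is bounded by $N\prod_e \|A_{\gamma(e)}\|\le N K^{|E(C)|}$. Passing from $\Tr$ to $\Tr^0$ only removes lower-order diagonal terms, which are absorbed into the constant.

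\emph{Step 3 (inductive step: pruning a leaf of $\mcal T\hspace{-2pt}ec(C)$).} When $C$ has at least one cutting edge, select a leaf $v$ of $\mcal T\hspace{-2pt}ec(C)$ attached by a single cutting edge $e$, and split $C=C'\cup\{e\}\cup C''$ with $C''$ the two-edge connected component indexed by $v$. Summing first over the indices of the vertices of $C''\setminus\{\text{attachment point}\}$ and of $e$, a Cauchy–Schwarz estimate in the spirit of \cite{MS11} bounds this partial sum by $\sqrt N\,\cdot\|B\|\,\le\,\sqrt N\cdot K^{|E(C'')|+1}$ uniformly in the indices of $C'$, where $B$ is a suitable matrix built from the $A_j$'s along the subgraph $C''\cup\{e\}$. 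The residual sum is a $\Tr^0$ over the reduced graph $C'$, whose leaf count is $\mathfrak f(C)-1$. Induction then delivers the claimed estimate.

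\emph{Main obstacle.} The delicate point is Step~3: one has to verify that each peeling of a leaf yields the sharp $\sqrt N$ factor and not the naive $N$ factor that one would get by first bounding matrix entries by operator norms. This requires invoking the appropriate Cauchy–Schwarz / trace inequality adapted to the two-edge connected component structure, and checking the bookkeeping of leaves (including the special case when the final core of $C$ becomes a single trivial component and the convention $\mathfrak f=2$ is activated). These accounting issues are precisely what is handled in \cite{MS11}, which we apply as a black box.
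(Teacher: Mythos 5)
Your proposal is headed in the right direction — the essential ingredient is indeed the Mingo–Speicher bound from \cite{MS11} — but there are two genuine gaps.

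First, the \cite{MS11} bound is stated for the ordinary trace $\Tr[T_A^{\pi'}]$, not for the injective trace $\Tr^0[T_A^\pi]$, and the passage between them is not as innocuous as your write-up suggests. The paper handles this by the exact M\"obius inversion
$$\Tr^0[T_A^\pi] = \sum_{\pi' \in \mcal P(V^\pi)} \mrm{Mob}(0,\pi')\, \Tr[T_A^{\pi'}],$$
over the \emph{full} partition lattice of $V^\pi$, which simultaneously accounts for identifications within a single component and across components. Your Step~1 only discusses cross-component identifications and presents the product $\prod_i\Tr^0[C_i]$ as the ``leading term'' with corrections of ``strictly smaller order,'' which you do not justify and which is not actually what is needed (or, in general, true). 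Similarly, in Steps~2--3 the replacement of $\Tr$ by $\Tr^0$ is waved through with the phrase ``removes lower-order diagonal terms,'' which would itself require the kind of argument you are trying to avoid.

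Second, and more importantly, you never invoke the key monotonicity that makes the M\"obius inversion harmless: $\mathfrak f(T_A^{\pi'}) \le \mathfrak f(T_A^{\pi})$ for every quotient $\pi'\in\mcal P(V^\pi)$, i.e.\ the number of leaves of the forest of two-edge connected components can only decrease (or stay the same) under further vertex identifications. This is precisely what guarantees that \emph{every} term $\Tr[T_A^{\pi'}]$ in the signed sum is $O(N^{\mathfrak f(T_A^\pi)/2})$, hence that the correction terms are \emph{at most} of the same order — not ``strictly smaller,'' but that is all one needs. Without this observation your inclusion–exclusion step has no control over the quotient graphs it produces. Once you add the M\"obius identity and the monotonicity of $\mathfrak f$, the whole argument reduces to one citation of \cite{MS11}, and the attempt to re-derive that bound via your Steps 2–3 (which you then say you will apply as a black box anyway) becomes unnecessary.
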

\begin{proof}
We have the relation
	$$\Tr^0[T_A^\pi] = \sum_{\pi' \in \mcal P(V^\pi)} \mrm{Mob}(0,\pi') \Tr[T_A^{\pi'}],$$
where $ \mrm{Mob}$ is the M\"obius function of the poset of partitions of the vertex set of $T_A^\pi$, see \cite{Mal20}. By \cite{MS11}, for any labeled graph $T_A^{\pi'}$ we have the bound
	$$ \big| \Tr [T_A^{\pi'}] \big| \leq N^{\frac{\mathfrak f(T_A^{\pi'})}2} \times \prod_{j=1}^n\| A_j\|.$$
Note moreover that $\mathfrak f(T_A^\pi) \geq \mathfrak f(T_A^{\pi'})$ for any $\pi'\in \mcal P(V^\pi)$, i.e. the number of leaves of the forest of two-edge connected components cannot increase by taking a quotient. Hence $\Tr[T_A^{\pi'}] = O \big( N^{  \frac{ \mathfrak f( T_A^\pi)}2}\big)$ for any $\pi'\in \mcal P(V^\pi)$, and so we get as well $\Tr^0[T_A^\pi] = O \big( N^{  \frac{ \mathfrak f( T_A^\pi)}2}\big)$. The proof then follows from (\ref{Eq:OmegaA}).
\end{proof}

Recalling from Eq. (\ref{def:q}) that  $q(\pi) = -\frac{m}2+\frac{\mathfrak f(T_A^\pi)}2$, we have finally obtained
	\eqa\label{Eq:Step0}
		\tau^{(1)} &=& \sum_{ \pi \in \mcal P(V)} N^{q(\pi)-n}  \omega^{(1)}_X(\pi) \times \omega_A(\pi),\\
	\label{Eq:Step1}
		\tau^{(2)} &=& \sum_{ \pi \in \mcal P(V)} N^{q(\pi)}  \omega^{(2)}_X(\pi) \times \omega_A(\pi),
	\qea
where we recall that $\omega^{(1)}_X$ and $\omega^{(2)}_X$ are defined in \eqref{Eq:OmegaX}, $\omega_A$ is defined in \eqref{Eq:OmegaA}. Note that $m$ equals $|E_X|$ the total number of edges associated to Wigner matrices in $T$ (or equivalently in $T^\pi$).

\subsection{The topological analysis} 
In this subsection we identify the partitions that contribute to $\tau^{(1)}$ and $\tau^{(2)}$. To describe the connected components that contribute to the limit of these statistics, we need the following Definitions \ref{Def:GCC} and \ref{Def:DUG}.

Fix $\pi$ in $\mcal P(V)$ and denote by $V^\pi$ the vertex set of $T^\pi$ and $E^\pi$ its multi-set of edges. We first analyze in more detail the quantity $q(\pi)$. 
\begin{definition}\label{Def:GCC} The graph of deterministic components of $T^\pi$ is the undirected graph  $\mcal {GDC}(T^\pi)= (\mcal V, \mcal E)$, where
\begin{itemize}
	\item the vertex set $\mcal V$ consists of the disjoint union of the vertex set $V^\pi$ of $T_X^\pi$ and of the set $  C_{A}$ of connected components of $T^\pi_A$ (we will in the following call the elements of $C_A$ as \textit{deterministic components}),
	\item the edge set $\mcal E$ consists of the disjoint union of $E^\pi_{X}$ (i.e. the set of edges of $T_X^\pi$) and of the set of pairs $(v,C)$, $v\in V^\pi$, $C\in C_A$ such that $v \in C$ in the graph $T^\pi$. 
\end{itemize}

We also denote $\overline{\mcal {GDC}}(T^\pi) (\mcal V , \overline{\mcal E} )$ the graph obtained from $\mcal {GDC}(T)$ by forgetting the multiplicity of its edges, and assuming that this multiplicity is one for each edge. 
\end{definition}

See examples (e) and (f) in Figure \ref{Fig:Table}. When the quotient graph $T^\pi$ is fixed without ambiguity, we write $\mcal {GDC}$ as a shortcut for $\mcal {GDC}(T^\pi)$. By definition, the number of vertices of ${\mcal {GDC}} $ is $|\mcal V| = |V^\pi| + |C_{A}|$. Denote by $|\overline{E^\pi_X}|$ the number of edges of $T^\pi_X$ counted without multiplicity. We see that the number of edges without counting multiplicity is $|\overline{\mcal E}| = |\overline{E^\pi_X}| + |V^\pi|$, since each vertex $v$ of $T^\pi$ is connected exactly to one deterministic component. We then write 
	\eqa
		q(\pi) & = & -\frac{|E_X|}2+\frac{\mathfrak f(T^\pi_A)}2\nonumber\\
		& =&   \underbrace{\big( |\overline{E^\pi_X}| - \frac{|E_X|}2 \big)}_{:=q_1}  +  \underbrace{\big( |\mcal V| - |\bar{\mcal E}| \big)}_{:=q_2}  +  \underbrace{\Big( \frac{\mathfrak f (T^\pi_A)}2 - |C_{A}| \Big)}_{:=q_2'}.\label{3TermsTopoExp}
	\qea
Note that $q_1$ and $q_2'$ are half integers, whereas $q_2$ is an integer. All the quantities involved are implicitly functions of $\pi$, and are additive with respect to the different connected components $\mcal {GDC}_i$, $i=1\etc c$, of $\mcal {GDC}$. We denote, for each $i=1\etc c$, by  $q_{1,i}$, $q_{2,i}$, and $q_{2,i}'$ the version of $q_1$, $q_2$, and $q_2'$, respectively, defined for the $i$-th component $\mcal {GDC}_i$.

We here state two lemmas that we use in the rest of the section.
\begin{lemma}\label{Lem:VerticesEdges} Let $\mcal G=(\mcal V, \mcal E)$ be a finite connected graph. Then $|\mcal V| - |\mcal E|  \leq 1$ with equality if and only if $\mcal G$ is a tree. 
\end{lemma}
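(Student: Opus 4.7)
The plan is to prove both the inequality and the equality characterization simultaneously by induction on $|\mathcal{E}|$. This is a standard graph-theoretic result; no heavy machinery is needed, and the main concern is presenting the case split cleanly enough that the equality case falls out without extra work.

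For the base case, take $|\mathcal{E}|=0$. Connectedness forces $|\mathcal{V}|=1$, so $|\mathcal{V}|-|\mathcal{E}|=1$, and a single isolated vertex is (by the standard convention) a tree, matching the equality statement. For the inductive step I would pick an arbitrary edge $e\in\mathcal{E}$ and split into two cases according to whether $e$ is a cutting edge of $\mathcal{G}$.

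If $e$ is \emph{not} a cutting edge, then $\mathcal{G}\setminus e$ is connected with the same vertex set and one fewer edge, so by the inductive hypothesis $|\mathcal{V}|-(|\mathcal{E}|-1)\leq 1$, hence $|\mathcal{V}|-|\mathcal{E}|\leq 0<1$. In this case the inequality is strict, and moreover $\mathcal{G}$ contains a cycle (the edge $e$ together with a path in $\mathcal{G}\setminus e$ joining its endpoints), so $\mathcal{G}$ is not a tree, which is consistent with the equality characterization. If $e$ \emph{is} a cutting edge, then $\mathcal{G}\setminus e$ decomposes into two connected components $\mathcal{G}_i=(\mathcal{V}_i,\mathcal{E}_i)$, $i=1,2$, with $|\mathcal{V}_1|+|\mathcal{V}_2|=|\mathcal{V}|$ and $|\mathcal{E}_1|+|\mathcal{E}_2|=|\mathcal{E}|-1$. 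Applying the induction hypothesis to each component and summing yields $|\mathcal{V}|-|\mathcal{E}|+1\leq 2$, i.e.\ $|\mathcal{V}|-|\mathcal{E}|\leq 1$, which is the desired bound.

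For the equality case, $|\mathcal{V}|-|\mathcal{E}|=1$ forces us into the cutting-edge branch with equality in both inductive applications, so $\mathcal{G}_1$ and $\mathcal{G}_2$ are both trees (acyclic by inductive equality clause); reattaching $e$ produces a connected acyclic graph, i.e.\ a tree. Conversely, if $\mathcal{G}$ is a tree, every edge is a cutting edge, and the two pieces obtained by removing any $e$ are themselves trees, so the induction delivers equality. The only mildly delicate point is to make sure the equality characterization is carried through the induction on both sides of the equivalence in tandem rather than proved separately; handling it inside the same case split (as above) avoids duplicating work. The rest is entirely routine.
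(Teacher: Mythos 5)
Your proof is correct. The paper does not actually supply a proof of Lemma \ref{Lem:VerticesEdges}: it is stated as a standard graph-theoretic fact and used immediately afterward, with the proof given only for the companion parity lemma. Your edge-induction with a case split on whether the chosen edge is a cutting edge is the canonical argument for this statement, and the way you fold the equality characterization into the same induction (strict inequality in the non-cutting case, additivity across the two pieces in the cutting case) is clean and complete. One small point worth noting, since the paper's graphs may a priori carry loops and multiple edges: a loop or a parallel copy of an existing edge is never a cutting edge, so those edges land in your first case and the argument goes through unchanged; the cycle you exhibit there is degenerate when $e$ is a loop, but that observation is only a consistency remark and not needed for the bound itself.
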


The second one is referred to as a \emph{parity argument}.

\begin{lemma}\label{Lem:ParityArgument}
Let $S$ be the quotient of a union of simple cycles. Let $\bar e$ be a group of twin edges in $S$. If the removal of the edges of  $\bar e$ disconnects the graph $S$, then the multiplicity of the edges of $S$ coming from each cycle is an even number, with an equal number of edges in one direction and in the other direction. 
\end{lemma}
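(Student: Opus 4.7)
Let me set up the key objects first. The twin edges $\bar e$ all join the same pair of vertices $u,v \in V(S)$; split them into $\bar e^+$ (those directed $u\to v$) and $\bar e^-$ (those directed $v\to u$). Since removing $\bar e$ disconnects $S$, and since every edge of $\bar e$ has its endpoints in $\{u,v\}$, in the graph $S\setminus \bar e$ the vertex $u$ is separated from $v$. Let $A$ be the connected component of $S\setminus \bar e$ containing $u$, and $B:=V(S)\setminus A$ (so $v\in B$). The crucial observation coming out of this setup is: every edge of $S$ having one endpoint in $A$ and one in $B$ must lie in $\bar e$, because such an edge survives in $S\setminus\bar e$ if it is not in $\bar e$, contradicting the disconnection.

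Now I fix one of the original simple cycles $C$ and analyse how it sits inside $S$. Writing $C = (w_1\to w_2\to\cdots\to w_\ell\to w_1)$, its image under the quotient map $V(C)\to V(S)$ is a closed walk $W$ in $S$. Assign to each vertex $w_j$ the color $A$ or $B$ according to where its image lies. An edge $(w_j,w_{j+1})$ of $C$ gets mapped to an edge of $S$ whose endpoints are $([w_j],[w_{j+1}])$; by the previous paragraph, this image edge lies in $\bar e$ if and only if $w_j$ and $w_{j+1}$ have opposite colors. So the edges of $C$ contributing to $\bar e$ are precisely the color-change steps of the walk $W$.

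The parity/direction conclusion follows by walking around $C$. Since $W$ is a closed walk, $w_1$ and $w_{\ell+1}=w_1$ have the same color, so the number of color changes along $C$ is even — this gives that the contribution of $C$ to $\bar e$ has even cardinality. Furthermore, each $A\!\to\!B$ color change uses an edge in $\bar e^+$ (the tail maps to $u\in A$ and the head to $v\in B$) and each $B\!\to\!A$ change uses an edge in $\bar e^-$. Since the closed walk returns to its starting color, the number of $A\!\to\!B$ changes equals the number of $B\!\to\!A$ changes; equivalently the contributions of $C$ to $\bar e^+$ and to $\bar e^-$ coincide. This is exactly the statement.

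The argument is essentially a closed-walk parity count, so there is no real obstacle; the only point that deserves care is the setup step, namely verifying that \emph{every} $A$–$B$ edge of $S$ lies in $\bar e$, which is what allows us to identify ``color change along $C$'' with ``contribution to $\bar e$'' and then with ``edge in $\bar e^{\pm}$'' according to direction.
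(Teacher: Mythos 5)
Your proof is correct, and it takes a genuinely different route from the paper's. The paper argues by induction on the number $m$ of edges that a fixed cycle $C$ contributes to $\bar e$: it first rules out $m=1$ by a disconnection argument, then walks along $C$ from one representative $e_0$ to the next representative $e_1$, argues (again via disconnection) that they must point in opposite directions, contracts the segment between them to form a shorter cycle $C'$ contributing $m-2$ edges, and appeals to the inductive hypothesis. Your argument replaces this reduction with a single two-coloring of the closed walk: once you observe that every $A$--$B$ edge of $S$ must lie in $\bar e$, both conclusions (even count, and equal split between $u\to v$ and $v\to u$) drop out simultaneously from the fact that a closed walk changes color an even number of times, with as many $A\to B$ as $B\to A$ transitions. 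This is cleaner, as the paper's induction has to carry the direction-balance claim through the contraction step, while your argument gets it for free; the paper's version, on the other hand, makes the local structure near $\bar e$ more explicit, which is the form it reuses later (e.g. in Lemma~\ref{Lem:PetiteArete}). One small thing worth making explicit in your write-up: since $\bar e$ consists of loops when $u=v$ and removing loops never disconnects, the hypothesis forces $u\neq v$, so the bicoloring is well defined.
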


\begin{proof}
Assume that a cycle $C$ has $m\geq 1$ twin edges in the group of edges associated to $\bar e$. We show the lemma by induction on $m$. 

We first prove that necessarily $m>1$. Assume, for a contradiction, that $C$ has a one single edge $e_0$ that represents $\bar e$. Let $C\setminus {e_0}$ be the graph obtained from $C$ by removing $e_0$. Since $C$ is a cycle, $C\setminus {e_0}$ is connected, and so in every quotient of $C\setminus {e_0}$ the image of the source and target of $e_0$ belong to a same connected component. Let $S\setminus{\bar e}$ denote the graph obtained from $S$ by removing all the edges representing $\bar e$. Since in  $S\setminus{\bar e}$ there is a subgraph which is a quotient of $C\setminus{e_0}$, then the source and target of the edge of $\bar e$ belong to a same connected component. But this is in contradiction with the assumption that the removal of $\bar e$ disconnects $S$. Hence the multiplicity $m$ is at least equal to 2. 

We now assume that $m$ is larger than 2. We consider a closed walk on $S$, given by the image of the cycle $C$ and starting at an edge $e_0$ in $C$ representing $\bar e$. Let $e_1$ be the edge of $C$ that is the first representant of $\bar e$ that the walk meets after $e_0$. Necessarily in the quotient graph, the source of $e_0$ is the target  of $e_1$ and vice versa: indeed, otherwise one sees (as in the previous paragraph) that removing the edges of $\bar e$ in  $S$ would not disconnect the graph. We now denote by $C'$ the graph obtained  from $C$ by identifying the source of $e_0$ with the target of $e_1$, and deleting $e_0, e_1$, all the edges inbetween them and all vertices that stay isolated after this process. Hence $C'$ is a simple cycle such that of smaller size and has $m-2$ edges representing $e$. By induction, $C$ has an equal number of edges representing $\bar e$ in both directions, which concludes the proof of Lemma \ref{Lem:ParityArgument}.
\end{proof}

We can now state the main result of this subsection, thanks to the following definition.
\begin{definition}\label{Def:DUG}
\begin{itemize}
	\item The $i$-th component of $T^\pi$ is of \emph{double tree type} whenever $(q_{1,i},q_{2,i})=(0,1)$, which means that the edges of $\mcal {GDC}_i$ associated to Wigner matrices have multiplicity two and that $\overline{\mcal {GDC}}_i$ is a tree. 
	\item  The $i$-th component of $T^\pi$ is of \emph{double unicyclic type} whenever $(q_{1,i},q_{2,i})=(0,0)$, which means that the edges of $\mcal {GDC}_i$ associated to Wigner matrices have multiplicity two and that $\overline{\mcal {GDC}}_i$ is a graph with a unique simple cycle. 

	\item The $i$-th component of $T^\pi$ is of \emph{2-4 tree type} whenever $(q_{1,i},q_{2,i})\ab =(-1,1)$, which means that $\overline{\mcal {GDC}}_i$ is a tree and that the edges of $\mcal {GDC}_i$ associated to Wigner matrices have multiplicity two, except for one group of edges of multiplicity four.
\end{itemize}
We denote by $\mcal{D}\mcal{T}_c$ the set of partitions $\pi$ of $V$ such that $T^\pi$ has $c\in [n]$ connected components and all are of double tree type. We denote by $\DUFT$ the set of partitions such that the components are either of double unicyclic or 2-4 tree type.
\end{definition}

 Here is now the main result in our analysis of the $\tau$-functions, which computes the leading order in their asymptotic expansion. Recall that $n$ is the number of connected components of $T$.
 
\begin{proposition}\label{prop:topanalys}
One has the asymptotic large $N$ expansion
	\eqa\label{Eq:ConclusionPartIbis0}
		\tau^{(1)} = \sum_{ \pi \in \mcal D\mcal T_{n}}\omega^{(1)}_X(\pi) \times \omega_A(\pi) +o(1),\\
	\label{Eq:ConclusionPartIbis}
		\tau^{(2)} = \sum_{ \pi \in \DUFT}\omega^{(2)}_X(\pi) \times \omega_A(\pi) +o(1).
	\qea
\end{proposition}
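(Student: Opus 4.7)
My plan is organized around the three-piece decomposition $q(\pi) = q_1 + q_2 + q_2'$ of Equation \eqref{3TermsTopoExp} and the centeredness structure of the alternating sum defining $\omega^{(2)}_X$ in \eqref{Eq:OmegaX}. On each connected component $\mathcal{GDC}_i$ of the graph of deterministic components I will use three topological inputs: $q_{2,i} \leq 1$ from Lemma \ref{Lem:VerticesEdges}, with equality iff $\overline{\mathcal{GDC}}_i$ is a tree; $q_{1,i} \leq 0$ from centeredness and independence of Wigner entries, with equality iff every Wigner edge of $\mathcal{GDC}_i$ is exactly doubled, since any edge of multiplicity one already forces $\omega^{(1)}_X(\pi) = 0$; and the control of the non-negative piece $q_{2,i}'$ via the pruning $\widetilde{\mathcal{GDC}}$ and the parity Lemma \ref{Lem:ParityArgument}. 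Together these will yield $q_i \leq 1$ componentwise, with equality precisely characterizing the double tree type.

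For $\tau^{(1)}$, summing this componentwise bound and using $c(\pi) \leq n$ gives $q(\pi) \leq n$ with equality iff $\pi \in \mathcal{DT}_n$; together with the uniform boundedness of $\omega^{(1)}_X$ and $\omega_A$, this yields \eqref{Eq:ConclusionPartIbis0} directly from \eqref{Eq:Step0}. For $\tau^{(2)}$ the additional input is that $\omega^{(2)}_X(\pi)$ factorizes along the connected components of $T_X^\pi$, since different components of $T_X^\pi$ involve Wigner entries at disjoint positions in $[N]$ and are hence independent. Any connected component of $T_X^\pi$ of double tree type then produces a factor which, by a direct moment computation using the special cyclic form of $T$, cancels the leading $q_i = 1$ contribution that it would otherwise carry. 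Consequently a non-vanishing contribution to \eqref{Eq:Step1} requires $q_i \leq 0$ componentwise, so $q(\pi) \leq 0$. Equality then forces each component of $T^\pi$ to be of double unicyclic type ($q_1 = q_2 = 0$) or of 2-4 tree type ($q_1 = -1$, $q_2 = 1$), with $q_{2,i}' = 0$, i.e. $\pi \in \DUFT$, which proves \eqref{Eq:ConclusionPartIbis}.

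The principal technical difficulty will be the pruning argument underlying the componentwise bound $q_i \leq 1$. I plan to inductively erase the leaves of $\overline{\mathcal{GDC}}_i$; each peeling step removes a cutting edge of $T_A^\pi$, and Lemma \ref{Lem:ParityArgument} applied to the twin Wigner edges sitting on the peeled tail forces their multiplicities, broken down by the original cycle of $T$ they come from, to be even with balanced orientations. This should compel either an extra Wigner multiplicity, lowering $q_{1,i}$, or an extra independent cycle in $\overline{\mathcal{GDC}}_i$, lowering $q_{2,i}$, by exactly the amount that $q_{2,i}'$ has increased. Carrying out this bookkeeping in multi-leaf and nested configurations, together with the moment identity that justifies the cancellation of double tree components of $T_X^\pi$ for $\tau^{(2)}$, is the combinatorial core of the proof; once these ingredients are in place, Proposition \ref{prop:topanalys} follows from the topological and centeredness arguments outlined above.
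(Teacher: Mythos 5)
Your plan shares the paper's skeleton: the three-piece decomposition $q(\pi)=q_1+q_2+q_2'$ from Equation \eqref{3TermsTopoExp}, the componentwise bound $q_{2,i}\leq 1$ (with equality iff $\overline{\mathcal{GDC}}_i$ is a tree) from Lemma \ref{Lem:VerticesEdges}, the centeredness observation forcing Wigner edge multiplicity $\geq 2$ hence $q_{1,i}\leq 0$, and the factorization of $\omega^{(2)}_X$ over $X$-disconnected pieces that kills double tree components (the paper's Lemmas \ref{Lem:Key} and \ref{Lem:NoDoubleTree}, whose combination you subsume into a ``direct moment computation''). But the part you yourself flag as the combinatorial core --- controlling the piece $q_{2,i}'$ --- is under-specified, and the mechanism you sketch does not match what actually makes the estimate go.

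The crucial estimate in the paper is split into two cases (Lemma \ref{Lem:ErrorTerms}): (1) if the Wigner edges in the $i$-th component all have even multiplicity, then $q_{2,i}'=0$; and (2) if $\overline{\mathcal{GDC}}_i$ is \emph{not} a tree, then $q_{2,i}+q_{2,i}'\leq 0$. Case (1) rests on a degree-balance observation --- each vertex of $T$ is adjacent to exactly one $X$-edge and one $A$-edge, so even Wigner multiplicity forces even degree inside each deterministic component, hence Eulerian components by Euler--Hierholzer (Lemma \ref{Euler-Hierholzer}), hence no cutting edge --- and this is exactly what is needed whenever $\overline{\mathcal{GDC}}_i$ is a tree, where the parity Lemma \ref{Lem:ParityArgument} indeed forces even Wigner multiplicities. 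Case (2) is settled by the pruning: after deleafing one writes
\[
q_{2,i}+q_{2,i}'=\sum_{v\in\widetilde{\mathcal V}_{X,i}}\Bigl(1-\tfrac{\deg v}{2}\Bigr)+\sum_{C\in\widetilde{\mathcal V}_{A,i}}\Bigl(\tfrac{\mathfrak f(C)}{2}-\tfrac{\deg C}{2}\Bigr),
\]
and kills each summand using $\deg v\geq 2$ and $\mathfrak f(C)\leq\deg C$. The latter inequality (Lemma \ref{VariationParityArgu}(2)) is the real technical content, and its proof again hinges on Euler--Hierholzer and the notion of Eulerian factors (Lemma \ref{EulerArg}), \emph{not} on the parity Lemma \ref{Lem:ParityArgument}. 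Your proposal conflates these two tools, assigns the parity lemma a role (controlling the pruning) it cannot play, and does not articulate the case split between tree and non-tree $\overline{\mathcal{GDC}}_i$. In particular the claim that each peel ``compels an extra Wigner multiplicity\dots{} or an extra independent cycle\dots{} by exactly the amount that $q_{2,i}'$ has increased'' is asserted but not demonstrated, and it is not the form the actual accounting takes: the bound is a single global degree count on the pruned graph, not a per-peel ledger. Without these ingredients the estimate $q_{2,i}+q_{2,i}'\leq 0$ for non-tree components --- precisely the step that excludes partitions with $\mathfrak f(\pi)>2c(\pi)$ --- remains unsubstantiated.
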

\begin{remark}The asymptotics for $\tau_1$ are the same as in the Gaussian case. The first statement is thus a universality statement.
\end{remark}

The proof of Proposition \ref{prop:topanalys} relies on the following two lemmas, where we recall that the notations for $q(\pi) = q_1+q_2+q_2'$ is from \eqref{3TermsTopoExp}.

\begin{lemma}\label{Lem:Dominating} Let $\pi\in \mcal P(V)$ and denote by $c=c(\pi)$ the number of connected components of $T^\pi$. If $\omega_X^{(1)}\neq 0$, then $q_1+q_2\leq c$ with equality if and only if $\pi\in \DT_c$. Moreover if $\omega_X^{(2)}\neq 0$ then $q_{2,i}+q'_{2,i}\leq 0$ with equality if and only if $\pi \in \DUFT$.
\end{lemma}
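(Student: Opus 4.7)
My plan is to bound $q_1$ and $q_2$ separately, using the decomposition $q(\pi) = q_1 + q_2 + q'_2$ from \eqref{3TermsTopoExp}, and then combine. First I would write $q_1 = \sum_{\bar e}(1 - m(\bar e)/2)$, where the sum ranges over the distinct Wigner edges $\bar e$ of $T_X^\pi$ with multiplicities $m(\bar e)$. If some edge had multiplicity one, the corresponding Wigner entry would appear isolated in a factor of every summand of \eqref{Eq:OmegaX}, and that summand would vanish by centeredness of the diagonal and off-diagonal entries. Hence both $\omega_X^{(1)}(\pi) \neq 0$ and $\omega_X^{(2)}(\pi) \neq 0$ force $m(\bar e) \geq 2$ for every $\bar e$, giving $q_1 \leq 0$ with equality iff every multiplicity equals exactly $2$.

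Next I would exploit that, by construction, $\mcal{GDC}(T^\pi)$ has exactly $c$ connected components (one per component of $T^\pi$), and apply Lemma \ref{Lem:VerticesEdges} to each component of $\overline{\mcal{GDC}}(T^\pi)$ to obtain $q_{2,i} \leq 1$, hence $q_2 \leq c$ with equality iff $\overline{\mcal{GDC}}(T^\pi)$ is a forest. Part (i) then follows: $q_1 + q_2 \leq 0 + c = c$, with equality iff every Wigner multiplicity is $2$ and $\overline{\mcal{GDC}}(T^\pi)$ is a forest, which is exactly the double tree condition defining $\pi \in \DT_c$.

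For part (ii) the plan is to further exclude the double tree configuration $(q_{1,i}, q_{2,i}) = (0,1)$ per component. The alternating inclusion-exclusion sum defining $\omega_X^{(2)}(\pi)$ has the structure of a classical cumulant and vanishes whenever $[n]$ admits a non-trivial splitting $I \sqcup I^c$ such that no twin edge group of $T_X^\pi$ has representatives simultaneously in $\{T_{j,X}^\pi\}_{j\in I}$ and $\{T_{j,X}^\pi\}_{j\in I^c}$. If some component $i$ of $\mcal{GDC}$ were a double tree, then every Wigner twin of that component would be a cutting edge of $\overline{\mcal{GDC}}_i$, and Lemma \ref{Lem:ParityArgument} would force each cutting twin of multiplicity $2$ to have both representatives come from the same parent cycle $T_j$; propagating along the tree structure would yield a non-trivial Wigner-independent splitting of $[n]$, contradicting $\omega_X^{(2)}(\pi) \neq 0$. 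Therefore $(q_{1,i}, q_{2,i}) \neq (0,1)$, and the only remaining ways to attain the maximum $q_{1,i} + q_{2,i} = 0$ under $q_{1,i} \leq 0$, $q_{2,i} \leq 1$ are $(0,0)$ (double unicyclic) or $(-1,1)$ (2-4 tree), exactly characterizing $\pi \in \DUFT$.

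The hardest step will be this non-factorization argument in part (ii): certifying that $\omega_X^{(2)}(\pi) \neq 0$ forbids the double tree case while allowing both the double unicyclic and 2-4 tree cases to saturate the bound. This will require careful bookkeeping of which parent cycles $T_j$ contribute to each twin group of $T_X^\pi$ in a given component of $\mcal{GDC}$, together with a systematic application of Lemma \ref{Lem:ParityArgument} to propagate the parity constraint from cutting twins along the tree structure of $\overline{\mcal{GDC}}_i$; the cycle and higher-multiplicity scenarios need to be treated separately to confirm that they genuinely produce the required cross-cycle Wigner sharing.
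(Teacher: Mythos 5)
Your overall approach mirrors the paper's proof: bound $q_1\leq 0$ via centeredness/multiplicity-$\geq 2$, apply Lemma~\ref{Lem:VerticesEdges} for $q_{2,i}\leq 1$, and use the parity argument (Lemma~\ref{Lem:ParityArgument}) plus the $X$-connectedness criterion to exclude double-tree components when $\omega_X^{(2)}\neq 0$. (Note the lemma statement has a typo: the second clause should read $q_{1,i}+q_{2,i}\leq 0$, not $q_{2,i}+q'_{2,i}\leq 0$; you read it correctly and this is what the paper proves.)

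There is, however, a gap in part~(ii). After excluding $(q_{1,i},q_{2,i})=(0,1)$, you assert that ``the only remaining ways to attain the maximum $q_{1,i}+q_{2,i}=0$'' are $(0,0)$ and $(-1,1)$, but you have not proved that the bound $q_{1,i}+q_{2,i}\leq 0$ actually holds. Since $q_{1,i}$ is only a half-integer while $q_{2,i}$ is an integer, the configuration $(q_{1,i},q_{2,i})=(-\tfrac12,1)$ --- a tree $\overline{\mcal{GDC}}_i$ with one Wigner twin group of multiplicity $3$ and all others of multiplicity $2$ --- would give $q_{1,i}+q_{2,i}=\tfrac12>0$, violating the claimed bound. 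You must rule this out explicitly. The fix uses the same tool you already invoke: when $\overline{\mcal{GDC}}_i$ is a tree, every Wigner twin group is a cutting edge, so Lemma~\ref{Lem:ParityArgument} forces the multiplicity of each twin group to be a sum of even contributions from each parent cycle, hence even; in particular multiplicity $3$ is impossible and $q_{1,i}$ is forced to be an integer $\leq -1$ whenever $q_{2,i}=1$ and $(q_{1,i},q_{2,i})\neq(0,1)$. This is precisely the step the paper spells out (``the next possible order \dots\ is a priori $\sqrt N$ \dots\ not possible by Lemma~\ref{Lem:ParityArgument}''), and it should be added to your argument for the bound to follow.
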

The quantity $q'_{2,i}$ is taken under consideration in the next Lemma.
\begin{lemma}\label{Lem:ErrorTerms} 
\begin{enumerate}
	\item If the  multiplicity in the $i$-th component of $T^\pi$ of edges labeled by Wigner matrices is even, then $q_{2,i}'=0$.
	\item If $q_{2,i}\leq 0$, i.e. $\overline{\mcal {GDC}}_i$ is not a tree, then  $q_{2,i}+q_{2,i}'\leq 0$. 
\end{enumerate}
\end{lemma}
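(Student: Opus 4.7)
The plan is to analyze $q_{2,i}$ and $q_{2,i}'$ through the topology of $\overline{\mcal{GDC}}_i$ and of the forest $\mcal T\hspace{-2pt}ec(T^\pi_{A,i})$, exploiting that $T$ is a disjoint union of simple cycles. Note first that $q_{2,i}'\geq 0$ always, since each connected component $C$ of $T^\pi_A$ inside component $i$ contributes at least $2$ to $\mathfrak f(T^\pi_{A,i})$: a trivial t.e.c.c.\ does so by convention, and any non-trivial $\mcal T\hspace{-2pt}ec(C)$ is a tree with $\geq 2$ leaves. By Lemma \ref{Lem:VerticesEdges}, if $\overline{\mcal{GDC}}_i$ is connected with first Betti number $\beta_i$, then $q_{2,i}=1-\beta_i$, so the hypothesis $q_{2,i}\leq 0$ of part (2) is equivalent to $\beta_i\geq 1$.

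For part (1), the goal is to show $\mathfrak f(C)=2$ for every connected component $C$ of $T^\pi_{A,i}$, i.e.\ that the t.e.c.c.-tree of $C$ is a single vertex or a path. Suppose for contradiction that $\mcal T\hspace{-2pt}ec(C)$ has a vertex of degree $\geq 3$; then there exists a cutting edge group $\bar a$ of $T^\pi_A$ inside $C$ such that at least two leaves of $\mcal T\hspace{-2pt}ec(C)$ lie on one side of $\bar a$. I apply Lemma \ref{Lem:ParityArgument} to the twin group $\bar a$ in $S=T^\pi$ restricted to component $i$. Either removal of $\bar a$ disconnects $S$, in which case each cycle $T_j$ contributes an even number of edges to $\bar a$ with balanced directions, and one descends inductively into the smaller piece carrying the extra leaves; or removal of $\bar a$ does not disconnect $S$, in which case some Wigner twin group $\bar x$ must bridge the two sides of $\bar a$. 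Since $\bar x$ has even multiplicity by hypothesis, the composite cut $\bar a \cup \bar x$ does disconnect $S$ and the parity argument again applies. An induction on the number of deterministic edges inside $C$ then forces $\mcal T\hspace{-2pt}ec(C)$ to have at most two leaves, yielding $q_{2,i}'=0$.

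For part (2), I use the pruning $\tilde{\mcal{GDC}}_i$ of $\overline{\mcal{GDC}}_i$ (Definition \ref{Def:Pruning}), obtained by iteratively deleting degree-$1$ vertices with their incident edges until none remain; this leaves the $2$-core and preserves $|\mcal V|-|\bar{\mcal E}|$, hence the first Betti number $\beta_i$. The key claim is a combinatorial accounting: each excess leaf of $\mcal T\hspace{-2pt}ec(T^\pi_{A,i})$ beyond the $2|C_{A,i}|$ unavoidable ones corresponds, via the inclusion $T^\pi_A\hookrightarrow T^\pi$, to a pendant branch that the pruning of $\overline{\mcal{GDC}}_i$ must consume, and each such consumption uses up one independent cycle of $\overline{\mcal{GDC}}_i$. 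This gives $q_{2,i}'\leq \beta_i-1$ whenever $\beta_i\geq 1$, rearranging to $q_{2,i}+q_{2,i}'\leq 0$.

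The main obstacle is part (1): its hypothesis controls only Wigner edge multiplicities in $T^\pi_X$, yet its conclusion is a topological statement about $T^\pi_A$. Bridging the two requires carefully tracking how the cycles of $T$ pass through each deterministic component $C$ and how Lemma \ref{Lem:ParityArgument} forces their interaction with every cutting edge group of $C$, so that the Wigner even-multiplicity constraint ultimately forbids any branching vertex in $\mcal T\hspace{-2pt}ec(C)$.
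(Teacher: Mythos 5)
Your plan for part~(1) goes through a route that does not work, and the paper's own proof uses a cleaner observation that you are missing. The crucial fact is that since each $T_j$ is a cycle with alternating $X$- and $A$-labels, every vertex of $T$ is incident to exactly one $X$-edge and one $A$-edge; hence after quotienting, the $X$-degree and the $A$-degree of every vertex of $T^\pi$ coincide. If all $X$-multiplicities in component $i$ are even, then all $X$-degrees are even, hence all $A$-degrees are even, hence every vertex of every deterministic component $C$ has even degree in $C$. By Euler--Hierholzer (Lemma \ref{Euler-Hierholzer}), $C$ is Eulerian, so it has no cutting edge and $\mathfrak f(C)=2$. Your attempt to derive the same conclusion from the parity argument fails at the step ``the composite cut $\bar a\cup\bar x$ does disconnect $S$ and the parity argument again applies'': Lemma~\ref{Lem:ParityArgument} is stated for a group of \emph{twin} edges, i.e.\ edges all sharing the same pair of endpoints. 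The set $\bar a\cup\bar x$, with $a$ an $A$-edge and $\bar x$ a Wigner twin group with different endpoints, is not a twin group, so the lemma simply does not apply to it. Moreover the assertion that this composite set disconnects the graph is itself unjustified. The degree-parity argument is both shorter and correct, and it also yields a stronger conclusion (each $C$ is Eulerian, not merely $\mathfrak f(C)=2$).

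For part~(2), your outline (prune, note pruning preserves $|\mcal V|-|\bar{\mcal E}|$, then account for excess leaves against independent cycles) is in the same spirit as the paper's, but the heart of the argument --- ``each such consumption uses up one independent cycle'' --- is a heuristic, not a proof. What is actually needed, and what the paper proves as Lemma \ref{VariationParityArgu}, is that \emph{(i)} any deterministic component removed by pruning has no cutting edge (so $\mathfrak f(C)=2$), and \emph{(ii)} for any deterministic component $C$ surviving as a vertex of $\mcal Prun(\mcal{GDC}_i)$ one has $\mathfrak f(C)\le\mathrm{deg}\,C$. With these two facts, the explicit identity
\[
q_{2,i}+q_{2,i}' \;=\; \sum_{v\in\widetilde{\mcal V}_{X,i}}\Bigl(1-\tfrac{\mathrm{deg}\,v}{2}\Bigr)\;+\;\sum_{C\in\widetilde{\mcal V}_{A,i}}\Bigl(\tfrac{\mathfrak f(C)}{2}-\tfrac{\mathrm{deg}\,C}{2}\Bigr)
\]
immediately gives $q_{2,i}+q_{2,i}'\le 0$ since the pruned graph has no leaves. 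Proving (i) and (ii) uses again the Eulerian structure (via the factor construction and Lemma \ref{EulerArg}), so there is no shortcut around Euler--Hierholzer here either. You should therefore rebuild part~(1) around the degree-parity observation and then state and prove the two facts above before concluding part~(2).
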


\begin{proof}[Proof of Proposition \ref{prop:topanalys}] Assume that Lemmas \ref{Lem:Dominating} and \ref{Lem:ErrorTerms} hold true and let $\pi\in \mcal P(V)$.

Assume that $\omega^{(1)}_X(\pi)\neq 0$. Then one has that $q_{1,i}\leq 0$. Either $\overline{\mcal {GDC}}_i$ is a tree, and the parity argument (Lemma \ref{Lem:ParityArgument}) implies that the number of edges labeled by Wigner matrices is even; so the first parts of the lemmas imply  $q(\pi) - n = q_{1}+q_{2}+q_{2}'-n=q_{1}+q_{2}-n \leq 0$  with equality only if $\pi \in \DT_n$. 
Either $\overline{\mcal {GDC}}_i$ is not a tree, and the second part of Lemma \ref{Lem:ErrorTerms} implies $q_i(\pi) - 1 = q_{1,i}+q_{2,i}+q'_{2,i}-1\leq q_{1,i}-1\leq -1$. Hence in \eqref{Eq:Step0} the only $\pi$ that contribute for $\tau^{(1)}$ in the limit are those such that $T^\pi\in  \DT_n$, which proves of \eqref{Eq:ConclusionPartIbis0}.

Assume now that $\omega^{(2)}_X(\pi)\neq 0$. If $\overline{\mcal {GDC}}_i$ is a tree, the parity argument and the first part of Lemma \ref{Lem:ErrorTerms} imply again $q_{2,i}'=0$, and the second part of Lemma \ref{Lem:Dominating} implies that  $q_i(\pi)  = q_{1,i}+q_{2,i}+0\leq 0$ with equality whenever the $i$-th component of $T^\pi$ is of 2-4 tree type. 
Assume now $\overline{\mcal {GCD}_i}$ is not a tree, i.e. $q_{2,i}\leq 0$. By  the second part of Lemma \ref{Lem:ErrorTerms}, one has that $q_i(\pi)  =q_{1,i}+q_{2,i}+q_{2,i}'\leq 0$. If $q_i(\pi) =0$,  the multiplicity of edges labeled by Wigner matrices is 2. The first part  of Lemma \ref{Lem:ErrorTerms} hence implies that $q_{2,i}'=0$. Hence, when $\mcal {GCD}_i$ is not a tree, $q_i(\pi)=0$  if and only if the $i$-th component  is of double unicyclic type. We thus have shown that the partitions $\pi$ that contribute to $\tau^{(2)}$ in  \eqref{Eq:Step1} are those such that $\pi\in \DUFT$, which proves \eqref{Eq:ConclusionPartIbis} and concludes the proof of Lemma \ref{prop:topanalys}.
\end{proof}

\subsubsection{ Proof of Lemma 23}

We turn to the proof of Lemma \ref{Lem:Dominating}.

The first function $q_1$ is a linear combination of the numbers of edges of $T^\pi$ labeled $X$ when counting and not counting the multiplicity. If $T^\pi$ has an edge of multiplicity one then for any $J\subset [n]$ so has either $\sqcup_{j\in J} T^\pi_J$ or one of the $T^\pi_j$, $j\notin J$; hence by independence and centeredness of the entries of Wigner matrices, and by Formula  \eqref{Eq:OmegaX} we get $\omega_X^{(1)}(\pi) = \omega_X^{(2)}(\pi)=0$. So we can assume that the multiplicity of each edge of $T^\pi$ labeled by a Wigner matrix is at least 2 and we can restrict to $\pi$ with $ q_1\leq 0.$

Lemma \ref{Lem:VerticesEdges} applied component-wise implies $q_{2,i}\leq 1$ for any $i=1\etc c$ with equality whenever $\overline{\mcal {GDC}_i}$ is a tree. Assuming there is no edge of multiplicity one in $T^\pi_X$, the possible maximal order of $N^{q_{1,i}+q_{2,i}}$ given by the $i$-th connected component of $T^\pi$ is $N$ when $q_{1,i}=0$ and $q_{2,i}=1$. This means that the edges of $\mcal {GDC}_i$ associated to Wigner matrices have multiplicity two and that $\overline{\mcal {GDC}}_i$ is a tree, i.e. the components of $T^\pi$ are of \emph{double tree type}. 

We have proved the first part of Lemma \ref{Lem:Dominating}: when $\omega^{(1)}_X(\pi)\neq 0$ (and in particular $ q_1\leq 0$) then $q_1+q_2\leq c$ with equality whenever $\pi \in  \DT_c$.

For the second part of the lemma, we use further arguments. We define below the property of \emph{$X$-connectedness}. Recall that $T^\pi_j$, $j=1\etc n$, denote the quotients of the cycles forming $T$, that we see as subgraphs of $T^\pi$. 

\begin{definition} 
\begin{itemize}
	\item We say that two edges $e$ and $e'$ of $T_X^\pi$ are \emph{twin} in $\mcal {GDC}(T^\pi)$ whenever they share the same pair of vertices: $\mrm{trg} \ e= \mrm{trg} \ e'$ and $ \mrm{src} \ e = \mrm{src} \ e'$, or $\mrm{trg} \ e = \mrm{src} \ e'$ and $ \mrm{src} \ e = \mrm{trg} \ e'$.
	\item We say that two graphs $T^\pi_j$ and $T^\pi_{j'}$ are \emph{$X$-connected} whenever there is an edge $e$ of $T^\pi_j$ and an edge $e'$ of $T^\pi_{j'}$ that are twin and associated to Wigner matrices. If $T^\pi_j$ and $T^\pi_j$ are not $X$-connected we say that they are \emph{$X$-disconnected}. 
\end{itemize}
\end{definition}

\begin{lemma}\label{Lem:Key}  If there exists an index $j_0$ such that  $T^\pi_{j_0}$ is $X$-disconnected from all the other graphs then $\omega^{(2)}_X(\pi)=0$. 
\end{lemma}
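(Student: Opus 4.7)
The plan is to exhibit $\omega^{(2)}_X(\pi)$ as the expectation of a centered product and then invoke independence.

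The first step is to recognize that the alternating-sign sum over $J\subset[n]$ in \eqref{Eq:OmegaX} is precisely the expansion of a product of centered variables. Concretely, fix an injective $\psi_0$ and for each $j \in [n]$ set
\[
Y_j \;=\; \prod_{e \in E^\pi_{j,x}} \sqrt{N}\, X_{\gamma^\pi_j(e)}\bigl(\psi_0(\mrm{trg}\,e),\psi_0(\mrm{src}\,e)\bigr).
\]
Expanding $\prod_{j=1}^n (Y_j - \E Y_j)$ and taking expectation reproduces \eqref{Eq:OmegaX}, so
\[
\omega^{(2)}_X(\pi) \;=\; \E\Bigl[\prod_{j=1}^n (Y_j - \E Y_j)\Bigr].
\]

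The second step is to translate the combinatorial hypothesis of $X$-disconnectedness into probabilistic independence. Because $\psi_0$ is injective, two Wigner-labeled edges $e,e'$ can give dependent entries only if they share their unordered pair of endpoints (i.e.\ are twin) \emph{and} carry the same Wigner matrix label; entries from distinct unordered index pairs or from distinct independent Wigner matrices are independent. The assumption that $T^\pi_{j_0}$ is $X$-disconnected from every other $T^\pi_j$ says that no Wigner-labeled edge of $T^\pi_{j_0}$ is twin with a Wigner-labeled edge of another component, a fortiori this holds for edges carrying the same matrix label. Hence the collection of Wigner entries entering $Y_{j_0}$ is disjoint, as a set of coordinates of the independent Wigner matrices, from the collection entering $\{Y_j : j \neq j_0\}$, so $Y_{j_0}$ is stochastically independent of the family $(Y_j)_{j \neq j_0}$.

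Finally, independence yields
\[
\omega^{(2)}_X(\pi) \;=\; \E[Y_{j_0} - \E Y_{j_0}] \cdot \E\Bigl[\prod_{j \neq j_0} (Y_j - \E Y_j)\Bigr] \;=\; 0,
\]
the first factor being zero by construction. The only nontrivial point in this argument is the second paragraph, i.e.\ being careful about the correspondence between graph-theoretic twinning and dependence of Wigner entries; everything else is bookkeeping from the definition of $\omega^{(2)}_X$.
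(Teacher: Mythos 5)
Your proof is correct and is essentially the same argument as the paper's: the paper splits the alternating sum over $J\subset[n]$ into the parts with $j_0\in J$ and $j_0\notin J$, factors $\E[r(T_{j_0})]$ out of the first part by independence, and observes that the two parts cancel; your observation that the alternating sum is nothing but $\E\big[\prod_j(Y_j-\E Y_j)\big]$ makes that cancellation transparent by factoring out $\E[Y_{j_0}-\E Y_{j_0}]=0$ directly. The translation of $X$-disconnectedness into stochastic independence of $Y_{j_0}$ from the remaining $Y_j$'s is exactly the point the paper invokes implicitly; you state it more explicitly, correctly noting that dependence between Wigner entries requires both a shared unordered endpoint pair (twinning) under the injective $\psi_0$ and the same matrix label.
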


\begin{proof} Assume that  $T^\pi_{j_0}$ is $X$-disconnected from all the other graphs. Using (\ref{Eq:OmegaX}), one has that 
\eqa\nonumber
	 		\omega^{(2)}_X(\pi)& = &\sum_{J\subset [n], j_0\in J} \E\Big[ r\big(  \cup_{j\in J} T_j, \psi \big) \Big]\ (-1)^{n-|J|} \prod_{\substack{j\notin J}}\E\Big[ r\big(   T_{j}, \psi_i\big) \Big]\cr
			&&+\sum_{J\subset [n], j_0\notin J} \E\Big[ r\big(  \cup_{j\in J} T_j, \psi \big) \Big]\ (-1)^{n-|J|} \prod_{\substack{j\notin J}}\E\Big[ r\big(   T_{j}, \psi_i\big) \Big].
\qea

 The independence of the entries of the Wigner matrices implies that we can factorize the expectation associated to the graph $T^\pi_{j_0}$ in the first sum yielding that  $\omega^{(2)}_X(\pi)=0$.
\end{proof}

\begin{lemma}\label{Lem:NoDoubleTree} For any $\pi \in \mcal P(V)$, if $T^\pi$ has a connected component of double tree type, i.e. such that $q_{1,i}=0$ and $q_{2,i}=1$, then $\omega^{(2)}_X(\pi)=0$.
\end{lemma}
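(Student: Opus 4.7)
The plan is to exhibit a single cycle $T_{j_0}$ that is $X$-disconnected from every other cycle $T_j$, after which Lemma \ref{Lem:Key} immediately yields $\omega^{(2)}_X(\pi)=0$. Fix an index $i$ such that the $i$-th component of $T^\pi$ is of double tree type, so that $\overline{\mcal {GDC}}_i$ is a tree and every group of twin Wigner edges of $\mcal {GDC}_i$ has multiplicity exactly $2$. Since each vertex of $V^\pi$ is incident to Wigner edges coming from its original cycle, $\mcal {GDC}_i$ necessarily contains at least one Wigner twin pair; let $T_{j_0}$ be any original cycle contributing an edge to such a pair. Because $T_{j_0}$ is connected, its entire image $T^\pi_{j_0}$ lies inside the $i$-th component of $T^\pi$, and in particular all Wigner edges of $T^\pi_{j_0}$ belong to $\mcal {GDC}_i$.

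The main step is the parity argument. Any Wigner twin pair $\bar e$ of $\mcal {GDC}_i$ is a cut edge of the tree $\overline{\mcal {GDC}}_i$; since contracting the two-edge connected subgraphs supported on deterministic edges does not change which Wigner edges are bridges, $\bar e$ is also a cut edge of the $i$-th component of $T^\pi$ itself. This component is a quotient of the union of the simple cycles $T_j$ whose image falls into it, so Lemma \ref{Lem:ParityArgument} applies and forces the multiplicity of $\bar e$ coming from each cycle $T_j$ to be even. Since the total multiplicity of $\bar e$ equals $2$, each Wigner twin pair of $\mcal {GDC}_i$ is contributed entirely by a single cycle.

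Combining these two ingredients, every Wigner edge of $T^\pi_{j_0}$ is twin only with other Wigner edges of $T^\pi_{j_0}$: within $\mcal {GDC}_i$ the parity argument rules out twinning with any other cycle, while outside $\mcal {GDC}_i$ no other cycle $T_j$ shares vertices with $T^\pi_{j_0}$, so no twinning can occur there either. Hence $T^\pi_{j_0}$ is $X$-disconnected from every other $T^\pi_j$, and Lemma \ref{Lem:Key} gives $\omega^{(2)}_X(\pi) = 0$. The only delicate step is the justification that a Wigner twin pair is a cut edge of the $i$-th component of $T^\pi$ itself, not merely of the contracted graph $\overline{\mcal {GDC}}_i$; once this is granted, the parity lemma and Lemma \ref{Lem:Key} do the rest mechanically.
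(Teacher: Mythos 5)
Your argument is correct and follows essentially the same route as the paper: apply Lemma~\ref{Lem:ParityArgument} to conclude each Wigner twin pair is contributed by a single cycle, deduce $X$-disconnectedness, and invoke Lemma~\ref{Lem:Key}. You are slightly more careful than the paper in explicitly verifying the hypothesis of Lemma~\ref{Lem:ParityArgument} --- namely that a Wigner twin pair of multiplicity two is a cut of the component $S$ of $T^\pi$ itself, not merely a bridge of the contracted tree $\overline{\mcal{GDC}}_i$ --- and this justification is sound: a walk in $S\setminus \bar e$ between the endpoints of $\bar e$ projects, by replacing each deterministic segment by the two incidence edges to the corresponding deterministic-component vertex, to a walk in $\mcal{GDC}_i\setminus\bar e$, and conversely; so the two connectivity statements are equivalent. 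This is a welcome expansion of a step the paper leaves implicit.
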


\begin{proof}
Assume that a connected component $S$ of $T^\pi$ is of double tree type. Lemma \ref{Lem:ParityArgument} implies that twin edges must come from a single cycle $T_i$, hence each graph $T^\pi_i \subset S$ is $X$-disconnected from all other graphs. Lemma \ref{Lem:Key} implies that $\omega^{(2)}_X(\pi)=0$. 
\end{proof}

By Lemma \ref{Lem:ParityArgument}, computing the asymptotic of $\tau^{(2)}$ we can then assume hereafter that for any $i=1\etc c$ one has $q_{1,i}+q_{2,i}<1$. The next possible order for $N^{q_{1,i}+q_{2,i}}$ is a priori $\sqrt N$, when $q_{1,i}=-\frac 1 2$ and $q_{2,i} = 1$. But this means that $\overline{\mcal {GDC}}_i$ is a tree and there is an edge of $\mcal {GDC}_i$ labeled $X$ of multiplicity 3, all other edges  labeled $X$ being of multiplicity 2. This is not possible by Lemma \ref{Lem:ParityArgument}.
Hence if $\omega^{(2)}_X(\pi)\neq 0$ we have as claimed
\eqa \label{Eq:q1q2}
			q_{1,i}+q_{2,i}\leq 0, \ \forall \, i=1\etc c.
\qea
The two cases of equality are when $(q_{1,i},q_{2,i}) = (0,0)$ and $(-1,1)$, which corresponds to the condition $\pi \in \DUFT$ (Definition \ref{Def:DUG}). This finishes the proof of Lemma \ref{Lem:Dominating}.

\subsubsection{Proof of Lemma 24}

We now take the quantity $q_{2,i}'$ under consideration and turn to the proof of Lemma \ref{Lem:ErrorTerms}. We say that a undirected graph is an \emph{Eulerian graph} if it is quotient of a union of simple cycles. In the sequel, a directed labeled graph is said Eulerian if the graph obtained by forgetting labels and edge orientation is Eulerian.

\begin{lemma}[Euler-Hierholzer theorem]\label{Euler-Hierholzer} A connected graph  $S$ is  \emph{Eulerian} if and only if the degree of each vertex is an even number. 
\end{lemma}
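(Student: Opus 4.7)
The plan is to establish both implications of the equivalence separately, treating the stated notion of \emph{Eulerian} (quotient of a disjoint union of simple cycles) directly through the classical Hierholzer construction, after forgetting edge orientation as prescribed.

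For the easy direction, I would argue as follows. Suppose $S = C^\pi$ is the quotient of a disjoint union $C = C_1 \sqcup \cdots \sqcup C_r$ of simple cycles by some partition $\pi$ of the vertex set of $C$. In each $C_i$, every vertex has (undirected) degree exactly $2$. Under the quotient, the degree of a vertex $B$ of $S$ is the sum over the preimages of $B$ in $C$ of the corresponding degrees in the cycles, i.e.\ twice the number of times the walks along $C_1, \dots, C_r$ pass through the class $B$. This sum is manifestly even, so every vertex of $S$ has even degree (loops being counted twice, as usual).

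For the nontrivial direction, assume $S$ is connected and every vertex has even degree, and I would construct a closed walk in $S$ that traverses each edge exactly once (an Eulerian circuit). Starting at any vertex $v_0$, greedily traverse unused edges; since each visited interior vertex has even degree, the local count of unused incident edges after an arrival is odd, so there is always an exit edge, and the walk can only terminate upon returning to $v_0$. This yields a closed walk $W_1$. If $W_1$ already uses every edge, we stop. Otherwise, by connectedness of $S$ and the fact that removing the edges of $W_1$ preserves evenness of all vertex degrees (each passage through a vertex removes two incident edges), there exists a vertex $u$ on $W_1$ incident to an unused edge; apply the same greedy procedure in the remaining subgraph starting at $u$ to obtain a closed walk $W_2$, and splice $W_2$ into $W_1$ at $u$. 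Iterating this Hierholzer splicing strictly increases the number of used edges each round and terminates with a single closed walk $W$ using every edge of $S$ exactly once. This $W$ is, by definition, a cyclic sequence of adjacent edges of $S$ of length $|E(S)|$, and therefore is the image of the simple cycle $C$ of length $|E(S)|$ under a vertex identification; that is, $S = C^\pi$ for a single simple cycle $C$, exhibiting $S$ as Eulerian in the paper's sense.

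The main subtlety, rather than obstacle, is to see that the classical existence of an Eulerian circuit is exactly equivalent to the paper's combinatorial definition as a quotient of simple cycles: one direction is witnessed by the single cycle $C$ above, and the other direction reduces to the elementary degree computation in the first paragraph. Directedness, loops, and multiple edges pose no issue since the definition explicitly forgets orientation and the Hierholzer argument handles parallel edges and loops without modification.
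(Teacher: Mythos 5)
The paper states this lemma as the classical Euler--Hierholzer theorem and gives no proof of its own; it is invoked as a known fact. Your proposal supplies the standard Hierholzer argument, and it is correct. The one point that genuinely needs care in this setting is the one you flag: the paper's definition of \emph{Eulerian} is ``quotient of a disjoint union of simple cycles'' (in the sense of Definition~\ref{Def:InjTrace}, where a quotient only identifies vertices while keeping the edge multiset), which is not the textbook phrasing in terms of Eulerian circuits. You handle this correctly in both directions: the forward direction reduces to the handshake-type degree count with loops counted twice, and the backward direction observes that an Eulerian circuit of length $|E(S)|$ is precisely a surjective map from the vertices of a single simple cycle $C$ of that length onto $V(S)$ inducing a bijection on edges, i.e.\ $S = C^\pi$ for the corresponding vertex partition $\pi$, which exhibits $S$ as a quotient of a (one-element) union of simple cycles. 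The Hierholzer greedy-plus-splicing construction and the parity bookkeeping (``after arriving at an interior vertex, the number of unused incident edges is odd'') are standard and correct, and they do work unchanged in the presence of loops and multi-edges. The only residual pedantry is the degenerate connected graph consisting of a single vertex and no edges, whose degree condition holds vacuously but which is not a quotient of a nonempty union of simple cycles; this case never arises in the paper (every component under consideration carries edges of $T$), so it is harmless, but a one-line convention would make the statement literally airtight.
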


We now prove the first part of Lemma \ref{Lem:ErrorTerms}. Let $\pi\in \mcal P(V)$ such that, in the $i$-th component $S$ of $T^\pi$, the edges labeled by Wigner matrices are of even multiplicity. Note that each vertex of $T^\pi_X$ has even degree. In the graph $T$, each vertex is adjacent to one edge labeled by a Wigner matrix and one edge labeled by a deterministic matrix. Hence any vertex of $S$ is adjacent to the same number of edge from one and the other family: the degree of a vertex in a deterministic component of $S$ equals its degree in $T^\pi_X$. By Euler-Hierholzer each deterministic component of $S$ is Eulerian. In particular it has no cutting edge and so $q_{i,2}'=0$, which proves the first part of the lemma. 

To prove second part of Lemma \ref{Lem:ErrorTerms}, we use the following notion.

\begin{definition}\label{Def:Pruning}
Given a connected component ${\mcal {GDC}}_i$ of $\mcal {GDC}$, we denote by $\mcal Prun(\mcal {GDC}_i)=(\widetilde {\mcal V}_i, \widetilde {\mcal E}_i)$ the undirected graph obtained by first suppressing the leaves of $\overline{\mcal {GDC}_i}$ and the edges incident to them, and then repeat this process until there is no leaf remaining.  We call  $\mcal Prun({\mcal {GDC}_i})$ the \emph{pruning} of  $ {\mcal {GDC}}_i$.
\end{definition}

\begin{figure}[t]
\leavevmode\kern-3em
\includegraphics{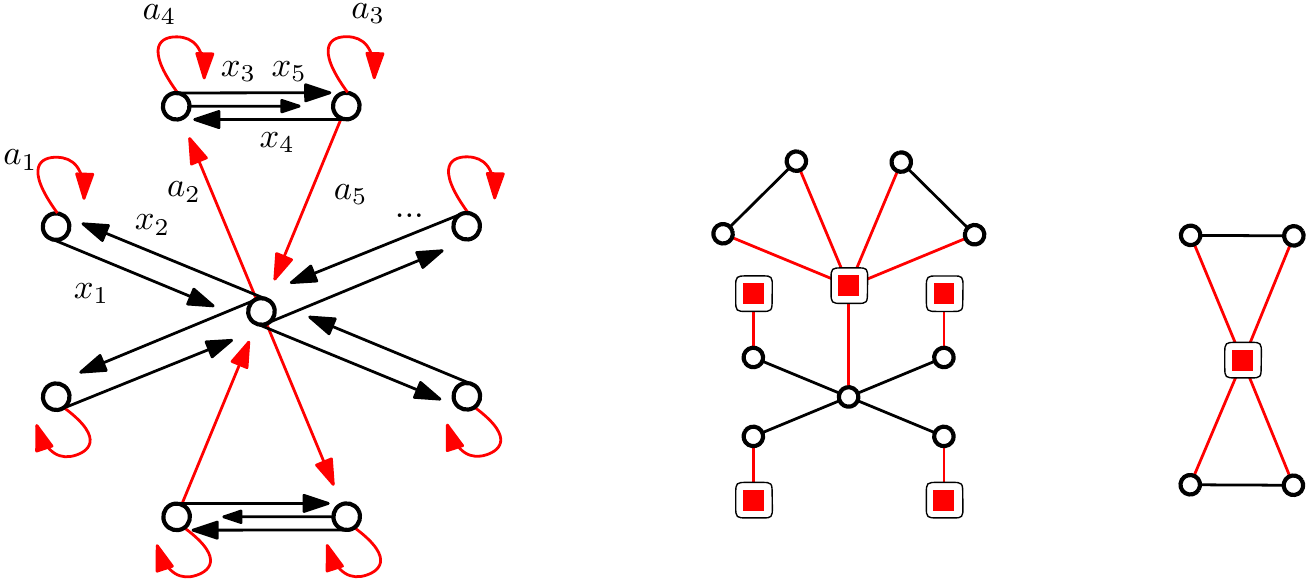}

\caption{Left: a labeled graph $T^\pi$ quotient of a cycle of length 28. Middle and right: the graph $\overline{\mcal {GDC}}$ of deterministic components with edge multiplicity forgotten and its pruning  $\mcal Prun({\mcal {GDC}})$. Note the subgraph $T^\pi_A$ (in red) has a connected component whose graph of t.e.c.c. have 4 leaves.
}
\label{Fig9:Pruning}
\end{figure}

If $\overline{\mcal {GDC}_i}$  is not a tree and its pruning $\mcal Prun({\mcal {GDC}_i})$ is a non trivial graph. Recall the notation $\mathfrak f(C)$ from Definition \ref{Def:TEC2} for the number of leaves in forest of two-edge connected components of $C$. 
\begin{lemma}\label{VariationParityArgu} Let $C$ be a deterministic component of $T^\pi$. 
\begin{enumerate}
	\item If $C$ is suppressed by the pruning process, it has no cutting edge.
	\item If $C$ is a vertex of $\mcal Prun({\mcal {GDC}_i})$, then $\mathfrak f (C)\leq \text{deg} (C)$, where $\mrm{deg}$ means the degree of the vertex in the graph $\mcal Prun( {\mcal {GDC}_i})$.
\end{enumerate}
\end{lemma}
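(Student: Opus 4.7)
Both parts will follow from a single \emph{key observation} relating cutting edges of a deterministic component $C$ to the cycle structure of $T$. Let $e$ be a cutting edge of $C$ with endpoints $u, v$. Since removing a single copy of $e$ disconnects $C$, the $A$-edge $e$ must have multiplicity one in the multigraph $T^\pi_A$, so it has a unique preimage $e_0$ in exactly one of the cycles $T_j$; then $T_j \setminus \{e_0\}$ is a path whose quotient projection is a walk $P_e$ from $u$ to $v$ in $T^\pi \setminus \{e\}$. Viewed in $\overline{\mcal {GDC}}(T^\pi)$, the walk $P_e$ combined with the two incidence edges $(u, C)$ and $(v, C)$ closes up to a walk through $C$, from which an Eulerian decomposition extracts a simple cycle through $C$.

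For part (1), I will argue by contraposition: assuming $C$ admits a cutting edge, the key observation places $C$ on a simple cycle of $\overline{\mcal {GDC}}(T^\pi)$, so $C$ belongs to the 2-core and survives the pruning process, contradicting the hypothesis that $C$ is suppressed.

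For part (2), the fact that $C$ is a vertex of $\mcal Prun({\mcal {GDC}_i})$ already gives $\deg(C) \geq 2$, which settles the case where $C$ has no cutting edge (so that $\mathfrak f(C) = 2$ by the convention of Definition \ref{Def:TEC2}). When $\mcal T\hspace{-2pt}ec(C)$ is a non-trivial tree with leaves $B_1, \dots, B_k$ (so $k = \mathfrak f(C)$), I attach to each $B_\ell$ its unique cutting edge $e_\ell$, with endpoints $u_\ell \in V(B_\ell)$ and $v_\ell \in V(C)\setminus V(B_\ell)$, and apply the key observation to obtain a walk $P_{e_\ell}$ from $u_\ell$ to $v_\ell$ in $T^\pi \setminus \{e_\ell\}$. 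Letting $w_\ell$ be the last vertex of $P_{e_\ell}$ that belongs to $V(B_\ell)$, I note that, $B_\ell$ being a leaf block, the only $A$-edge of $T^\pi$ with exactly one endpoint in $V(B_\ell)$ is $e_\ell$ itself, and $A$-edges of other deterministic components do not touch $V(B_\ell)$. The exit from $V(B_\ell)$ at $w_\ell$ must therefore be along a Wigner edge $(w_\ell, w'_\ell)$ with $w'_\ell \notin V(B_\ell)$; the remaining sub-walk from $w'_\ell$ to $v_\ell$ stays outside $V(B_\ell)$, so its image in $\overline{\mcal {GDC}}(T^\pi)$ uses no incidence edge $(v,C)$ with $v \in V(B_\ell)$, and in particular avoids $(w_\ell, C)$. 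Concatenating the Wigner edge $(w_\ell, w'_\ell)$, this sub-walk, and the incidence edge $(v_\ell, C)$ produces a second path from $w_\ell$ to $C$, edge-disjoint from the incidence edge $(w_\ell, C)$; hence $w_\ell$ lies on a simple cycle through $C$ and belongs to the 2-core. Since distinct leaf blocks have disjoint vertex sets, the vertices $w_1, \dots, w_k$ are pairwise distinct, each contributing a surviving incidence edge at $C$, and therefore $\deg(C) \geq k = \mathfrak f(C)$.

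The principal delicate point I expect is the bookkeeping between the multigraph walk in $T^\pi$ and its image in the simple graph $\overline{\mcal {GDC}}(T^\pi)$: one must verify that the edge-disjointness claims remain valid after collapsing parallel edges, and that the two incidence edges invoked at $C$ are genuinely distinct in $\overline{\mcal {GDC}}(T^\pi)$ (which holds here because $u \neq v$ and $w_\ell \neq v_\ell$). Once these bookkeeping points are handled, the quotient-walk construction from the cycle $T_j$ and the extraction of a simple cycle from the resulting closed walk are standard graph-theoretic manipulations.
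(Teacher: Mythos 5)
Your approach is genuinely different from the paper's. The paper never looks at cycles in $\overline{\mcal{GDC}}$: it works with the Euler--Hierholzer criterion applied directly to the component $S$ of $T^\pi$ (a quotient of simple cycles, hence with all even degrees), shows that the pruning of $\overline{\mcal {GDC}}_i$ can be tracked by a sequence of \emph{factors} $S^{(1)},\dots,S^{(max)}$ of $S$ (Lemma~\ref{EulerArg} shows factors of Eulerian graphs are Eulerian), deduces part (1) because the deterministic component removed by an $A$-step is such a factor, and proves part (2) by contradiction, producing a leaf $C'$ of $\mcal T\hspace{-2pt}ec(C)$ with no Wigner edge at its vertices and contradicting the even-degree constraint. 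Your argument instead exploits the unique preimage $e_0$ of a multiplicity-one cutting $A$-edge inside a single cycle $T_j$, and uses the quotient walk $P_e$ of $T_j\setminus\{e_0\}$ to exhibit a cycle in $\overline{\mcal {GDC}}$ through the relevant incidence edges at $C$; this replaces the global parity/factor machinery by a local, constructive 2-core argument. Your part (2) argument is sound: the \emph{last exit} vertex $w_\ell$ of $P_{e_\ell}$ from $V(B_\ell)$ leaves $B_\ell$ along an $X$-edge (the unique $A$-edge out of $B_\ell$ is $e_\ell$, which is absent), the remaining sub-walk never returns to $V(B_\ell)$ and hence cannot use $(w_\ell,C)$, extracting a simple path from the concatenated walk and closing it with $(w_\ell,C)$ gives a genuine cycle (of length $\geq 3$, since $(w_\ell,C)$ is the only edge between $w_\ell$ and $C$), and the vertex-disjointness of 2-edge-connected components makes the $w_\ell$ distinct. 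This directly yields $\deg(C)\geq\mathfrak f(C)$ without the paper's contradiction step, and in fact gives the finer information of which incidence edges at $C$ survive.

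Your part (1) argument, however, as stated has a gap. You close $P_e$ up with both incidence edges $(u,C)$ and $(v,C)$ and claim an Eulerian decomposition of the resulting closed walk yields a simple cycle through $C$. This is not automatic: $P_e$ may itself re-enter $C$, and if it happens to use $(u,C)$ and $(v,C)$ with odd parity (and all other incidence edges at $C$ with even parity), every incidence edge at $C$ has even multiplicity in the closed walk, and a cycle decomposition of that multigraph can pair each incidence edge with itself, producing no simple cycle of length $\geq 3$ through $C$. The fix is exactly the ``last exit'' device you already use in part (2): instead of the symmetric closing, let $C_u$ be the side of $C\setminus\{e\}$ containing $u$, take $w$ the last vertex of $P_e$ in $C_u$, observe that the edge of $P_e$ leaving $w$ is an $X$-edge (the only $A$-edge out of $C_u$ inside $C$ is $e$), and that the tail of $P_e$ never returns to $C_u$ and so avoids $(w,C)$; closing with $(w,C)$ then gives a genuine simple cycle through $C$. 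With that repair, part (1) is simply the $\mathfrak f(C)\geq 2$ case of your part (2) argument, and the whole proof goes through. In effect you replace the paper's Eulerian factorization lemma by a single constructive ``last-exit'' lemma, at the price of being a bit more careful about the translation between walks in the multigraph $T^\pi$ and paths in the simple graph $\overline{\mcal {GDC}}(T^\pi)$.
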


Taking for granted Lemma \ref{VariationParityArgu} momentarily, we conclude the proof of Lemma  \ref{Lem:ErrorTerms}: assume that $\overline{\mcal {GDC}_i}$  is not a tree and let us prove that necessarily $q_{2,i}+q_{2,i}'\leq 0$.  Note that when pruning a graph we suppress one edge for each leaf, so we have
	\eq
		 q_{2,i} = \big( |\mcal V_i| - |\bar{\mcal E}_i| \big) =  \big( |\widetilde {\mcal V}_i| - |\widetilde {\mcal E}_i| \big).
	\qe
Moreover, as for $\mcal {GDC}_i$, the graph $\mcal Prun({\mcal {GDC}}_i)$ has vertices of two kinds, according to Definition \ref{Def:GCC}. We denote by $\widetilde {\mcal V}_{X,i}$ the set of vertices coming from the vertex set of $T^\pi$, and by $\widetilde {\mcal V}_{A,i}$ the set of vertices coming from the connected components of $T^\pi_A$.  Recall the classical formula in graph theory $|\widetilde {\mcal E}_i|= \sum_{v \in \widetilde {\mcal V}_i} \frac{\mrm{deg}\, v}2$, where $\mrm{deg}\, v$ is the number of neighbours of $v$ in $\mcal Prun({\mcal {GDC}_i})$.  We hence get
	\eq
		\lefteqn{q_{2,i}+q_{2,i}' =   \big( |\widetilde {\mcal V}_i| - |\widetilde {\mcal E}_i| \big) +  \Big( \frac{\mathfrak f (T^\pi_A)}2 - |C_{A}| \Big)}\\
		& = & \sum_{v\in \widetilde {\mcal V}_{X,i}} \Big( 1- \frac{\mrm{deg}\, v}2 \Big)  + \sum_{C\in \widetilde {\mcal V}_{A,i}}  \Big(1-\frac{\mrm{deg}\, C}2 \Big) + \sum_{C\in C_{A,i}} \Big(  \frac{\mathfrak f (C)}2 -1 \Big) .
	\qe
In the above formula, $C_{A,i}$ denotes the set of connected components of $T^\pi_A$ which belongs to $\mcal {GDC}_i$. The first part of Lemma \ref{VariationParityArgu} implies that the deterministic components $C$ that are erased by the pruning process satisfies $\mathfrak f(C)=2$. Hence in the right hand side of the above formula the last sum can be restricted to the sum over $C\in \widetilde {\mcal V}_{A,i}$
\eq
		q_{2,i}+q_{2,i}'= \sum_{v\in \widetilde {\mcal V}_{X,i}} \Big( 1- \frac{\mrm{deg}\, v}2 \Big) + \sum_{C\in \widetilde {\mcal V}_{A,i}} \Big(  \frac{\mathfrak f (C)}2 -\frac{\mrm{deg}\, C}2 \Big).
\qe
 Since $\mcal Prun({\mcal {GDC}}_i)$ has no leaves we have $\mrm{deg}\, v\geq 2$ for any $v\in \widetilde {\mcal V}_{X,i}$, and the second part of Lemma \ref{VariationParityArgu} states that $ \mathfrak f (C)  \leq  \mrm{deg}\, C$. This proves that $q_{2,i}+q_{2,i}'\leq 0$ and finishes the proof of Lemma \ref{Lem:ErrorTerms} - provided Lemma \ref{VariationParityArgu} holds true. $\square$

We now turn to the proof of Lemma \ref{VariationParityArgu}, using the following notions.

\begin{definition}\label{cutting vertex} Let $S=(V,E)$ be a connected graph. We say that $\omega\in V$ is a \emph{cutting vertex} of $S$ if there is a partition $V=V'\sqcup \{\omega\} \sqcup V''$, $V',V''\neq \emptyset$, such that there are no edge between a vertex of $V'$ and a vertex of $V''$. Denoting by $S'$ the subgraph of $S$ obtained by removing the vertices of $V''$ and the edges attached to it, we say that $S'$ is a \emph{factor} of $S$ with \emph{base} $\omega$. 
\end{definition}

\begin{lemma}\label{EulerArg} If $S$ is Eulerian, each factor of $S$ is Eulerian.
\end{lemma}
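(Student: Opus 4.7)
The plan is to use the Euler--Hierholzer characterization (Lemma \ref{Euler-Hierholzer}): it suffices to show that $S'$ is connected and that every vertex of $S'$ has even degree. Write $V=V'\sqcup\{\omega\}\sqcup V''$ as in Definition \ref{cutting vertex}, and let $S''$ denote the subgraph of $S$ induced on $V''\cup\{\omega\}$ (symmetric to $S'$).

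First I would verify that $S'$ is connected. Given $v\in V'$, pick any path from $v$ to $\omega$ in $S$; since there are no edges between $V'$ and $V''$, the first time this path enters $V''$ it must pass through $\omega$, so truncating the path at its first visit to $\omega$ yields a path from $v$ to $\omega$ lying entirely inside $V'\cup\{\omega\}$. This shows $S'$ is connected.

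Next I would check the degree parities. For any $v\in V'$, every edge of $S$ incident to $v$ has its other endpoint in $V'\cup\{\omega\}$ (otherwise $V'$ and $V''$ would be joined), so $\deg_{S'}(v)=\deg_S(v)$, which is even by Euler--Hierholzer applied to $S$. For $\omega$, decompose its degree in $S$ as
\[
\deg_S(\omega)=d_1+d_2+2\ell,
\]
where $d_1$ counts edges from $\omega$ to $V'$, $d_2$ counts edges from $\omega$ to $V''$, and $\ell$ is the number of loops at $\omega$. The key step is to show $d_1$ is even, which I would obtain by the handshaking lemma applied to $S''$: the sum $\sum_{w\in V''\cup\{\omega\}}\deg_{S''}(w)=2|E(S'')|$ is even, each $\deg_{S''}(w)=\deg_S(w)$ for $w\in V''$ is even, so $\deg_{S''}(\omega)=d_2+2\ell$ must be even, hence $d_2$ is even. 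Since $\deg_S(\omega)=d_1+d_2+2\ell$ is also even, it follows that $d_1$ is even, and therefore $\deg_{S'}(\omega)=d_1+2\ell$ is even.

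Combining the two points with Lemma \ref{Euler-Hierholzer} yields that $S'$ is Eulerian. The main obstacle is really just the parity argument at the base vertex $\omega$, which is dealt with by the handshaking trick above; the rest is immediate from the definition of a cutting vertex.
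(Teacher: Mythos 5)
Your proof is correct and uses essentially the same strategy as the paper: reduce to the Euler--Hierholzer characterization and verify even degrees by a handshaking/parity count. The paper's argument is slightly more direct — it applies the handshaking identity $\deg_{S'}(\omega)=2|E'|-\sum_{v\neq\omega}\deg_{S'}(v)$ to $S'$ itself rather than detouring through $S''$ — while you additionally spell out the connectivity of $S'$ and the role of loops at $\omega$, both of which the paper leaves implicit.
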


\begin{proof} Let $S'=(V',E')$ be a factor of $S$ with base $\omega$. By Euler-Hierholzer theorem, all vertices $v\neq \omega$ have even degree in $S$. Since $S'$ is factor, they have even degree in $S'$. Moreover, the formula $|E'|=\sum_{v\in V'}  \frac{\mrm{deg}\, v}2$ implies $  \mrm{deg}\, \omega  = 2 |E'| - \sum_{v \neq \omega}  \mrm{deg}\, v$ so $\mrm{deg}\, \omega $ is even. Hence $S'$ is Eulerian.
\end{proof}

\begin{proof}[Proof of Lemma \ref{VariationParityArgu}]
Let $S$ denote the $i$-th component of $T^\pi$. To prove the first part of the lemma we decompose the pruning process of $\overline{\mcal{GDC}_i}$, and construct a sequence of factors $S^{(1)},S^{(2)} \etc S^{(m)}=S^{(max)}$ of $S$ as follows. Note first that each suppression of a leaf in the pruning process either removes 
\begin{itemize}
	\item {(\it $A$-step)} a deterministic component $C$ and an associated edge $(v,C)$, for some $v$ of $T^\pi_X$;
	\item {(\it $X$-step)} or a vertex $v$ of $T^\pi_X$ and the edge adjacent to it.
\end{itemize}

Assume the first leaf suppression is an $A$-step that removes the vertex $C$ and the edge $(v,C)$ in $\overline{\mcal {GDC}_i}$. We then set $S^{(1)}$ the graph obtained from $S$ by removing all edges of $C$ and the vertices that remain isolated after this removal. Necessarily $v$ is a cutting vertex so $C$ and $S^{(1)}$ are factors of $S$. Lemma \ref{EulerArg} implies that $C$ is Eulerian, and so has no cutting edge. If the first leaf suppression is an $X$-step that removes the vertex $v$, we set $S^{(1)}$ the graph obtained from $S$ by removing the vertex $v$ and its adjacent edge. It is also a factor of $S^{(0)}$. 

We pursue this construction with $S^{(1)}$ instead of $S$, getting iteratively a sequence of factors $S^{(1)},S^{(2)}, \dots S^{(m)}=S^{(max)}$. This shows inductively that the deterministic components removed by the pruning process have no cutting edge.  We have proved the first part of Lemma \ref{VariationParityArgu}.

We now prove the second part of Lemma \ref{VariationParityArgu}. Note for the sequel that by Lemma \ref{EulerArg} the last factor $S^{(max)}$ of $S$ is an Eulerian graph. 
We now assume that $C$ is a deterministic component that is a vertex of $\mcal Prun({\mcal {GDC}_i})$, i.e. it has not been removed by the pruning process. It is a deterministic component of $S^{(max)}$, but it is not a factor. We show that its degree in $\mcal Prun({\mcal {GDC}_i})$ is not smaller than $\mathfrak f( C)$. Since $\mcal Prun({\mcal {GDC}_i})$ has no leaves, then $\mrm{deg} \, C\geq 2$, hence the property is obvious if $\mathfrak f(C)=2$. Assume from now that $\mathfrak f(C)\geq 3$ and, for a contradiction, that $\mrm{deg} \, C<\mathfrak f(C)$. 

Let $C'$ be a two-edge connected component of $C$, that is a leaf in the tree $\mcal T\hspace{-2pt}ec(C)$ of two-edge connected components of $C$. Since $C'$ is a leaf of $\mcal T\hspace{-2pt}ec(C)$, it is a factor of $C$ or a group of self-loops based on a vertex $\omega$ adjacent to a cutting edge. Since $\mrm{deg} \, C<\mathfrak f(C)$ we can find such a $C'$ that has no vertex adjacent to an edge labeled by a Wigner matrix in $S^{(max)}$. Thus no vertex of $C'\setminus\{\omega\}$ is adjacent to an edge of $S^{(max)}\setminus C'.$ This means that either $C'$ is a factor of $S^{(max)}$ or $C'$ is a group of self-loops.  Since $S^{(max)}$ is Eulerian, then $C'$ is Eulerian: all vertices of $C'$ have even degree in $C'$. But in $S^{(max)}$ the vertex $\omega$ is also  adjacent to a cutting edge, so its degree in $S^{(max)}$ is its degree in $C'$ increased by one: $S^{(max)}$ has a vertex of odd degree, which is in contradiction with Euler-Hierholzer theorem. This concludes the proof of Lemma \ref{VariationParityArgu}
\end{proof}

Proving Lemma \ref{VariationParityArgu} validates the proof of Lemma \ref{Lem:ErrorTerms}, and then the proof of the main result of this section, namely Proposition \ref{prop:topanalys}.

\section{Characterization of the limit}\label{Sec:CharLim}

\subsection{Asymptotic formula}
In this section, we only consider the asymptotics of $\tau^{(2)}$ and do not consider again the statistics $\tau^{(1)}$ of first order. We denote simply $\omega_X$ for the weight $\omega_X^{(2)}$ associated to Wigner matrices of Definition \ref{Def:OmegaX}.

Let $\mbf Z$ be a collection of Gaussian random variables, and assume that $\mbf Z$ is stable by complex conjugate, that is $Z\in \mbf Z$ implies $\overline{Z} \in \mbf Z$. 
We recall that $\mbf Z$ is Gaussian whenever it satisfies Wick formula: for any $n\geq 2$ and any $Z_1\etc Z_n\in \mbf Z$:
\eqa\label{Eq:Wick} \E[Z_1 \cdots Z_n] = \sum_{\sigma \in
  \mcal P_2(n)} \prod_{\{i,j\}\in \sigma } \E[Z_iZ_j] ,
\qea where $\mcal P_2(n)$ denotes the set of pairings
of $[n]$, i.e. partitions whose blocks all have size two. In above formula, the product is over all blocks $\{i,j\}$ of the pairing $\sigma$.

We prove that $\tau^{(2)}$ is asymptotically Gaussian and give an asymptotic formula for the second order distribution in the following Proposition.

\begin{proposition}\label{Prop:AsypGauss} 
For any pair $\{i,j\}$ of indices, we denote by $ \mcal P(i ,j)$ the set of partitions $\pi$ of the vertex set of $T_i\sqcup T_j$ with the following properties: 
\begin{enumerate}
	\item Either $(T_i\sqcup T_j)^\pi$ is of double unicyclic type, and both graphs $\overline{\mcal {GDC}}(T^\pi_i)$ and $\overline{\mcal {GDC}}(T^\pi_j)$ are unicyclic. Hence the two latter cycles correspond to the same number $K\geq 1$ of edges associated to Wigner matrices (possibly intertwined with edges associated to deterministic matrices) and the cycle of $\overline{\mcal {GDC}}\big((T_i\sqcup T_j)^\pi\big)$ is then obtained by twining pair-wise these $K$ edges of these cycles.
	\item Either $(T_i\sqcup T_j)^\pi$ is of 2-4 tree type, and both graphs $T^\pi_i$ and $ T^\pi_j$ are of double tree type. A pair $\bar e_i$ of twin edges of $T^\pi_{X,i}$ and a pair $\bar e_j$ of twin edges of $T^\pi_{X,j}$ are then twined to form a group of edges of multiplicity 4 in $(T_i\sqcup T_j)^\pi$.
	\item In each case, twin edges associated to Wigner matrices in $(T_i\sqcup T_j)^\pi$ are associated to a same Wigner matrix.
\end{enumerate}

Then we have
	$$\tau^{(2)} =\sum_{\sigma \in
  \mcal P_2(n)} \prod_{\{i,j\}\in \sigma } \tau_{con}^{(2)}(i,j)+o(1),$$
 with
 	\eqa\label{Eq:TauCon}
		 \tau_{con}^{(2)}(i,j) =  \sum_{ \substack{\pi \in \mcal P(i ,j)}}\omega_X(\pi) \omega_A(\pi).
	\qea
The contributions $\omega_X=\omega_X^{(2)}$ and $\omega_A(\pi)$ are computed by taking $T = T_i\sqcup T_j$ in Definitions \ref{Def:OmegaX} and \ref{Def:TEC}.
\end{proposition}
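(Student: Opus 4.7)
The plan is to start from the asymptotic identity of Proposition \ref{prop:topanalys},
\[
\tau^{(2)} = \sum_{\pi \in \DUFT} \omega_X^{(2)}(\pi)\, \omega_A(\pi) + o(1),
\]
and show that the surviving partitions $\pi$ are precisely those whose $\mcal{GDC}$ decomposes into connected components each involving exactly two of the cycles $T_1, \dots, T_n$, glued according to a non-mixing rule on the Wigner labels. Once this structural fact is in place, the sum factorises over pair partitions $\sigma \in \mcal P_2(n)$, and Wick's formula \eqref{Eq:Wick} is matched term by term.

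First I would establish the \emph{pairing of cycles}: for any $\pi \in \DUFT$, each connected component of $T^\pi$ contains vertices of exactly two of the graphs $T_1, \dots, T_n$. The upper bound ``at most two'' will come from the structural constraint that in either the double unicyclic or $2$-$4$ tree type, the $\overline{\mcal{GDC}}$-component is obtained from two double trees by introducing a single cycle or a single fourfold edge. More precisely, starting from the fact that twin Wigner edges form either a pair (in the double unicyclic case) or a pair plus one fourfold group (in the $2$-$4$ tree case), the parity argument of Lemma \ref{Lem:ParityArgument} says each twin pair either lies entirely in a single $T_i$ (in which case it acts as a tree-type ``internal'' pair within that $T_i$) or links two distinct cycles. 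Combining with the $X$-disconnectedness criterion of Lemma \ref{Lem:Key}, a component involving three or more cycles would decompose further, yielding a non-valid type or vanishing contribution. The lower bound ``at least two'' is exactly Lemma \ref{Lem:Key}, which forbids the $T_j$-isolated situation in any non-vanishing $\omega_X^{(2)}$-contribution.

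Next I would check the \emph{non-mixing of Wigner labels}: for every pair $\bar e$ of twin edges associated to Wigner matrices, all edges of $\bar e$ must carry the same label in $\mbf X$. This is immediate from independence and centering of the entries of distinct Wigner matrices once we reinterpret $\omega_X^{(2)}(\pi)$ via the Wick-type formula \eqref{Eq:OmegaX}: a group of twin edges coming from two distinct independent Wigner matrices produces a factor $\E[z_{\ell}]\E[z_{\ell'}\cdots] = 0$. Consequently any $\pi$ surviving in the sum identifies, via its twin-edge structure, a pairing $\sigma \in \mcal P_2(n)$ on the index set $[n]$. Conversely, every $\pi \in \mcal P(i,j)$ as in the proposition furnishes a valid partition.

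Finally, I would combine these two observations to reorganise the sum over $\pi \in \DUFT$ as
\[
\tau^{(2)} = \sum_{\sigma \in \mcal P_2(n)} \sum_{(\pi_{ij})_{\{i,j\}\in\sigma}} \prod_{\{i,j\}\in \sigma} \omega_X(\pi_{ij})\, \omega_A(\pi_{ij}) + o(1),
\]
where each $\pi_{ij}$ ranges over $\mcal P(i,j)$. Since $T = T_1 \sqcup \cdots \sqcup T_n$ and both $\omega_X$ and $\omega_A$ multiply over the connected components of $T^\pi$ (the former by independence of distinct Wigner matrices, the latter directly from Definitions \ref{Def:OmegaX} and the expression \eqref{Eq:OmegaA}), the double sum factorises into $\prod_{\{i,j\}\in\sigma} \tau^{(2)}_{\mathrm{con}}(i,j)$, with $\tau^{(2)}_{\mathrm{con}}(i,j)$ as in \eqref{Eq:TauCon}. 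When $n$ is odd, $\mcal P_2(n)$ is empty and the right-hand side vanishes, consistently with the parity argument which shows no valid $\pi$ exists.

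The main obstacle I expect is the rigorous verification that each component of $T^\pi$ involves exactly two cycles of $T$. The ``at least two'' direction is a clean consequence of Lemma \ref{Lem:Key}, but the ``at most two'' direction requires a careful case analysis: one must show that in both the double unicyclic and the $2$-$4$ tree types, adding a third cycle $T_k$ either forces a sub-component to be an isolated double tree (contradicting Lemma \ref{Lem:NoDoubleTree} via Lemma \ref{Lem:Key}) or breaks the Eulerian structure underlying the pruning analysis of Lemma \ref{VariationParityArgu}, thereby violating $(q_{1,i}, q_{2,i}) \in \{(0,0),(-1,1)\}$. This step is essentially a refinement of the topological analysis of Section \ref{Sec:Tightness} to the two-cycle setting, and the parity argument of Lemma \ref{Lem:ParityArgument} will again do most of the work.
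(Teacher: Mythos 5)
Your overall architecture matches the paper's: start from Proposition~\ref{prop:topanalys}, show each surviving connected component of $T^\pi$ links exactly two of the cycles $T_1,\dots,T_n$, use independence/centeredness to get the non-mixing rule, and factor the double sum into a Wick-type product over $\sigma\in\mcal P_2(n)$. However, there are two genuine gaps.

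First, you assert that $\omega_A$ ``multiplies over the connected components of $T^\pi$\dots directly from\dots \eqref{Eq:OmegaA}.'' That is false as stated. The injective trace $\Tr^0[T_A^\pi]$ does \emph{not} factor over the connected components of $T_A^\pi$, because the injectivity constraint couples the components (all indices across all components must be pairwise distinct). The normalization $N^{-\mathfrak f(T_A^\pi)/2}$ is additive over components, but $\Tr^0[\,\cdot\,]$ is not multiplicative. The paper deals with this by proving a separate Lemma~\ref{Lem:SeparateCC}: for a valid $\pi$ one has
\[
\frac{1}{N^{|C_A|}}\Tr^0[T_A^\pi]=\prod_{C\in C_A}\frac{1}{N}\Tr^0[C]+o(1),
\]
which relies on the Mingo--Speicher bounds of \cite{MS11} to show that the ``diagonal'' corrections lose at least one power of $N$. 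Without that lemma the factorization $\tau^{(2)}=\sum_\sigma\prod_{B\in\sigma}(\cdots)+o(1)$ does not follow.

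Second, the ``at most two cycles per component'' step — which you correctly identify as the main obstacle — is left as a gesture. You point to Lemma~\ref{Lem:ParityArgument} and Lemma~\ref{Lem:Key} and predict that the parity argument ``will do most of the work,'' but the case of the double unicyclic type is not dispatched by these lemmas alone. The paper introduces an additional combinatorial fact (Lemma~\ref{Lem:PetiteArete}) precisely here: in a double unicyclic component with $\omega_X(\pi)\neq 0$, \emph{each} group of twin Wigner edges lying on the cycle of $\overline{\mcal{GDC}}_i$ must consist of one edge from $T_j$ and one from $T_{j'}$ for a single fixed pair $j\neq j'$, and all twin pairs off the cycle are internal to a single $T_k$ (hence, by Lemma~\ref{Lem:Key}, $\omega_X(\pi)=0$ unless exactly two cycles contribute). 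The proof of that lemma is not just a reapplication of the parity argument; it requires removing a pair of cycle edges, checking the residual graph is still Eulerian, and then applying the parity argument to what remains, plus a separate argument to rule out the ``all twin pairs internal to a single $T_j$'' case. This also establishes the precise structural description of $\mcal P(i,j)$ in item~(1) of the proposition (both $\overline{\mcal{GDC}}(T_i^\pi)$ and $\overline{\mcal{GDC}}(T_j^\pi)$ are unicyclic with matching cycle lengths), which your proposal does not derive. The $2$-$4$ tree case is indeed easier, and there your reasoning (only the multiplicity-$4$ group can bridge two cycles, so any further $T_k$ is $X$-isolated) is correct and matches the paper.
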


The notation $\tau_{con}^{(2)}$ serves to emphasize that the quotient graph is connected. 
The rest of the subsection is devoted to the proof of Proposition \ref{Prop:AsypGauss}. Let $\pi$ be a valid partition. 
We denote by $\sigma =\sigma(\pi)$ the partition of $[n]$ such that $B=\{\ell_1\etc \ell_p\}$ is a block of $\sigma$ whenever there is a connected component of $T^\pi$ formed by these graphs $T^\pi_{\ell_1} \etc T^\pi_{\ell_p}$. By the $X$-connectedness criterion of Lemma \ref{Lem:Key}, if $\omega_X(\pi)\neq 0$ then $\sigma$ has no singletons.

By independence of the Wigner matrices and of the entries, we have $\omega_X(\pi) =  \prod_{ B \in \sigma}\omega_X(\pi_B)$, where $\omega_X(\pi_B)$ is defined by taking $T =\sqcup_{j\in B} T_j$ in Definition \ref{Def:OmegaX}. The similar property for $\omega_A(\pi)$ up to a negligible error term follows from the next lemma.

\begin{lemma}\label{Lem:SeparateCC} Let $\pi \in \mcal P(V)$ be a partition such that the components of $T^\pi_A$ are two-edge connected. Recalling that $C_A=C_A(\pi)$ is the set of connected components of $T^\pi_A$, we then have
		$$\frac 1 {N^{| C_A|}}\Tr^0 [T^\pi_A] = \prod_{ C \in C_A} \frac 1 N \Tr^0[C]+o(1).$$ 
\end{lemma}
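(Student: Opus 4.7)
The plan is to expand the product $\prod_{C\in C_A}\Tr^0[C]$ as a sum of injective-trace contributions indexed by the possible patterns of coincidences between labels of vertices lying in distinct connected components of $T^\pi_A$, and then to control every non-trivial pattern by the sharp $\mathfrak f$-bound already used in the proof of boundedness of $\omega_A$.

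Concretely, let $\mcal R$ denote the (finite) set of partitions $\rho$ of the vertex set $V(T^\pi_A)$ with the property that every block of $\rho$ meets each connected component $C\in C_A$ in at most one vertex. A map $\psi:V(T^\pi_A)\to[N]$ is injective on each $C$ precisely when $\ker\psi\in \mcal R$, so expanding each factor gives
\[
\prod_{C\in C_A}\Tr^0[C]=\sum_{\rho\in\mcal R}\Tr^0\bigl[T^{\pi,\rho}_A\bigr],
\]
where $T^{\pi,\rho}_A$ denotes the further quotient of $T^\pi_A$ by $\rho$. The trivial partition $\mbf 0\in\mcal R$ (all singletons) contributes exactly $\Tr^0[T^\pi_A]$, so the lemma reduces to proving
\[
N^{-|C_A|}\sum_{\rho\in\mcal R,\ \rho\neq\mbf 0}\Tr^0\bigl[T^{\pi,\rho}_A\bigr]=o(1).
\]

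The core combinatorial step is the inequality $\mathfrak f(T^{\pi,\rho}_A)\leq 2(|C_A|-1)$ for every $\rho\neq\mbf 0$. To establish it I would first check the edge-connectivity fact that gluing two two-edge connected graphs $G_1,G_2$ at a single vertex (or at several vertices) yields again a two-edge connected graph: any edge $e\in G_i$ satisfies $G_i\setminus e$ connected, and this subgraph remains attached to the rest through the identified vertices. Iterating this observation, every connected component of $T^{\pi,\rho}_A$ is itself a vertex-gluing of two-edge connected components of $T^\pi_A$ and is therefore two-edge connected; consequently it contributes $\mathfrak f=2$ under the trivial t.e.c.\ convention. Finally, any $\rho\neq\mbf 0$ induces at least one cross-component identification, so the number of connected components of $T^{\pi,\rho}_A$ is at most $|C_A|-1$, which yields the inequality.

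Once the inequality is in hand, the \cite{MS11} estimate invoked in the boundedness argument for $\omega_A$ gives
\[
\bigl|\Tr^0[T^{\pi,\rho}_A]\bigr|=O\bigl(N^{\mathfrak f(T^{\pi,\rho}_A)/2}\bigr)=O\bigl(N^{|C_A|-1}\bigr)
\]
for $\rho\neq\mbf 0$, uniformly in $N$. Since $\mcal R$ is finite, summing these finitely many contributions and normalising by $N^{|C_A|}$ produces an $O(1/N)$ error, which is exactly the $o(1)$ asserted. I expect the only delicate point to be the clean verification that two-edge connectivity is preserved under iterated or simultaneous identifications between the same pair of components; this is a direct edge-connectivity argument but deserves explicit care in the write-up.
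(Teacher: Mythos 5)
Your proposal is correct and follows essentially the same route as the paper: expand the product of injective traces as a sum over cross-component coincidence patterns, isolate the trivial pattern as $\Tr^0[T^\pi_A]$, and bound every nontrivial pattern by $O(N^{|C_A|-1})$ via the Mingo--Speicher $\mathfrak f$-bound. The one point you spell out more carefully than the paper --- that vertex-gluing of two-edge-connected graphs is again two-edge-connected, so each merged component contributes exactly $2$ to $\mathfrak f$ --- is indeed the correct justification for the step the paper abbreviates to ``since the graphs are two-edge connected.''
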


\begin{proof} We write $\pi\leq \pi'$ to mean that the blocks of $\pi$ are included in the blocks of $\pi'$, and so $T^{\pi'}$ is a quotient of $T^\pi$. With notations as in the lemma, we have
	\eq
		\frac 1 {N^{| C_A|}}\Tr^0 [T^\pi_A] = \frac 1 {N^{| C_A|}} \sum_{\substack{\pi \leq \pi'  \in \mcal P(V) \\ \mrm{injective} }} r(T_A, \pi')
	\qe
and
	\eq
		\prod_{ C \in C_A} \frac 1 N \Tr^0[C] = \frac 1 {N^{| C_A|}} \prod_{ C \in C_A} \sum_{\substack{\pi'   \in \mcal P(V_C) \\ \mrm{injective} }} r(C_A, \pi')
	\qe
Hence
	$$
	\frac 1 {N^{| C_A|}}\Tr^0 [T^\pi_A] - \prod_{ C \in C_A} \frac 1 N \Tr^0[C] =- \frac 1 {N^{| C_A|}} \sum_{\pi' } r(T_A, \pi')$$
where the sum is over all $\pi\leq \pi' \in \mcal P(V)$, whose restriction on each connected component of $T^\pi_A$ is injective, but which is not injective. Hence such a choice reduces the number of connected components. Since the graphs are two-edge connected, with the bound of Mingo and Speicher from \cite{MS11} we deduce that  $\sum_{\pi' } r(T_A, \pi') = O(N^{| C_A|-1})$.
\end{proof}

From \eqref{Eq:ConclusionPartIbis}, by additivity of the topological parameters and by asymptotic multiplicativity of the $\omega$-functions we can then deduce an asymptotic factorization with respect to connected components
	\eqa\label{Eq:ProofCovCumul}
		\tau^{(2)} =\sum_{\sigma \in
  \mcal P(n)} \prod_{ B\in \sigma } \sum_{\substack{ \pi \in \mcal P(\sqcup_{j\in B}V_j) \\ \mrm{valid, \ connected}}} \omega_X(\pi) \omega_A(\pi)+o(1),
 	\qea
where the $\omega$-functions are defined by taking $T =\sqcup_{j\in B} T_j$ in Definitions \ref{Def:OmegaX} and \ref{Def:TEC}, and we can assume $|B|\geq 2$ for all blocks $B$ of $\sigma$. 

\begin{lemma}\label{Lem:PetiteArete}Let $\pi$ be a valid partition such that $\omega_X(\pi)\neq 0$ and assume that  the $i$-th connected component of $T^\pi$ of double cycle type. Then, there are two different cycles $T_j, T_{j'}$, $j\neq j' \in [n]$, such that each a group of twin edges of $T^\pi_X$ in the cycle of $\overline{\mcal {GDC}_i}$ consists of an edge from $T_j$ and an edge from $T_{j'}$.
\end{lemma}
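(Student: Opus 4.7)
The plan is to combine a flow-parity argument along the unique cycle $C$ of $\overline{\mcal{GDC}_i}$ with the $X$-connectedness criterion of Lemma~\ref{Lem:Key}. For each $X$-edge $f$ of $\overline{\mcal{GDC}_i}$, I write $c_j(f)\in\{0,1,2\}$ for the number of twins of $f$ in $T^\pi_X$ that originate from the original cycle $T_j$; since every such twin pair has multiplicity two, $\sum_{j=1}^n c_j(f)=2$.

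First I would dispatch the bridge $X$-edges. If $f$ is not on $C$, then $f$ is a bridge of $\overline{\mcal{GDC}_i}$, so the removal of its twin pair from $T^\pi_i$ disconnects the component. Lemma~\ref{Lem:ParityArgument} then forces each $c_j(f)$ to be even, and since the total is $2$, exactly one $T_j$ supplies both twins of $f$. Such an $f$ is \emph{monochromatic}.

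Next I would treat the $X$-edges on $C$. Each $T_j$ is a closed walk in $T^\pi_i$; after contracting every deterministic component of $T^\pi_i$ to a point one obtains a closed walk in the unicyclic graph $\overline{\mcal{GDC}_i}$, which therefore admits a well-defined winding number $w_j\in\mathbb Z$ around $C$. A signed-flow conservation argument at each vertex of $C$ (excursions into tree branches being neutral, as they enter and leave the same cycle vertex) should give that for every $X$-edge $f$ on $C$, the parity of $c_j(f)$ equals the parity of $w_j$, independently of $f$. Combined with the constraint $\sum_j c_j(f)=2$, this forces one of two scenarios: (a) every $w_j$ is even, and each $X$-edge $f$ on $C$ is monochromatic; or (b) exactly two indices $j_1\neq j_2$ have odd winding, and $c_{j_1}(f)=c_{j_2}(f)=1$ with $c_j(f)=0$ for every other $j$, \emph{for every} $X$-edge $f$ on $C$. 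The crucial point is that the parities of the $w_j$ do not depend on $f$, so in (b) the pair $\{j_1,j_2\}$ is the same for every cycle edge; this is exactly the conclusion of the lemma.

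Finally I would rule out (a): if it held, then combined with the first step every $X$-edge of $\overline{\mcal{GDC}_i}$ would be monochromatic, so no two distinct subgraphs $T^\pi_j, T^\pi_{j'}$ in the component would share a twin $X$-edge. Each $T^\pi_j$ in the component would then be $X$-disconnected from all the others, and Lemma~\ref{Lem:Key} would force $\omega_X(\pi)=0$, contradicting the hypothesis. Hence (b) must hold. The principal obstacle is the parity claim along $C$: because $C$ may freely alternate between $X$-edges and vertex-to-$C_A$ edges, and because each $T_j$ only reaches $C$ after the contraction of deterministic components, the signed-flow balance must be set up carefully at each vertex of $C$, verifying that tree branches off $C$ and the internal structure of the $A$-components do not perturb the cycle flow. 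Once this is in place, the dichotomy of Step~2 and the $X$-disconnectedness argument of Step~3 conclude the proof in a direct fashion.
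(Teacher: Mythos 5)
Your proof is correct, and while it reaches the same destination as the paper's it takes a genuinely different route in the central parity step. The paper establishes the uniformity of the split along the cycle of $\overline{\mcal{GDC}_i}$ by a local trick: fix a group $\bar e\in E_0$ with $c_j(\bar e)=1$, and for every other group $\bar e'\in E_0$ observe that if $T_j$ contributed nothing to $\bar e'$, then removing $\bar e'$ leaves the cycle $T_j$ intact inside $T^\pi\setminus\bar e'$ while promoting $\bar e$ to a cut-group; Lemma~\ref{Lem:ParityArgument} then forces $c_j(\bar e)$ to be even, a contradiction. Hence $c_j(\bar e')\ge 1$ for every $\bar e'\in E_0$ (and likewise for the other contributor $j'$ to $\bar e$), and $\sum_k c_k(\bar e')=2$ pins the split $\{j,j'\}$ for every group on the cycle. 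Your approach instead extracts a single global invariant — the winding number $w_j$ of the contracted closed walk $T_j$ around the unique cycle $C$ of $\overline{\mcal{GDC}_i}$ — and shows $c_j(\bar e)\equiv w_j\pmod 2$ uniformly in $\bar e$, after which the dichotomy is immediate from $\sum_j c_j(\bar e)=2$. Both proofs handle the off-cycle (bridge) groups identically via Lemma~\ref{Lem:ParityArgument}, and both exclude the all-monochromatic scenario with Lemma~\ref{Lem:Key}. The detail you flag as the ``principal obstacle'' — that the parity of the traversal count of $T_j$ across a cycle edge is independent of the edge, with tree excursions and the internal structure of the deterministic components being neutral — is exactly the statement that the $1$-chain of a closed walk in a unicyclic graph assigns the same signed coefficient to every edge of the unique cycle (an $H_1$-argument), and it is sound. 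Your route buys a cleaner, one-invariant-per-cycle picture; the paper's is more economical in that it applies the already-available parity lemma once more instead of introducing a new invariant.
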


\begin{proof} Denote by $E_0$ the set of groups of twin edges labeled by Wigner matrices in the cycle of $\overline{\mcal {GDC}_i}$. 
Assume that a cycle $T_j$ has exactly one edge labeled by a Wigner matrix in a group $\bar e \in E_0$ of adjacent edges. For a contradiction, assume moreover that there is another group edges $\bar e'\in E_0$ in the cycle that comes from others cycle $T_{j'}, T_{j''}$, $j'\neq j\neq j''$. We denote by $T^\pi\setminus{\bar e'}$ the graph obtained from $T^\pi$ by removing the two edges of $\bar e'$. Since the removal does disconnect the graph nor change the parity of vertices, $T^\pi\setminus{\bar e'}$ is an Eulerian graph. The removal of the edges of $\bar e$ disconnects $T^\pi\setminus{\bar e'}$. The fact that a single edge of $T_j$ belongs to $\bar e$ is hence in contradiction with the parity argument of Lemma \ref{Lem:ParityArgument}. This implies that all $T_j$ has one edge in element of $E_0$, and so another graph $T_{j'}$ has the same property.

To finish the proof of lemma, we now assume that each group of edges in $E_0$ comes from a single cycle. Since all group of edges labeled by Wigner matrices out of the cycle are of multiplicty 2, Lemma \ref{Lem:ParityArgument} implies that each group of twin edges of the $i$-th component of $T^\pi$ labeled by Wigner matrices come from a cycle. The $X$-connectedness criterion (Lemma \ref{Lem:Key}) hence implies $\omega_X(\pi)=0$. \end{proof}

We can now finish the proof of Proposition \ref{Prop:AsypGauss}.
Assume for $\pi$ valid that a component $S$ of $T^\pi$ is made of at least 3 graphs among $T^\pi_1\etc T^\pi_n$ and let us prove that $\omega_X(\pi)=0$.
\begin{itemize}
	\item Assume $S$ is of 2-4 tree type. By the parity argument, only the edge labeled by Wigner matrices of multiplicity 4 can come from two graphs, so at least one graph is $X$-disconnected from all other graphs and  Lemma \ref{Lem:Key} implies $\omega_X(\pi)=0$. 
	\item Assume $S$ is of double unicyclic type. By Lemma \ref{Lem:PetiteArete}, only the edge labeled by Wigner matrices that belongs to the groups in the cycle of $\overline{\mcal {GDC}}(S)$ can come from two graphs, and the same conclusion holds.
\end{itemize}

Hence in Formula \eqref{Eq:ProofCovCumul} we can restrict the sum to pairings $\sigma\in \mcal P_2(n)$. That the only pairings $\sigma$ for which $\omega_X(\pi)\neq 0$ are those such that the blocks $\{i,j\}$ of $\sigma$ are in $\mcal P(i,j)$ is a consequence of same arguments of $X$-connectedness as before. Finally, by independence of the matrices and the centeredness in the definition of $\omega_X$, we have $\omega_X(\pi)=0$ when there exist twin edges associated to different Wigner matrices. This finishes  the proof of Proposition \ref{Prop:AsypGauss}.

\subsection{Computation of the covariance: even moments and vanishing pseudo-variance}\label{EvenMomNullPV}

By Proposition \ref{Prop:AsypGauss}, in the sequel we can assume $n=2$, so $T=T_1\sqcup T_2$ is the union of 2 simple cycles. We shall compute the limit of $\tau^{(2)}_{con}(1,2)$ defined in Equation \eqref{Eq:TauCon} and prove the covariance formulas of Theorems \ref{MainTh2} and \ref{MainTh4}. We use the shorthand $\tau^{(2)}_{con}:=\tau^{(2)}_{con}(1,2)$.

By Proposition \ref{Prop:AsypGauss}  and  Lemma \ref{Lem:SeparateCC}, we have
	\eqa\label{MainAsymptoticFormula}
		\tau_{con}^{(2)}=  \sum_{ \pi\in \mcal P(1,2)}\omega_X(\pi) \prod_{ C \in C_A} \frac 1 N \Tr^0[C]+o(1),
	\qea
where $\pi\in \mcal P(1,2)$ means that $T^\pi$ is of double unicyclic or 2-4 tree type graph as in Definition \ref{Def:DUG}, and $C_A=C_A(\pi)$ denotes the set of connected components of $T^\pi_A$. We denote by $2m$ and $2n$ the lengths of the cycles $T_1$ and $T_2$ respectively (they are even since the letters $X_i$ and $A_i$ alternated in Definition  22 of the matrices $M_j$). By a simple parity argument for the number of double edges coming from $T_1$ and $T_2$, if $m$ and $n$ do not have the same parity, there is no $\pi\in \mcal P(V^\pi)$ such that $T^\pi$ is of double unicyclic or 2-4 tree type, and so $\tau_{con}^{(2)}\limN 0$. 

In this section, we assume that $m$ and $n$ are even. Assume that $T^\pi$ is of double unicyclic type graph. We claim that the double cycle of $\mcal {GDC}(T^\pi)$ has an even number $2\ell$ of double edges labeled by Wigner matrices, for $\ell\geq 1$.  Indeed, each $T_i$ has an even total number of edges. It has also an even number of edges out of the cycle and there is exactly one edge of each $T_i$ for each double edge of the cycle. We use in the sequel the idea that a 2-4 tree type graph is a degenerated version of a double unicyclic type graph with $2\ell=2$. This consideration is important latter since the expression \eqref{MainAsymptoticFormula} depends on the injective traces $ \Tr^0[C]$ whereas we want an expression in terms of the parameters of the deterministic matrices, i.e. normalized traces of entry-wise products. 

\subsubsection{Computation of weights $\omega_X$}

\begin{definition} \begin{enumerate}
	\item We say that two twin edges $e_1$ and $e_2$ are opposite if the source of $e_1$ is the target of $e_2$ and reciprocally, and that they are parallel if they have same source and same target. 
	\item Let $\pi \in \mcal P(1,2) $ such that  $T^\pi $ is of double unicyclic type. We say that $T^\pi$ is of opposite type if all twin edges of $T^\pi_X$ are opposite, and of parallel type otherwise.
	\end{enumerate}
\end{definition}

For a parallel type graph, twin edges labeled by a Wigner matrix on the double cycle are all parallel, twin edges outside the double cycle are always opposite.

We compute easily the value of $\omega_X(\pi)$ from its definition in Eq. \eqref{Eq:OmegaX}. 
\begin{itemize}
	\item Let $\mcal {DU}^{opp}$ denote the set of partitions $\pi\in \mcal P(1,2)$ such that $T^\pi$ is of opposite double unicyclic type. Since all entries of Wigner matrices have normalized variance, for $\pi \in \mcal {DU}^{opp}$ we have
	\eqa\label{Eq:W1}
		\omega_X(\pi)=1.
	\qea
	\item Let $\mcal {FT}$ denote the set of $\pi\in \mcal P(1,2)$ such that $T^\pi$ is of 2-4 tree type. Then $\pi \in \mcal {FT}$ implies
	\eqa\label{Eq:W2}
		\omega_X(\pi)=\E[|x^{(\pi)}_{12}|^4]-1,
	\qea
 where $X^{(\pi)}=\big( \frac{x^{(\pi)}_{ij}}{\sqrt N}\big)_{ij}$ stands for the Wigner matrix associated to the edge of multiplicity 4 in $T^\pi_X$. 
 	\item Let $\mcal {DU}^{par}$ the set of partitions $\pi\in\mcal P(1,2)$ such that  $T^\pi$ is of parallel double cyclic type. For all $\pi \in \mcal {DU}^{par}$ we have
 	\eqa\label{Eq:W3}
		\omega_X(\pi)=\theta(\pi):=\prod_{k=1}^K\E[{x^{(i_k)}_{12}}^2],
	\qea
 where $X^{(i_k)}=\big( \frac{x^{(i_k)}_{ij}}{\sqrt N}\big)_{ij}$, $k=1\etc K$, stands for the Wigner matrices associated to the double cycle.
\end{itemize}
 
From now and until the rest of Section \ref{EvenMomNullPV}, we also assume that the Wigner matrices have null pseudo-variance. 
Hence $\theta(\pi)$ in \eqref{Eq:W3} vanishes and we get from \eqref{MainAsymptoticFormula}	
\eq
		\tau^{(2)}_{con} &=& 
		 \sum_{ \pi \in \mcal {DU}^{opp}} \bigg( \prod_{ C \in C_A(\pi)} \frac 1 N \Tr^0[C]\bigg)\\
		 && +  \sum_{ \pi \in \mcal {FT}} \bigg(\big( \E[|x^{(\pi)}_{12}|^4]-1 \big) \prod_{ C \in C_A(\pi)} \frac 1 N \Tr^0[C] \bigg)+o(1).
\qe

The aim is now to see how appear the parameters of the deterministic components. For that given a partition $\pi$ we associate a graph $G^\pi_X$ whose edges are labeled by the Wigner matrices and a graph $G^\pi_A$ labeled by the deterministic matrices. Recall that $T^\pi_X$ (resp. $T^{\pi}_A$) is the subgraph of $T^{\pi}$ whose edges are labeled by the Wigner (resp. deterministic) matrices.

\begin{definition}\label{SkullCompl} Let $\pi \in \mcal P(1,2)$. We denote $G^\pi_X$ the graph labeled by Wigner matrices obtained from $T^\pi$ by contracting the edges labeled by deterministic matrices to vertex: we identify the source and the target for each of those edges and we remove them. We denote $G^\pi_A$  the graph labeled by deterministic matrices that is the smallest quotient $T^{\pi_0}_A$ such that $G^\pi_X = G^{\pi_0}_X$. 
\end{definition}

Let $\mcal G=\mcal G(n,m)$ denote the set of all possible graphs $G^\pi_X$ for $\pi\in \mcal P(i,j)$. We also set
\eq
	\mcal G^{opp} & = & \{ G^\pi_X \in \mcal G \, | \, \pi \in \mcal {DU}^{opp}\}.
\qe

\subsubsection{Opposite type, first case}
 For any $\ell\geq 1$, we denote by $ \mcal {DU}^{opp}_{\ell}$ (resp. $\mcal G^{opp}_\ell)$ the set of partitions $\pi \in \mcal {DU}^{opp}$ (of graphs $G^\pi_X$) such that there are $\ell$ double edges labeled by Wigner matrices in the double cycle.

\begin{figure}[t]
\includegraphics{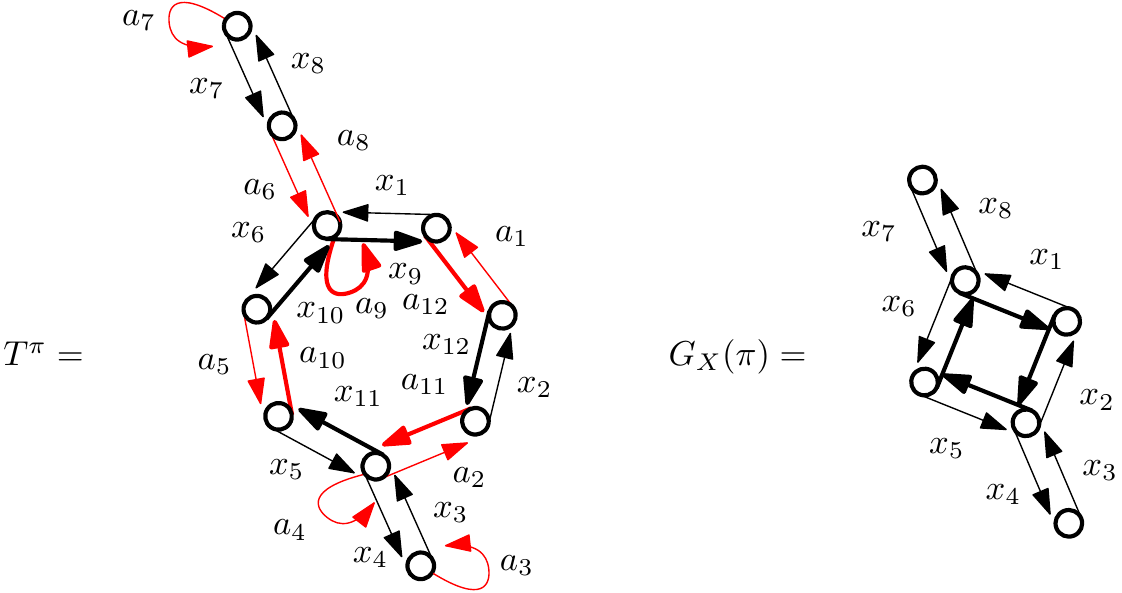}

\caption{Left: a labeled graph $T^\pi$ such that $\pi \in   \mcal {DU}^{opp}_{4}$. Right: the graph $G^\pi_X$. }
\label{fig_14_OppGrand.pdf}
\end{figure}

\begin{figure}[t]
\includegraphics{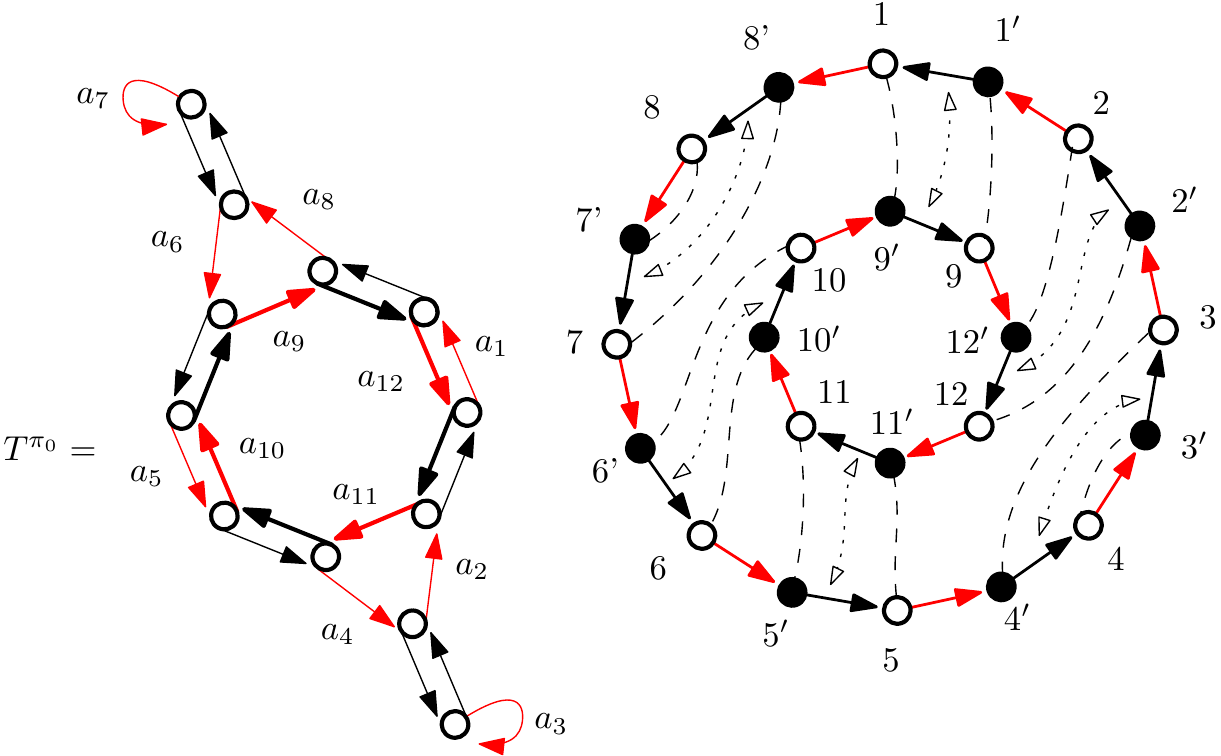}

\caption{Left: The graph $T^{\pi_0}$ for the minimal partition $\pi_0$ such that $G_X(\pi_0) = G^\pi_X$ where $\pi$ is as in Figure \ref{fig_14_OppGrand.pdf}. Note that  $G^\pi_A$ consists of the union of the components of $T^{\pi_0}$ labeled by deterministic matrices. Right: the annulus $\mcal Ann^{opp}$, with dashed lines to represent to identifications of vertices made by $\pi_0$, and dotted lines with arrows to represent the partition $\sigma_{G^\pi_X}$.}
\label{fig_15_OppGrand2.pdf}
\end{figure}

Let $G \in \mcal G^{opp}_{2\ell}$ for $\ell\geq 2$. All partitions $\pi$ such that $G^\pi_X=G$ have same graph $K(G):=G^\pi_A$ that we call the \emph{complement} of $G$. The connected components of this graph $K(G)$ are simple oriented cycles $C_1\etc C_k$. Each partition $\pi\in \mcal P(1,2)$ such that $G^\pi_A = K(G)$ corresponds to a $k$-tuple $(\pi_1\etc \pi_k)$ where $\pi_i$ is a partition of $C_i$. The relation between trace and injective trace of graphs implies (with $V_{C_i}$ the vertex set of the $i$-th cycle $C_i$ of $K(G)$)
	\eq
		\sum_{ \substack{\pi \in  \mcal {DU}^{opp}_{2\ell} \\ \mrm{s.t. \ } G^\pi_X=G} } \prod_{ C \in C_A(\pi)} \frac 1 N \Tr^0[C]  &=& \prod_{i=1}^k \sum_{\pi_i\in \mcal P(V_{C_i})}   \frac 1 N \Tr^0[C_i^\pi]
		=  \prod_{i=1}^k \frac 1 N \Tr[C_i].
	\qe

We now associate to any graph $G \in \mcal G^{opp}_{2\ell}$ a pairing $\sigma_G \in NC_2^{(2\ell)}(m,n)$, where we recall that $m$ and $n$ are the lengths of $T_1$ and $T_2$ respectively. 

\begin{definition}\label{NC1} We denote by $\mcal Ann^{opp}$ the labeled graph $T$ embedded in $\mbb C$ that consists of the annulus formed by the outer cycle $T_1$ in anticlockwise orientation, and the inner cycle $T_2$ in clockwise orientation. For any $G\in  \mcal {G}^{opp}$, we set $\sigma_G$ the pairing of the edge set of $\mcal Ann^{opp}$ labeled by Wigner matrices such that two edges belong to a same block if and only if they are twined in $G$.
\end{definition}

By contracting the $A$-edges of $\mcal Ann^{opp}$, we see $\sigma_G$ as an annulus partition of $(m,n)$ elements represented by the edges labeled $X_1\etc X_{m+n}$. Recall that $\sigma \in NC_2^{(2\ell)}(m,n)$ is non-mixing if twin edges are always labeled by a same Wigner matrix.

\begin{lemma}
The pairing  $\sigma_G$ is non crossing, and the function $G \mapsto \sigma_G$ defines a bijection from $\mcal G^{opp}_{2\ell}$ to the set of non-mixing elements of $NC_2^{(2\ell)}(m,n)$.
\end{lemma}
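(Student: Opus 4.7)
The plan is to verify four properties of $\sigma_G$ in order---well-definedness as a pairing, the correct number of through strings, non-crossing, non-mixing---and then construct an explicit inverse to establish the bijection. First, since $T^\pi$ is of double unicyclic type, every edge of $\mcal{GDC}(T^\pi)$ labelled by a Wigner matrix has multiplicity exactly two; therefore the \emph{twinning} relation on the Wigner-edges of $T$ (inherited from the identifications made when quotienting by $\pi$) gives a genuine pairing of the $m+n$ edges labelled $X$ in $\mcal{Ann}^{opp}$. By the definition of $\mcal{DU}^{opp}_{2\ell}$, the double cycle of $\mcal{GDC}$ carries exactly $2\ell$ pairs of Wigner edges, and the parity argument (Lemma \ref{Lem:ParityArgument}) together with Lemma \ref{Lem:PetiteArete} forces each such pair to consist of one edge from $T_1$ and one from $T_2$; these are exactly the through strings of $\sigma_G$, so $\sigma_G \in NC_2^{(2\ell)}(m,n)$ provided non-crossingness holds. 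Non-mixing is built in by item (3) of Proposition \ref{Prop:AsypGauss}, which is automatic here because otherwise $\omega_X(\pi)=0$ by independence of distinct Wigner matrices.

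Next I would establish the non-crossing property, which is the geometric core of the statement. The idea is that $\sigma_G$ admits a drawing in the annulus whose chords do not cross. For the $2\ell$ through strings, the fact that $\overline{\mcal{GDC}}(T^\pi)$ has a single cycle, combined with the prescribed cyclic orders on the outer and inner boundaries of $\mcal{Ann}^{opp}$, forces the through edges of $T_1$ and $T_2$ to alternate compatibly around the two circles. For the non-through pairs, each connected component of $T^\pi_X$ cut off by the through strings is a double tree in $\mcal{GDC}$ sitting in a single annular region between two consecutive through strings (on the outer or inner boundary), and double trees are in standard bijection with non-crossing pairings on a disc (see \cite[\S 1.1.1]{Gui06}). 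Concretely, I would show by induction on the total number of Wigner edges that any attempt to pair two Wigner edges in a crossing manner either contradicts the uniqueness of the cycle in $\overline{\mcal{GDC}}$ or violates the ``opposite'' hypothesis. The \emph{opposite} assumption is exactly what pins down the orientation of each chord so that the cyclic order of paired endpoints agrees with the annular non-crossing convention.

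Finally, I would construct an inverse map explicitly to obtain the bijection. Given a non-mixing $\sigma \in NC_2^{(2\ell)}(m,n)$, identify pairs of Wigner edges of $\mcal{Ann}^{opp}$ according to $\sigma$, matching source-to-target and target-to-source (the \emph{opposite} convention), and let $\pi_\sigma$ be the partition of the vertex set of $T$ induced by the vertex identifications forced by these edge identifications. One checks that this $\pi_\sigma$ lies in $\mcal{DU}^{opp}_{2\ell}$, that $G^{\pi_\sigma}_X$ is well-defined, and that $\sigma_{G^{\pi_\sigma}_X}=\sigma$; conversely, starting from $G \in \mcal{G}^{opp}_{2\ell}$, the partition $\pi_{\sigma_G}$ coincides with the minimal $\pi_0$ of Definition \ref{SkullCompl}, showing $G = G^{\pi_{\sigma_G}}_X$. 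The main obstacle I anticipate is the non-crossing verification in the second paragraph, which is delicate because it requires carefully tracking how the global planarity of $\mcal{Ann}^{opp}$ constrains simultaneously the through strings on the main cycle and the pairings within each hanging double-tree region; the rest reduces to routine bookkeeping on quotients of annular graphs.
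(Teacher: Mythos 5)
Your proposal is correct and takes essentially the same approach as the paper: the same ingredients appear (multiplicity-two Wigner edges giving a genuine pairing, the $2\ell$ through strings arising from the double cycle of $\overline{\mcal{GDC}}$ via Lemma \ref{Lem:PetiteArete}, the double-tree structure of off-cycle pieces encoding the non-crossing nesting, non-mixing from item (3) of Proposition \ref{Prop:AsypGauss}, and the inverse map obtained by identifying edges of $\mcal{Ann}^{opp}$ according to $\sigma$). The only presentational difference is that the paper checks non-crossingness by invoking the recursive spoke-diagram/consecutive-block characterization of \cite{NS06}, Remark 9.2 together with the pruning process, whereas you argue it geometrically via annular planarity; both hinge on the same combinatorial facts and neither argument is fully spelled out.
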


\begin{proof} Let $G=G_X(\pi)\in \mcal G^{opp}_{2\ell}$. If each edge labeled by a Wigner matrix in $T_{1}$ is twinned in $T^\pi$ with an edge of $T_{2}$ (so $\overline{\mcal{GDC}}$ is a cycle), then $\sigma_G$ is a \emph{spoke diagram}: necessarily $n=m$ and there is an index $j_0$ such that the $j$-th edge of $T_{1,X}$ is twinned in $T^\pi$ with the $j+j_0$-th edge of $T_{2,X}$, for any $j$ with notations modulo $n$. Hence $\sigma_G$ is a non crossing pairing.

Otherwise, they are edges labeled by Wigner matrices in $T_{1}$ or in $T_{2}$ that are twinned together, and so $T^\pi$ has double tree type factor. Recall that an annular partition of  is non-crossing if and only if either it is a spoke diagram, or it has a block of the form $\{i,i+1\etc i+j\}$ in $[n]$ or $[m]$, and the removal of this blocks yields another non-crossing partition \cite{NS06}, Remark 9.2. The pruning process shows that this nesting property is satisfied by $\sigma_G$.

Hence every $G\in G^{opp}_{2\ell}$ yields an element of $NC_2^{(2\ell)}(m,n)$. It is non mixing by the third condition in the definition of $\mcal P(i,j)$ (Proposition \ref{Prop:AsypGauss}). Reciprocally, if $\sigma$ is a pairing in $NC_2^{(2\ell)}(m,n)$, it gives a way to identify pairwise the edges of $\mcal Ann^{opp}$, and if $\sigma = \sigma_G$ then contracting the $A$-edges yields correctly the graph $G$. 
\end{proof}

Let $G=G^\pi_X \in  \mcal G^{opp}_{2\ell}$ and  $K(G) = G^\pi_A$ its complement introduced above. We consider the partition of the edge set of $\mcal Ann^{opp}$ labeled by deterministic matrices such that two edges belong to a same block if and only if they belong to a same cycle of $G^\pi_A$. Then this partition is actually the Kreweras complement $K(\sigma_G)$ of $\sigma_G$. Moreover, denoting by $C_1\etc C_k$ the cycles of $K(G) =G^\pi_A$, then
	$$ \prod_{i=1}^k \frac 1 N \Tr[C_i] \limN \phi_{ K(\sigma_G)}(a_1 \etc a_{m+n}),$$
where $\phi_{ K(\sigma)}$ is as in the statement of Theorem \ref{MainTh2}.  
We hence have proved for $\ell\geq 2$
	\eq
		\sum_{ G \in  \mcal G^{opp}_{2\ell}} \sum_{ \substack{\pi \in  \mcal {DU}^{opp}_{2\ell} \\ \mrm{s.t. \ } G^\pi_X=G} } \prod_{ C \in C_A} \frac 1 N \Tr^0[C] \limN \sum_{\substack{ \sigma \in  NC_{2}^{(2\ell)}(m,n)
\\  \mrm{non-mixing}
 }} \phi_{ K(\sigma)}(a_1 \etc a_{m+n}).
	\qe

\subsubsection{4-2 type}
We set $\mcal G^{FT}$ the set of graphs $G$ of the form $G=G^\pi_X$ where $\pi \in \mcal {FT}$, i.e. $T^\pi$ is of 4-2 tree type, and fix $G \in \mcal G^{FT}$.

\begin{figure}[t]
\includegraphics{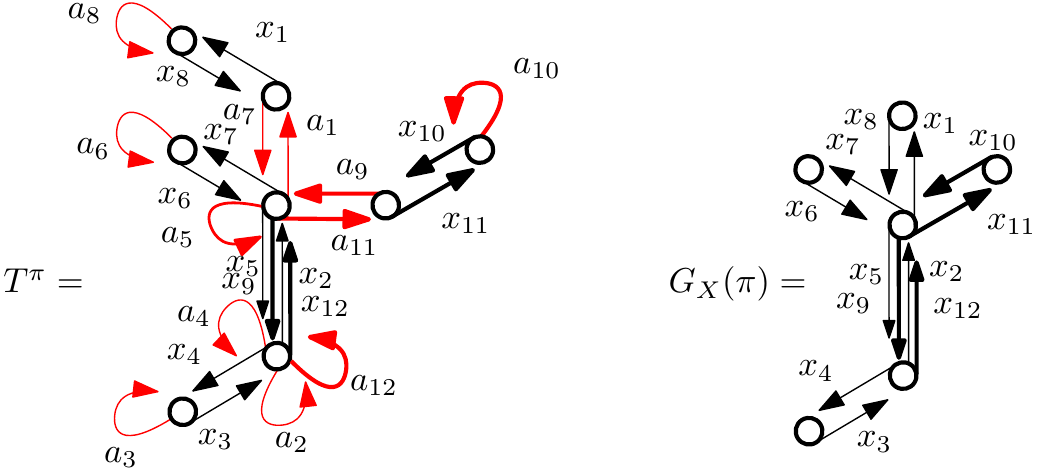}

\caption{Left: a labeled graph $T^\pi$ such that $\pi \in   \mcal T^{42}$. Right: the graph $G^\pi_X$. }
\label{fig_16_42.pdf}
\end{figure}

\begin{figure}[t]
\includegraphics{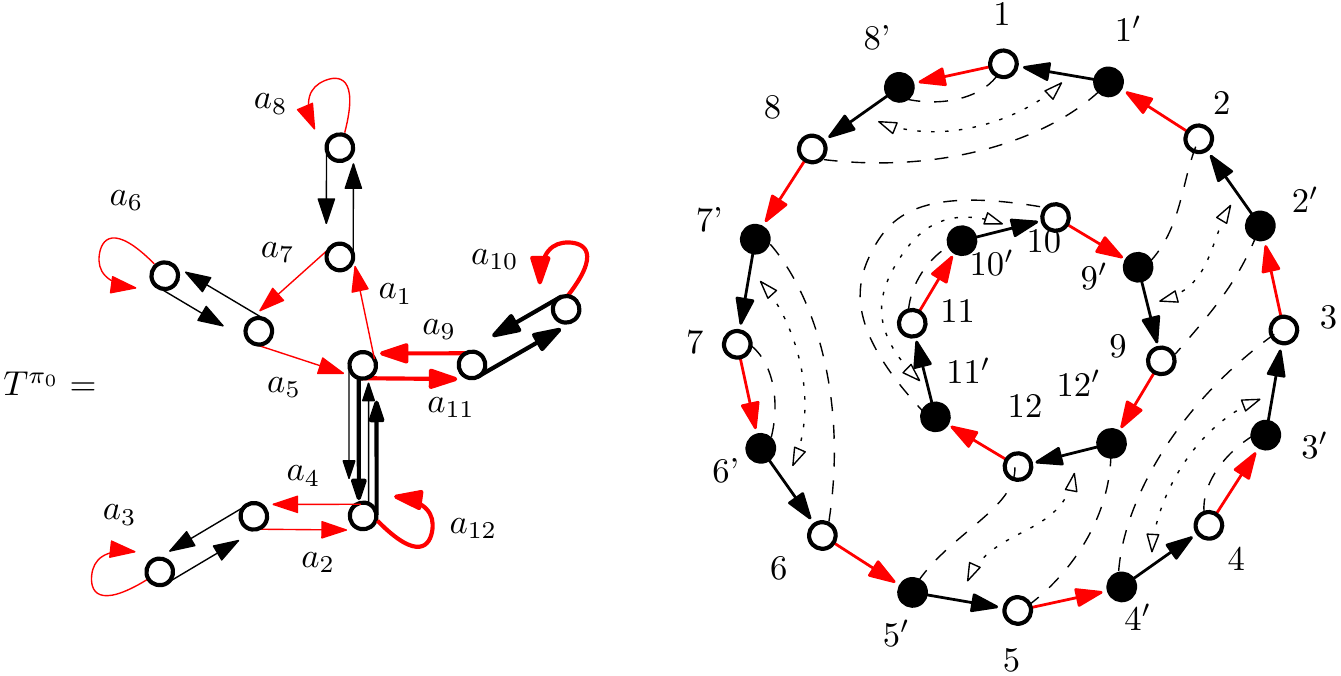}

\caption{Left: The graph $T^{\pi_0}$ for the minimal partition $\pi_0$ such that $G_X(\pi_0) = G^\pi_X$ where $\pi$ is as in Figure \ref{fig_16_42.pdf}. Edges of $T_1$ (resp. $T_2$) are the thin (resp. thick) ones. Right: the annulus $\mcal Ann^{opp}$, with dashed lines to represent to identifications of vertices made by $\pi_0$, and dotted lines with arrows to represent the pairing $\sigma_{G^\pi_X}$.}
\label{fig_17_422.pdf}
\end{figure}

For any $\pi$ such that $G^\pi_X = G$, the graph $K(G) = G^\pi_A$ depends only on $G$. Since $G$ is a fat tree (forgetting the multiplicity of edges yields a tree), it has one edge of multiplicity 4 and the other edges are of multiplicity two, the graph $K(G)$ consists in a union of cycles $C_1\etc C_k$ and of two subgraphs $C_{k+1}$ and $C_{k+2}$ (adjacent to the edge of multiplicity 4 in the minimal graph $T^{\pi_0}$). Each of these subgraphs $C_{k+1}$ and $C_{k+2}$ consists of two simple directed cycles identified by one vertex. The partitions $\pi'$ such that $G_A^{\pi'}=G^\pi_A$ are in correspondence with the $k$-tuples $(\pi_1\etc \pi_{k+2})$ where $\pi_i$ is a partition of the vertex set of $C_i$. Hence we get
	\eq
		\sum_{ \substack{\pi \in  \mcal {FT} \\ \mrm{s.t. \ } G^\pi_X=G} }  \prod_{ C \in C_A} \frac 1 N \Tr^0[C] =  \prod_{i=1}^{k+2} \frac 1 N \Tr[C_i].
	\qe
We  associate to  $G \in \mcal G^{FT}$ a partition $\sigma_G \in NC_2^{(2)}(m,n)$.

\begin{definition}\label{NC2} With $\mcal Ann^{opp}$ as in Definition \ref{NC1}, for any $G\in  \mcal G^{FT}$, we set $\sigma_G$ the pair pairing of the edge set of $\mcal Ann^{opp}$ labeled by Wigner matrices such that two edges $e,e'$ belong to a same block if and only if 
\begin{itemize}
	\item they are twins in $G$,
	\item and moreover, if $e$ and $e'$ belong to the group of edge of multiplicity 4, then $e$ and $e'$ belong to the different cycles $T_1$ and $T_2$ and have opposite orientation.
\end{itemize}
\end{definition}

The same arguments as before show that $\sigma_G$ for $G \in \mcal G^{FT}$  is a non-mixing non-crossing annulus pairing with two through strings, and the map $G \mapsto \sigma_G$ is a bijection. The two ways to connect the through strings illustrated in Figure 2 represent the two ways to form a 4-2 tree by identifying a double edge from two double trees. Denoting as before by $C_1\etc C_{k+2}$ the connected components of $K(G)$, we have
	$$ \prod_{i=1}^{k+2} \frac 1 N \Tr[C_i] \limN \tilde \phi_{ K(\sigma_G)}(a_1 \etc a_{m+n}),$$
where $\tilde \phi_{ K(\sigma)}$ is as in the statement of Theorem \ref{MainTh2}. The Hadamard products in the definition of $\tilde \phi$ result from the components $C_{k+1}$ and $C_{k+2}$, point (2) from Proposition \ref{Prop:AsypGauss} (see also Figure 1). 
We hence get
\eq
		\lefteqn{ \sum_{  \pi \in \mcal {FT}}\big( \E[|x^{(\pi)}_{12}|^4]-1\big)\prod_{ C \in C_A} \frac 1 N \Tr^0[C]}\\
		& \limN& \sum_{\substack{ \sigma NC_{ 2 }^{(2)}(m,n) \\ \mrm{non-mixing}}} \big( \E[|x^{(\sigma)}_{12}|^4]-1\big) \tilde \phi_{ K(\sigma_G)}(a_1 \etc a_{m+n}),
\qe
where non-mixing means also that the 2 through strings are labeled by the same Wigner matrix  $X^{(\sigma)}$. We emphasize that $\big( \E[|x^{(\sigma)}_{12}|^4]-1\big) $ is not the weight expected in Theorem \ref{MainTh2}.

\subsubsection{Opposite type, second case} 

Let $G \in \mcal G^{opp}_2$, namely $G=G^\pi_X$ for $T^\pi$ of double unicyclic type such that $\mcal {GDC}(T^\pi)$ has exactly 2 double edges labeled by a Wigner matrix on its double cycle. The graph $G$ is degenerated: as we contract the $A$-edges, the double cycle in $\mcal {GDC}(T^\pi)$ becomes an edge of multiplicity 4 in $G$. Hence $G$ is a 4-2 tree, namely a fat tree whose edges are all of multiplicity 2 but one is of multiplicity 4.

\begin{figure}[t]
\includegraphics{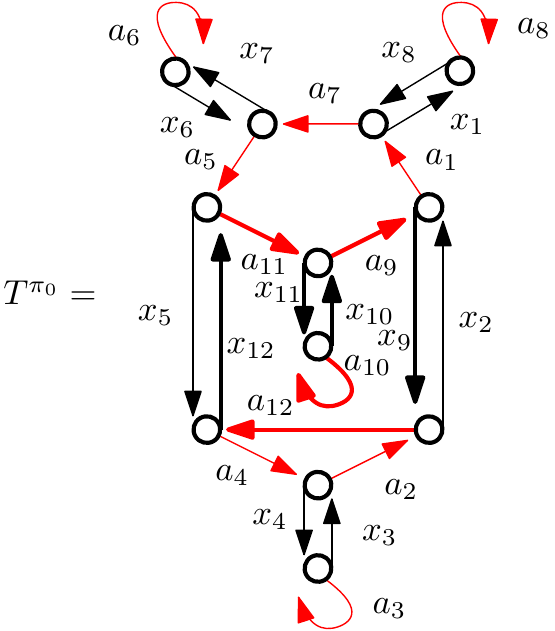}

\caption{A labeled graph $T^\pi$ such that $\pi \in   \mcal {DU}^{opp}_2$. The graph $G^\pi_X$ is the same as the rightmost picture in Figure \ref{fig_16_42.pdf}.} 
\label{fig_18_OppPetit.pdf}
\end{figure}

Nevertheless,  as for $G \in \mcal G^{opp}_{2\ell}$ for $\ell\geq 2$, it remains true that $G^\pi_A$ is a union of simple cycles $C_1\etc C_k$. In this case yet there are partitions $\pi'$ that factorize as disjoint partitions $\pi'_1\etc \pi'_k$ of the different cycles $C_i$'s, but for which $G_A^{\pi'} \neq G^\pi_A$: they are the partitions $\pi'$ such that the two double edges of the double cycle are identified to form a group of edges of multiplicity 4. Hence they are the partitions $\pi' \in \mcal {FT}$ such that $G=G_X^{\pi'}$. The graph $G_A^{\pi'}$ is a union of $k-2$ cycles $\tilde C_1\etc \tilde C_{k-2}$, where two graphs $\tilde C_i$ are obtained by identifying a vertex of cycle $C_p$ and a vertex of a cycle $C_q$. 

We hence have
	\eq
				\lefteqn{\sum_{ \substack{\pi \in  \mcal {DU}^{opp}_{2} \\ \mrm{s.t. \ } G^\pi_X=G} } \prod_{ C \in C_A(\pi)} \frac 1 N \Tr^0[C]  }\\
				&=& \prod_{i=1}^k \sum_{\pi_i\in \mcal P(V_{C_i})}   \frac 1 N \Tr^0[C_i^\pi] - \sum_{ \substack{\pi \in  \mcal {FT}\\ \mrm{s.t. \ } G^\pi_X=G} } \prod_{ C \in C_A(\pi)} \frac 1 N \Tr^0[C] \\
		&=&  \prod_{i=1}^k \frac 1 N \Tr[C_i]-\prod_{i=1}^{k-2} \frac 1 N \Tr[\tilde C_i]\\
		& \limN & \phi_{ K(\sigma)}(a_1 \etc a_{m+n}) - \tilde \phi_{ K(\sigma)}(a_1 \etc a_{m+n}).
	\qe
As a conclusion, we have proved for $m$ and $n$ even and assuming null pseudo-variance for Wigner matrices, by \eqref{MainAsymptoticFormula}
	\eq
		\tau^{(2)}_{con} & \limN & \sum_{\substack{\sigma \in   NC_{2}(m,n)\\  \text{non-mixing} 
}} \phi(a_1\etc a_{m+n}) \\
		& & \qquad+ \sum_{\substack{\sigma \in NC_{2}^{(2)}(m,n)\\
 {non-mixing} 
}}  \big( \E[|x^{(\sigma)}_{12}|^4]-2\big)  \tilde \phi_{ K(\sigma)}(a_1 \etc a_{m+n}).
	\qe
This is now indeed the expected formula \eqref{eq:thm2} in Theorem \ref{MainTh2}.

\subsection{Computation of the covariance: even moments,  general pseudo-variance}

We now have by \eqref{MainAsymptoticFormula}, with $C_A=C_A(\pi)$,
	\eq
		\tau^{(2)}_{con} &=& \sum_{ \pi \in \mcal {DU}^{opp}} \prod_{ C \in C_A} \frac 1 N \Tr^0[C] +  \sum_{ \pi \in \mcal {FT}}\big( \E[|x^{(\pi)}_{12}|^4]-1 \big) \prod_{ C \in C_A} \frac 1 N \Tr^0[C]\\
		& &  +  \sum_{ \pi \in \mcal {DU}^{par}} \theta(\pi) \prod_{ C \in C_A} \frac 1 N \Tr^0[C]+o(1),
	\qe
where $\theta(\pi)$ is defined in \eqref{Eq:W3}. As before, for any $\ell\geq 1$, we denote by $ \mcal {DU}^{par}_{2\ell}$ the set of partitions $\pi \in \mcal {DU}^{par}$ with $2\ell$ double edges by Wigner matrices in the double cycle, and by $\mcal G^{par}_{2\ell}$ the set of graphs $G\in \mcal G$ of the form $G=G^\pi_X$ for some $\pi \in  \mcal {DU}^{par}_{2\ell}$. 

Assume $G \in \mcal G^{par}_{2\ell}$ for $\ell\geq 2$. As for elements of $\mcal G^{opp}_{2\ell}$, all partitions $\pi$ such that $G^\pi_X=G$ have same graph $K(G):=G^\pi_A$, which consists of a union of simple oriented cycles $C_1\etc C_k$, and such partitions $\pi$ are in correspondence with the $k$-tuples of partitions $(\pi_1\etc \pi_k)$ where $\pi_i$ is a partition of the vertex set of $C_i$. Moreover, for any $\pi$ such that $G^\pi_X=G$, $\theta(\pi)$ is completely determined by $G$ and shall be denoted $\theta(G)$. We hence get, as for the opposite case,
	\eq
		\sum_{ \substack{\pi \in  \mcal {DU}^{par}_{2\ell} \\ \mrm{s.t. \ } G^\pi_X=G} }\theta(\pi) \prod_{ C \in C_A} \frac 1 N \Tr^0[C]  &=& \theta(G)\prod_{i=1}^k \sum_{\pi_i\in \mcal P(V_{C_i})}   \frac 1 N \Tr^0[C_i^\pi]\\
		&=&  \theta(G)\prod_{i=1}^k \frac 1 N \Tr[C_i].
	\qe

\begin{definition}\label{NC1par}We denote by $\mcal Ann^{par}$ the labeled graph consisting of the annulus formed by the outer cycle $T_1$ and the inner cycle $T_2$, both in in anticlockwise orientation. For any $G\in  \mcal G^{par}$, we set $\sigma_G$ the partition of the edge set of $\mcal Ann^{par}$ labeled by Wigner matrices such that two edges belong to a same block if and only if they are twined in $G$.
\end{definition}

As before, $\sigma_G$ is a non crossing  pairing  which completely determines $G$. We denote $\theta_\sigma = \theta(G)$ and get
	\eq
		\lefteqn{\sum_{ G \in  \mcal G^{par}_{2\ell}} \sum_{ \substack{\pi \in  \mcal {DU}^{par}_{2\ell} \\ \mrm{s.t. \ } G^\pi_X=G} } \theta(G)\prod_{ C \in C_A} \frac 1 N \Tr^0[C]}\\
		& \limN& \sum_{\substack{\sigma \in  NC_{2}^{(2\ell)}(m,n) \\
  \text{non-mixing}}}
\theta_\sigma \phi_{ K(\sigma)}(a_1 \etc a_{m}, a_{m+n}^t \etc a_{m+1}^t).
	\qe

Assume now that $G \in \mcal G^{par}_{2}$. All $\pi$ such that $G^\pi_X = G$ have same graph $K(G) := G^\pi_A$ which consists in a union of cycles $C_1\etc C_k$. There are partitions $\pi'$ such that the two double edges of the double cycle are identified to form a group of edges of multiplicity 4, for which $G_A^{\pi'}$ is a union of $k-2$ cycles $\tilde C_1\etc \tilde C_{k-2}$, where two graphs $\tilde C_i$ are obtained by identifying a vertex of cycle $C_p$ and a vertex of a cycle $C_q$. As in the opposite case, we have
	\eq
		\lefteqn{\sum_{ \substack{\pi \in  \mcal {DU}^{par}_{2} \\ \mrm{s.t. \ } G^\pi_X=G} }\theta(\pi) \prod_{ C \in C_A} \frac 1 N \Tr^0[C]}\\
		 &=& \prod_{i=1}^k \sum_{\pi_i\in \mcal P(V_{C_i})}   \frac 1 N \Tr^0[C_i^\pi]\\
		&=&  \theta(G)\big( \prod_{i=1}^k \frac 1 N \Tr[C_i] - \prod_{i=1}^{k-2} \frac 1 N \Tr[\tilde C_i] \big)\\
		& \limN &  \theta_\sigma\phi_{ K(\sigma)}(a_1 \etc a_{m}, a_{m+n}^t \etc a_{m+1}^t) -  \theta_\sigma\tilde \phi_{ K(\sigma)}(a_1 \etc a_{m+n}).
	\qe
Finally, we obtain
	\eq
		\tau^{(2)}_{con} &\limN & \sum_{\substack{\sigma \in   NC_{2}(m,n)\\
  \text{non-mixing}}} \phi_{{  K(\sigma)}}(a_1\etc a_{m+n}) \\
		& & \qquad+ \sum_{\substack{\sigma \in   NC^{(2)}_{2}(m,n)\\
 \text{non-mixing}}}   \big( \E[|x^{(\sigma)}_{12}|^4]-2 - \theta_\sigma\big)  \tilde \phi_{ K(\sigma)}(a_1 \etc a_{m+n})\\
		& & \qquad+ \sum_{\substack{\sigma \in   NC_{2}(m,n)\\
  \text{non-mixing}}} \theta_\sigma (\phi_{{  (t)}})_{{  K(\sigma)}}(a_1\etc a_{m+n}).
	\qe
  This is the result as claimed in \eqref{eq:formula-phi2} in Theorem \ref{MainTh4}. Note that 
$$ \big( \E[|x^{(\sigma)}_{12}|^4]-2 - \theta_\sigma\big)=k_4$$ 
and that the last term in \eqref{eq:formula-phi2} is not showing up in the present case, where $m$ and $n$ are both even.

\subsection{Computation of the covariance: odd moments}
We hereafter assume that $m$ and $n$ are odd. Hence, by a parity argument, there is no partition $\pi$ of $V$ such that $T^\pi$ is of 4-2 type, and when $T^\pi$ is of double unicyclic type, then the number of edges labeled by Wigner matrices in the double cycle of $\mcal {GDC}(T^\pi)$ is odd.

\begin{figure}[t]
\includegraphics{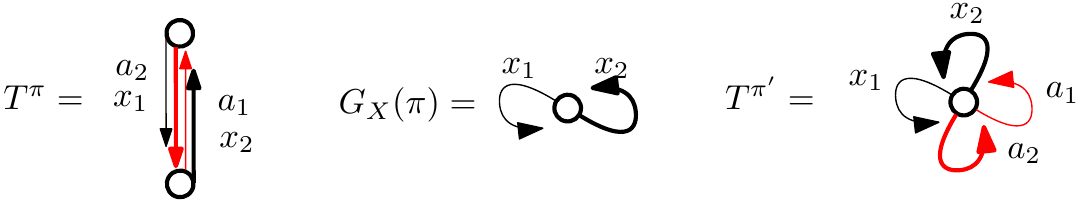}

\caption{Left: A labeled graph $T^\pi$ for $\pi \in   \mcal {DU}^{opp}_{1,+}$. Middle: the graph $G^\pi_X$. Right a labeled graph $T^{\pi'}$ for $\pi' \in   \mcal {DU}^{opp}_{1,-}$ such that $G^\pi_X = G_X^{\pi'}$.} 
\label{fig_19_OppPetit.pdf}
\end{figure}

We denote by $\mcal T^{(1)}$ the set of partitions $\pi$ of $V$ such that the double cycle of $\mcal {GDC}(T^\pi)$ is of length 1, i.e. it is a self-loop labeled by a Wigner matrix. In this situation,
	\eq
		\omega(\pi) = \E[ {x^{(\pi)}_{11}}^2],
	\qe
where $X^{(\pi)}=\big(\frac{x_{ij}^{(\pi)}}{\sqrt N}\big)$ denotes the Wigner matrix associated to the through string. If $\pi \in \mcal {DU}^{opp}\setminus \mcal T^{(1)}$, then $\omega(\pi) = 1$ and if $\pi \in \mcal {DU}^{par}\setminus \mcal T^{(1)}$ then $\omega(\pi) = \theta(\pi)$ defined in \eqref{Eq:W3} as before.

Let $G \in \mcal G^{par}_{2\ell+1} \cup \mcal G^{opp}_{2\ell+1}$ for $\ell\geq 1$. There is no modification of the reasoning, compare to the even moments case, and we get 
	\eq
		\lefteqn{\sum_{ G \in  \mcal G^{opp}_{2\ell+1}} \sum_{ \substack{\pi \in  \mcal {DU}^{opp}_{2\ell+1} \\ \mrm{s.t. \ } G^\pi_X=G} }\prod_{ C \in C_A} \frac 1 N \Tr^0[C]}\\
		& \limN& \sum_{ \sigma \in  NC^{(2\ell+1)}(m,n) }  \phi_{ K(\sigma)}(a_1 \etc a_{m+n})\\
	\qe
	\eq
		\lefteqn{\sum_{ G \in  \mcal G^{par}_{2\ell+1}} \sum_{ \substack{\pi \in  \mcal {DU}^{par}_{2\ell+1} \\ \mrm{s.t. \ } G^\pi_X=G} } \theta(G)\prod_{ C \in C_A} \frac 1 N \Tr^0[C]}\\
		& \limN& \sum_{ \sigma \in  NC^{(2\ell+1)}(m,n) } \theta_\sigma \phi_{ K(\sigma)}(a_1 \etc a_{m}, a_{m+n}^t \etc a_{m+1}^t).
	\qe

We denote by $\mcal {DU}^{opp}_{1,+}$ the set of $\pi \in  \mcal {DU}^{opp}_1$ (opposite type partitions with a single Wigner matrix on the double cycle) but the double cycle is not of length one, and $\mcal {DU}^{opp}_{1,-} = \mcal {DU}^{opp}_{1} \setminus \mcal {DU}^{opp}_{1,+}$. For $\pi \in \mcal {DU}^{opp}_{1,+}$, the graph $G^\pi_X$ is degenerated in the sense that it as a double self-loop. Given $G \in \mcal G^{opp}_1$, all graphs $\pi \in \mcal {DU}^{opp}_{1,+}$ such that $G^\pi_X = G$ have same graph $G^\pi_A$, which is a disjoint union of simple cycles $C_1\etc C_k$. Similarly all graphs $\pi' \in \mcal {DU}^{opp}_{1,-}$ such that $G^\pi_X = G$ have same graph $G_A^{\pi'}$, which is a disjoint union of simple cycles $\tilde C_1\etc \tilde C_k$ where each $\tilde C_i$ is equal to $C_i$ but one $\tilde C_i$, obtained by identifying vertices of $C_i$. Hence we get

	\eq
		\tau^{(2)}_{con} & \limN & \sum_{\substack{\sigma \in   NC_{2}(m,n)\\
  \text{non-mixing}}} \phi_{K(\sigma)}(a_1\etc a_{m+n}) \\
		& & + \sum_{\substack{\sigma \in   NC_{2}(m,n)\\
  \text{non-mixing}}}\theta_\sigma (\phi_{(t)})_{K(\sigma)}(a_1\etc a_{m+n}) \\
		& & + \sum_{\substack{\sigma \in   NC_2^{(1)}(m,n)\\
  \text{non-mixing}
}} (\eta_\sigma -1 - \theta_\sigma)  \tilde \phi_{K(\sigma)}(a_1\etc a_{m+n}).
	\qe

 This is the formula \eqref{eq:formula-phi2} as claimed in Theorem \ref{MainTh4}, if we take also into account that the term from \eqref{eq:formula-phi2} involving $k_4$ is not showing up in the present case where both $m$ and $n$ are odd.

This finishes the proof of both Theorem \ref{MainTh2} and Theorem \ref{MainTh4}.

\newpage
	
\thebottomline

\listcomments

 \end{document}